\newcommand\cyr{%
\renewcommand\rmdefault{wncyr}%
\renewcommand\sfdefault{wncyss}%
\renewcommand\encodingdefault{OT2}%
\normalfont
\selectfont}
\DeclareTextFontCommand{\textcyr}{\cyr}
\newtheorem{thm}{Theorem}[section]
\newtheorem{lem}[thm]{Lemma}
\newtheorem{prop}[thm]{Proposition}
\newtheorem{cor}[thm]{Corollary}
\newtheorem{NN}[thm]{}
\theoremstyle{definition}\newtheorem{df}[thm]{Definition}
\theoremstyle{definition}\newtheorem{rem}[thm]{Remark}
\theoremstyle{definition}\newtheorem{exm}[thm]{Example}
\renewcommand{\phi}{\varphi}
\newcommand{\N}{\mathbb{N}}
\newcommand{\Z}{\mathbb{Z}}
\newcommand{\Q}{\mathbb{Q}}
\newcommand{\R}{\mathbb{R}}
\newcommand{\T}{\mathbb{T}}
\newcommand{\hm}{homomorphism}
\newcommand{\dt}{\delta}
\newcommand{\ep}{\epsilon}
\newcommand{\andeqn}{\,\,\,{\rm and}\,\,\,}
\newcommand{\rforal}{\,\,\,{\rm for\,\,\,all}\,\,\,}
\newcommand{\CA}{$C^*$-algebra}
\newcommand{\SCA}{$C^*$-subalgebra}
\newcommand{\af}{{\alpha}}
\newcommand{\bt}{{\beta}}
\newcommand{\beq}{\begin{eqnarray}}
\newcommand{\eneq}{\end{eqnarray}}
\newcommand{\tforal}{\,\,\,\text{for\,\,\,all}\,\,\,}
\newcommand{\fp}{{\mathfrak{p}}}
\newcommand{\fq}{{\mathfrak{q}}}
\title{The Range of a Class of Classifiable Separable Simple Amenable
\CA s
}
\author{Huaxin Lin and Zhuang Niu
 }
\date{}
\begin{document}

\maketitle

\begin{abstract}

We study the range of a classifiable class ${\cal A}$ of unital
separable simple amenable \CA s which satisfy the Universal
Coefficient Theorem.  The class ${\cal A}$ contains all unital
simple AH-algebras. We show that all unital simple inductive limits
of dimension drop circle \CA s are also in the class. This unifies
some of the previous known classification results for unital simple
amenable \CA s. We also show that there are many other \CA s in the
class. We prove  that, for any partially ordered, simple weakly
unperforated rationally Riesz group $G_0$ with order unit $u,$ any
countable abelian group $G_1,$ any metrizable Choquet simplex $S,$
and any surjective affine continuous map  $r: S\to S_u(G_0)$ (where
$S_u(G_0)$ is the state space of $G_0$) which preserves extremal
points, there exists one and only one (up to isomorphism) unital
separable simple amenable \CA\, $A$ in the classifiable class ${\cal
A}$ such that
$$
((K_0(A), K_0(A)_+, [1_A]), K_1(A), T(A), \lambda_A)=((G_0, (G_0)_+, u),
G_1,S, r).
$$

\end{abstract}

\section{Introduction}

Recent years saw some rapid development in the theory of
classification of amenable \CA s, or otherwise know as the Elliott
program of classification of amenable \CA s. One of the high lights
is the Kirchberg-Phillips's  classification of separable purely
infinite simple amenable \CA s which satisfy the Universal
Coefficient Theorem (see \cite{Ph1} and  \cite{KP0}). There are also
exciting results in the simple \CA s of stable rank one. For
example, the classification of unital simple AH-algebra with no
dimension growth by Elliott, Gong and Li (\cite{EGL-AH}). Limitation
of the classification have been also discovered (see
\cite{Ror-infproj} and \cite{Toms-Ann}, for example).  In
particular, it is now known that the general class of unital simple
AH-algebras can not be classified by the traditional Elliott
invariant. One crucial condition must be assumed for any general
classification (using the Elliott invariant) of separable simple
amenable \CA s is the ${\cal Z}$-stability. On the other hand,
classification theorems were established for unital separable simple
amenable \CA s which are not assumed to be AH-algebras, or other
inductive limit structures (see \cite{Lnduke}, \cite{LinTAI} and
\cite{Niu-thesis}). Winter's recent result provided a new approach to some
more general classification theorem (\cite{Winter-Z} and
\cite{Lin-App}). Let ${\cal A}$ be the class of unital separable
simple amenable \CA s $A$ which satisfy the UCT for which $A\otimes
M_\fp$ has tracial rank no more than one for some supernatural
number $\fp$ of infinite type. A more recent work in \cite{Lnclasn}
shows that \CA s in ${\cal A}$ can be classified by the Elliott
invariant up to ${\cal Z}$-stable isomorphism.  All unital simple
AH-algebras are in ${\cal A}.$ One consequence of this is now we
know that classifiable class of unital simple AH-algebras is exact
the class of those of ${\cal Z}$-stable ones. But class ${\cal A}$
contains more unital simple \CA s. Any unital separable simple
ASH-algebra $A$ whose state space $S(K_0(A))$ of its $K_0(A)$ is the
same as that tracial state space are in ${\cal A}.$ It also contains
the Jiang-Su algebra ${\cal Z}$ and many other projectionless simple
\CA s.  We show that the class contains all unital simple so-called
dimension drop circle algebras as well as many other \CA s whose
$K_0$-groups are not Riesz. It is  the purpose of this paper to
discuss the range of invariant of \CA s in ${\cal A}.$

{\bf Acknowledgement:} The work of the first named author is
partially supported by a grant of NSF, Shanghai Priority Academic
Disciplines and Chang-Jiang Professorship from East China Normal
University during the summer 2008. The work of the second named author is supported by an NSERC Postdoctoral Fellowship.

\section{Preliminaries}

\begin{df}[Dimension drop interval algebras \cite{JS-Z}] A dimension drop interval algebra is a \CA\, of the form:
$$
\mathbf{I}(m_0, m,m_1)=\{f\in \mathrm{C}([0,1], M_m): f(0)\in M_{m_0}\otimes 1_{m/m_0}\andeqn
f(1)\in 1_{m/m_1}\otimes M_{m_1}\},
$$
where $m_0,m_1$ and $m$ are positive integers with $m$ is divisible
by $m_0$ and $m_1.$ If $m_0$ and $m_1$ are relatively prime, and
$m=m_0m_1,$ then $\mathbf{I}(m_0, m,m_1)$ is called a prime
dimension drop algebra.
\end{df}

\begin{df}[The Jiang-Su algebra \cite{JS-Z}]
Denote by ${\cal Z}$ the Jiang-Su algebra of unital infinite dimensional simple
\CA\, which is an inductive limit of prime dimension drop algebras with a unique tracial state,
$(K_0({\cal Z}), K_0({\cal Z})_+, [1_{\cal Z}])=(\Z, \N, 1)$ and $K_1({\cal Z})=0.$

\begin{df}[Dimension drop circle algebras \cite{Myg-CJM}] Let $n$ be a natural number. Let $x_1, ..., x_N$ be points in the circle $\mathbb T$, and let $d_1, ..., d_N$ be natural numbers dividing $n$. Then a dimension drop circle algebra is a \CA\ of the form
$$
D(n, d_1 ,..., d_N)=\{f\in \mathrm{C}(\mathbb T, \mathrm{M}_N):\ f(x_i)\in M_{d_i}\otimes 1_{n/d_i}, i=1,2,...,N\}.
$$
\end{df}

\end{df}
\begin{df}[Simple A$\T$D-algebras]
By A$\T$D-algebras we mean \CA s which are inductive limits of dimension drop circle algebras.
\end{df}

\begin{df}Denote by ${\cal I}$ the class of those \CA s with the
form $\bigoplus_{i=1}^n M_{r_i}(C(X_i)),$ where each $X_i$ is a finite
CW complex with (covering) dimension no more than one.

A unital simple \CA\, $A$ is said to have tracial rank one if
for any finite subset $\mathcal F\subset A$, $\ep>0$, any nonzero
positive element $a\in A$, there is a \SCA\,  $C\in\mathcal I$ such
that if denote by $p$ the unit of $C$, then for any $x\in\mathcal
F$, one has
\begin{enumerate}
\item $||xp-px||\leq\ep,$
\item there is $b\in C$ such that $||b-pxp||\leq \ep$  and
\item $1-p$ is Murray-von Neumann equivalent to a projection in $\overline{aAa}$.
\end{enumerate}
\end{df}

Denote by ${\cal I}'$ the class of all unital \CA s with the form
$\bigoplus_{i=1}^n M_{r_i}(C(X_i)),$ where each $X_i$ is a compact
metric space with dimension no more than one.

 Note that, in the above
definition, one may replace ${\cal I}$ by ${\cal I}'.$

\begin{df}[A classifiable class of unital separable simple amenable \CA s]

Denote by ${\cal N}$ the class of all unital separable amenable
\CA s which satisfy the Universal Coefficient Theorem.

For a supernatural number $\mathfrak p$, denote by $M_\fp$ the UHF
algebra associated with $\fp$ (see \cite{Dix}).

Let ${\cal A}$ denote the class of all unital separable simple amenable
\CA s $A$ in ${\cal N}$ for which $TR(A\otimes M_{\mathfrak{p}})\le
1$ for all  supernatural numbers ${\mathfrak{p}}$ of infinite type.
\end{df}

\begin{rem}
By Theorem \ref{T0} below, in order to verify whether a C*-algebra $A$ is in the class $\mathcal A$, it is enough to verify $TR(A\otimes M_{\mathfrak{p}})\leq1$ for one supernatural number $\mathfrak p$ of infinite type.
\end{rem}

\begin{df}\label{Inf}

{\rm Let $G$ be a partially ordered group with an order unit $u\in
G.$ Denote by $S_u(G)$ the state space of $G,$ i.e., $S_u(G)$ is the
set of all positive \hm s $h: G\to \R$ such that $h(u)=1.$ The set
$S_u(G)$ equipped with the weak-*-topology forms a compact convex
set. Denote by $\mathrm{Aff}(S_u(G))$ the space of all continuous
real affine functions on $S_u(G).$ We use $\rho$ for the \hm\,
$\rho: G\to \mathrm{Aff}(S_u(G))$ defined by
$$
\rho(g)(s)=s(g)\tforal s\in S_u(G)\andeqn \tforal g\in G.
$$
Put $\mathrm{Inf}(G)={\rm ker}\rho.$

}
\end{df}
\begin{df}
Let $A$ be a unital stably finite separable simple amenable \CA.
Denote by $T(A)$ the tracial state space of $A.$ We also use $\tau$
for $\tau\otimes {\rm Tr}$ on $A\otimes M_k$ for any integer $k\ge
1,$ where $Tr$ is the standard trace on $M_k.$

By $\mathrm{Ell}(A)$ we mean the following:
$$
(K_0(A), K_0(A)_+, [1_A], K_1(A), T(A), r_A),
$$
where $r_A: T(A)\to S_{[1_A]}(K_0(A))$ is a surjective continuous
affine map such that $r_A(\tau)([p])=\tau(p)$ for all projections
$p\in A\otimes M_k,$ $k=1,2,....$

Suppose that $B$ is another stably finite unital separable simple
\CA. A map $\Lambda: \mathrm{Ell}(A)\to \mathrm{Ell}(B)$ is said to
be a \hm\, if $\Lambda$ gives an order \hm\, $\lambda_0: K_0(A)\to
K_0(B)$ such that $\lambda_0([1_A])=[1_B],$ a \hm\, $\lambda_1:
K_1(A)\to K_1(B),$ a continuous affine map $\lambda_{\rho}': T(B)\to
T(A)$ such that
$$
\lambda_{\rho}'(\tau)(p)=r_B(\tau)(\lambda_0([p]))
$$
for all projection in $A\otimes M_k,$ $k=1,2,...,$ and for all
$\tau\in T(B).$

We say that such $\Lambda$ is an isomorphism, if $\lambda_0$ and
$\lambda_1$ are isomorphisms and $\lambda_\rho'$ is a affine
homeomorphism. In this case, there is an affine homeomorphism
$\lambda_\rho: T(A)\to T(B)$ such that
$\lambda_\rho^{-1}=\lambda_\rho'.$

\end{df}

\begin{thm}[Corollary 11.9 of \cite{Lnclasn}]
Let $A,\, B\in {\cal A}.$ Then
$$
A\otimes {\cal Z}\cong B\otimes {\cal Z}
$$
if
$$
\mathrm{Ell}(A\otimes {\cal Z})=\mathrm{Ell}(B\otimes {\cal Z}).
$$
\end{thm}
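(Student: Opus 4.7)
The plan is to combine Lin's classification theorem for unital simple separable amenable C*-algebras of tracial rank at most one (satisfying the UCT) with a Winter-style gluing argument over the Jiang-Su algebra. The main idea is to use two "mutually prime" UHF algebras to localize the problem to settings where tracial rank classification directly applies, and then to patch the resulting isomorphisms together.

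First I would fix two supernatural numbers $\fp$ and $\fq$ of infinite type that are relatively prime, so that $M_\fp\otimes M_\fq\cong Q$, the universal UHF algebra. Since $A,B\in{\cal A}$, both $A\otimes M_\fp$ and $B\otimes M_\fp$ have tracial rank at most one, satisfy the UCT, and (because $\mathcal Z\otimes M_\fp\cong M_\fp$) their Elliott invariants are computable from $\mathrm{Ell}(A\otimes\mathcal Z)$ and $\mathrm{Ell}(B\otimes\mathcal Z)$. The hypothesis $\mathrm{Ell}(A\otimes\mathcal Z)=\mathrm{Ell}(B\otimes\mathcal Z)$ therefore yields an identification $\mathrm{Ell}(A\otimes M_\fp)=\mathrm{Ell}(B\otimes M_\fp)$, and symmetrically with $\fq$ replacing $\fp$. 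Lin's classification of simple \CA s of tracial rank at most one then produces isomorphisms
\[
\phi_\fp:A\otimes M_\fp\longrightarrow B\otimes M_\fp\andeqn \phi_\fq:A\otimes M_\fq\longrightarrow B\otimes M_\fq
\]
realizing the given identifications at the level of the Elliott invariant.

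Next I would realize $A\otimes\mathcal Z$ as an inductive limit (or a pullback description) built from the building blocks $A\otimes M_\fp$ and $A\otimes M_\fq$ in the spirit of Winter's construction in \cite{Winter-Z}: $\mathcal Z$ is obtained by gluing $M_\fp$ and $M_\fq$ along $Q$ through prime dimension drop algebras, and tensoring preserves this description. The same description applies to $B\otimes\mathcal Z$. The isomorphism $A\otimes\mathcal Z\cong B\otimes\mathcal Z$ is then to be assembled from $\phi_\fp$ and $\phi_\fq$ by an approximate one-sided intertwining along this pullback/limit presentation.

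The main obstacle, and the heart of the proof, is the gluing step. After tensoring with the opposite UHF algebra, both $\phi_\fp\otimes\id_{M_\fq}$ and $\phi_\fq\otimes\id_{M_\fp}$ become isomorphisms $A\otimes Q\to B\otimes Q$ that agree on the Elliott invariant. To glue them one needs a strong uniqueness theorem asserting that any two such isomorphisms are approximately unitarily equivalent (at least after absorbing $\mathcal Z$, which is automatic since $Q\cong Q\otimes\mathcal Z$). One then finds a continuous path of unitaries in $B\otimes Q$ interpolating, on large finite subsets and within $\ep$, between the two identifications, and feeds this path through the dimension drop structure of $\mathcal Z$ to manufacture the required isomorphism of $\mathcal Z$-stabilizations. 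This is where Lin's KK-theoretic uniqueness results for \CA s in $\mathcal A\otimes Q$, together with careful rotation/Bott map computations to kill the obstructions in $\mathrm{Ext}$ and $\mathrm{Hom}(K_1,\overline{\rho(K_0)})$, must do all the work; once this uniqueness/existence package is in hand, the intertwining produces the desired isomorphism $A\otimes\mathcal Z\cong B\otimes\mathcal Z$.
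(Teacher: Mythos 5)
This theorem is quoted by the paper as Corollary 11.9 of \cite{Lnclasn} and is not proved here, but your outline correctly reconstructs the strategy of that reference, which the present paper itself mirrors in Lemma \ref{ext} and Theorem \ref{extT}: classify $A\otimes M_\fp$ and $A\otimes M_\fq$ via the tracial rank one theorem, pass to $A\otimes Q$, use divisibility of $K_*(B\otimes Q)$ to upgrade agreement on the invariant to agreement in $KK$, correct by a $KK$-trivial automorphism to achieve strong asymptotic unitary equivalence, and assemble a unitarily suspended $C([0,1])$-homomorphism along ${\cal Z}_{\fp,\fq}$ followed by an approximate intertwining. Your sketch is therefore essentially the same approach; the only caveat is that it treats as black boxes precisely the steps that carry the weight (the asymptotic, rather than approximate, uniqueness theorem and the rotation-map correction), which is acceptable for a result the paper itself imports.
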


 In the next section (Theorem \ref{T1}), we will show the
 following:

\begin{thm}\label{T0}  Let $A$ be a unital separable amenable simple \CA. Then
$A\in {\cal A}$ if only if    $TR(A\otimes M_\fp)\le 1$ for one
supernatural number $\fp$ of infinite type
\end{thm}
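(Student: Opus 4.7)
The ``only if'' direction is immediate from the definition of $\mathcal{A}$.  For the ``if'' direction, suppose $TR(A\otimes M_\fp)\le 1$ for some supernatural $\fp$ of infinite type; I must show $TR(A\otimes M_\fq)\le 1$ for every supernatural $\fq$ of infinite type.  Fix such a $\fq$, and choose $\fq'$ complementary to $\fq$ (so $M_\fq\otimes M_{\fq'}\cong Q$, the universal UHF algebra).  By the standard inheritance of tracial rank at most one under tensoring with UHF algebras in the unital simple case, $TR(A\otimes M_\fp\otimes M_\fs)\le 1$ for every supernatural $\fs$.  Choosing $\fs$ so that $M_\fp\otimes M_\fs\cong Q$ yields $TR(A\otimes Q)\le 1$, whence in particular $TR\bigl((A\otimes M_\fq)\otimes M_{\fq'}\bigr)=TR(A\otimes Q)\le 1$.

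Next, I invoke the range-of-invariants machinery developed later in this paper (building on the classification of simple AH algebras) to construct a unital simple separable amenable UCT \CA\ $C\in\mathcal{A}$ with $TR(C)\le 1$ and $\mathrm{Ell}(C)=\mathrm{Ell}(A\otimes M_\fq)$.  Since $C\in\mathcal{A}$, tensoring with $M_{\fq'}$ gives $TR(C\otimes M_{\fq'})\le 1$, and the Elliott invariants of $C\otimes M_{\fq'}$ and $(A\otimes M_\fq)\otimes M_{\fq'}$ coincide.  Lin's classification theorem for simple unital amenable UCT \CA s of tracial rank at most one then yields an isomorphism $C\otimes M_{\fq'}\cong(A\otimes M_\fq)\otimes M_{\fq'}$.

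The final step is to cancel the $M_{\fq'}$ factor.  Both $C$ and $A\otimes M_\fq$ are $\mathcal{Z}$-stable (the former because $TR(C)\le 1$, the latter because the infinite-type UHF $M_\fq$ is already $\mathcal{Z}$-stable).  A Winter-style intertwining argument, built from the dimension-drop construction of $\mathcal{Z}$ out of $M_\fq$ and $M_{\fq'}$ blocks together with Corollary~11.9 of \cite{Lnclasn}, promotes the $M_{\fq'}$-tensored isomorphism to $C\cong A\otimes M_\fq$.  Consequently $TR(A\otimes M_\fq)=TR(C)\le 1$, as required.

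The principal obstacle is this cancellation step, since tensor cancellation with a UHF factor fails in general.  It is resolved here by the combination of $\mathcal{Z}$-stability on both sides with matching Elliott invariants, and by using $C\otimes M_{\fq'}\in\mathcal{A}$ as a classifiable intermediary so as to avoid the apparent circularity of requiring $A\otimes M_\fq\in\mathcal{A}$ before the result is proved.
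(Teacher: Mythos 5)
Your opening reduction is fine: the ``only if'' direction is the definition of ${\cal A}$, and passing from $TR(A\otimes M_\fp)\le 1$ to $TR(A\otimes Q)\le 1$ is legitimate. But the rest of the argument contains two genuine gaps, and both are circular. First, the model algebra $C$: to produce a unital simple \CA\ $C$ with $TR(C)\le 1$ and $\mathrm{Ell}(C)=\mathrm{Ell}(A\otimes M_\fq)$ you must first verify that this invariant lies in the range of the invariant for tracial-rank-one algebras, in particular that $K_0(A\otimes M_\fq)$ is a simple weakly unperforated \emph{Riesz} group and that $r_{A\otimes M_\fq}$ carries extreme traces to extreme states. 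At this stage you only know $TR(A\otimes M_\fr)\le 1$ for supernatural numbers $\fr$ divisible by $\fp$; for an arbitrary $\fq$ the Riesz property of $K_0(A\otimes M_\fq)$ is exactly the kind of consequence of $TR(A\otimes M_\fq)\le 1$ that the theorem is supposed to deliver. Knowing that $K_0(A\otimes Q)$ is Riesz only makes $K_0(A\otimes M_\fq)$ \emph{rationally} Riesz in the sense of Definition \ref{dfRi}, and Section 5 of the paper exhibits rationally Riesz groups that are not Riesz; moreover the range results you invoke are proved in later sections and themselves rely on Theorem \ref{T0}. Second, the cancellation of $M_{\fq'}$: Corollary 11.9 of \cite{Lnclasn} applies only to algebras already known to lie in ${\cal A}$, and membership of $A\otimes M_\fq$ in ${\cal A}$ is (essentially) the statement being proved. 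A Winter-type intertwining along ${\cal Z}_{\fq,\fq'}$ requires compatible isomorphisms after tensoring with $M_\fq$ as well as with $M_{\fq'}$; since $M_\fq\otimes M_\fq\cong M_\fq$, the $\fq$-side already presupposes $TR(A\otimes M_\fq)\le 1$. So the ``principal obstacle'' you correctly identify is not actually overcome.

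The paper proves the statement (as Theorem \ref{T1}) by an entirely different, classification-free route. Lemma \ref{AL1} recharacterizes $TR\le 1$ for an increasing union $A=\overline{\bigcup_n A_n}$ of unital simple subalgebras in terms of ${\cal I}'$-subalgebras sitting inside the $A_m$ themselves. Writing $A\otimes M_\fp=\varinjlim A\otimes M_{r(n)}$ and $A\otimes M_\fq=\varinjlim A\otimes M_{k(n)}$, one applies this to $M_{r(n)}(A)\otimes M_\fq$ to obtain tracially large subalgebras $C_N,C_{N+1}\in{\cal I}'$ of $M_{r(n)}(A)\otimes M_{k(N)}$ and $M_{r(n)}(A)\otimes M_{k(N+1)}$, and then transplants $n_1$ copies of $C_{N+1}$ and $r_1$ copies of $C_N$ into $M_{r(n+1)}(A)$ via the division $r(n+1)/r(n)=n_1q_1^sk(N)+r_1k(N)+r_0$ of Lemma \ref{Ldiv}, absorbing the leftover corner of multiplicity $r_0$ by comparison against finitely many mutually orthogonal copies of a prescribed positive element (Lemmas \ref{AL0} and \ref{AL00}). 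No classification or range-of-invariant result is used. To salvage your approach you would need a non-circular proof that $\mathrm{Ell}(A\otimes M_\fq)$ is realized by a $TR\le 1$ algebra; the direct argument sidesteps this entirely.
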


\begin{df}
{\rm Recall that a \CA\, $A$ is said to be ${\cal Z}$-stable if
$A\otimes {\cal Z}\cong A.$ Denote by ${\cal A}_{\cal Z}$ the class
of  ${\cal Z}$-stable  \CA s in ${\cal A}.$ Denote by ${\cal
A}_{0z}$ the subclass of those \CA s $A\in {\cal A}_z$ for which
$TR(A\otimes M_\fp)=0$ for some supernatural number $\fp$ of
infinite type.

}
\end{df}

\begin{cor}\label{CT0}
Let $A$ and $B$ be two unital separable amenable simple \CA s in
${\cal A}_{\cal Z}.$  Then $A\cong B$ if and only if
$$
\mathrm{Ell}(A)\cong \mathrm{Ell}(B).
$$
\end{cor}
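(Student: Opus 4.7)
The corollary is essentially a direct consequence of the classification theorem stated immediately before it (Corollary 11.9 of \cite{Lnclasn}), combined with the definition of ${\cal A}_{\cal Z}$, so the plan is short.

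The ``only if'' direction is trivial: an isomorphism $\phi:A\to B$ of \CA s induces an isomorphism $\mathrm{Ell}(A)\cong \mathrm{Ell}(B)$ in the obvious way ($\phi_*$ on $K_0$ and $K_1$, precomposition on $T(B)$), preserving the order unit and intertwining $r_A$ with $r_B$.

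For the ``if'' direction, I would argue as follows. Assume $\mathrm{Ell}(A)\cong \mathrm{Ell}(B)$. Since $A,B\in{\cal A}_{\cal Z}\subset{\cal A}$, both $A$ and $B$ are in ${\cal A}$, and moreover they are ${\cal Z}$-stable, i.e.\ $A\otimes{\cal Z}\cong A$ and $B\otimes{\cal Z}\cong B.$ These isomorphisms induce isomorphisms
$$
\mathrm{Ell}(A\otimes{\cal Z})\cong \mathrm{Ell}(A)\cong \mathrm{Ell}(B)\cong \mathrm{Ell}(B\otimes{\cal Z}).
$$
Applying the preceding theorem (Corollary 11.9 of \cite{Lnclasn}) to $A$ and $B$ in ${\cal A}$, we get
$$
A\cong A\otimes{\cal Z}\cong B\otimes{\cal Z}\cong B,
$$
which is the desired conclusion.

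The only minor point worth mentioning is the formal gap between the statement of the cited theorem, which asserts equality $\mathrm{Ell}(A\otimes{\cal Z})=\mathrm{Ell}(B\otimes{\cal Z})$, and the notion of isomorphism $\mathrm{Ell}(A)\cong\mathrm{Ell}(B)$ used here. In the setting of the Elliott invariant this is not an actual obstacle: ``equality'' of Elliott invariants in \cite{Lnclasn} is just the existence of an isomorphism $\Lambda$ in the sense of the definition preceding the theorem, so the two notions coincide. Thus no genuine difficulty arises, and the corollary follows immediately from the ${\cal Z}$-stable classification theorem together with the hypothesis of ${\cal Z}$-stability.
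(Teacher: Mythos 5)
Your proposal is correct and is exactly the intended argument: the paper gives no explicit proof of this corollary, treating it as an immediate consequence of the preceding theorem (Corollary 11.9 of \cite{Lnclasn}) together with ${\cal Z}$-stability, which is precisely the reduction $A\cong A\otimes{\cal Z}\cong B\otimes{\cal Z}\cong B$ that you carry out. Your remark reconciling ``equality'' versus ``isomorphism'' of Elliott invariants is a reasonable clarification and does not change the substance.
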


\begin{df}\label{dflesssim}
{\rm Let $A$ be a unital simple \CA\, and let $a, b\in A_+.$ We
define
$$
a\lesssim b,
$$
if there exists $x\in A$ such that $x^*x=a$ and $xx^*\in
\overline{bAb}.$  We write $[a]=[b]$ if there exists $x\in A$ such
that $x^*x=a$ and $xx^*=b.$ We write $[a]\le [b]$ if $a\lesssim
b.$

If $e\in A$ is a projection and $[e]\le [a],$ then there is a
projection $p\in\overline{aAa}$ such that $e$ is equivalent to
$p.$

If there $n$ mutually orthogonal elements $b_1,b_2,...,b_n\in
\overline{bAb}$ such that $a\lesssim b_i,$ $i=1,2,...,n,$ then we
write
$$
n[a]\le [b].
$$
Let $m$ be  another positive integer. We write
$$
n[a]\le m[b],
$$
if, in $M_m(A),$
$$
n[a]\le [\begin{pmatrix} b & 0 & 0 \cdots  &0\\
                       0 & b & 0\cdots & 0\\
                        \vdots&\vdots& \ddots  &\vdots\\

                       0 & 0 &0 \cdots  & b\cr
                       \end{pmatrix}],
                       $$
where $b$ repeats $m$ times.

}
\end{df}

\section{The classifiable \CA s ${\cal A}$}

The  purpose of this section is to provide a proof of Theorem
\ref{T0}.

\begin{lem}\label{A1}
Let $A$ be a unital simple \CA\, with an increasing sequence of
unital simple \CA s $\{A_n\}$ such that $1_A=1_{A_n}$ and
$\cup_{n=1}^{\infty} A_n$ is dense in $A.$

Suppose that $a\in A_+\setminus\{0\}.$ Then, there exists $b\in
(A_n)_+\setminus\{0\}$ for some large $n$ so that $b\lesssim a.$
\end{lem}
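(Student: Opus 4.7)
The plan is straightforward: use norm-density of $\bigcup_n A_n$ to obtain a positive approximant $a_n \in (A_n)_+$ close to $a$, then cut down by functional calculus to produce $b := (a_n - \epsilon)_+$, which will be a nonzero element of $A_n$ that is Cuntz subequivalent to $a$.

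First I would produce positive approximants inside the subalgebras. Since $\bigcup_n A_n$ is norm-dense and each $A_n$ is a unital \SCA{} of $A$, there exist self-adjoint $y_n \in A_{k(n)}$ with $y_n \to a$. Taking positive parts $a_n := (y_n)_+ \in C^*(y_n) \subseteq A_{k(n)}$ and using the (locally uniform) continuity of the map $y \mapsto y_+$ on self-adjoint elements, one obtains $a_n \to a_+ = a$ in norm, with $a_n \in (A_{k(n)})_+$.

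After normalizing so that $\|a\| = 1$, I would fix $0 < \epsilon < 1/4$ and choose $n$ large enough that both $\|a - a_n\| < \epsilon$ and $\|a_n\| > \epsilon$; both conditions hold for all large $n$. Setting $b := (a_n - \epsilon)_+$, this is a positive element of $C^*(a_n) \subseteq A_n$, and it is nonzero because $\|b\| = \|a_n\| - \epsilon > 0$.

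The final step invokes the standard R{\o}rdam-type perturbation lemma for Cuntz subequivalence: whenever $x, y \in A_+$ satisfy $\|x - y\| < \epsilon$, one has $(y - \epsilon)_+ \lesssim x$, via a contraction $r$ built explicitly from the continuous functional calculus applied to $x$ and $y$. Applied here with $x = a$ and $y = a_n$, this yields $b = (a_n - \epsilon)_+ \lesssim a$ in the sense of Definition \ref{dflesssim}, completing the argument. There is no substantive obstacle; the only mild care required is to keep the approximants positive (handled by passing to positive parts inside $A_{k(n)}$) and to arrange $\|a_n\| > \epsilon$ so that $b$ is nonzero, both of which are automatic for $n$ sufficiently large. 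Note that neither the simplicity of $A$ nor the simplicity of the $A_n$ is actually used in this particular argument; the hypotheses merely reflect the context of later applications.
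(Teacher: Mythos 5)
Your proposal is correct and follows essentially the same route as the paper: approximate $a$ in norm by a positive element of some $A_n$, cut down by continuous functional calculus (the paper uses $f_\delta(c)$, you use $(a_n-\epsilon)_+$, which play interchangeable roles), and invoke R{\o}rdam's perturbation lemma (Proposition 2.2 of \cite{RorUHF2}) to get Cuntz subequivalence to $a$. Your treatment is if anything slightly more careful than the paper's about producing a genuinely positive approximant inside $A_n$ and about why the cut-down is nonzero.
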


\begin{proof}
Without loss of generality, we may assume that $\|a\|=1.$  Let
$1>\dt>0.$ There exists $\ep>0$ such that for any $c\in
A_+\setminus \{0\}$ with
$$
\|a-c\|<\ep
$$
one has (see Proposition 2.2 of \cite{RorUHF2}) that
$$
f_\dt(c)\lesssim a,
$$
where $f_\dt\in C_0((0, \infty))$ for which $f_\dt(t)=1$ for all
$t\ge \dt$ and $f_\dt(t)=0$ for all $t\in (0, \dt/2).$ We may
assume, for a sufficiently small $\ep,$ $f_\dt(c)\not=0.$ Since
$\cup_{n=1}^{\infty} A_n$ is dense in $A,$ it is possible to such
that $c\in A_n$ for some large $n.$ Put $b=f_\dt(c).$ Then $b\in
A_n$ and $b\lesssim a.$

\end{proof}

\begin{lem}\label{AL0} Let $A$ be a unital  simple \CA\, and $e,
a\in A_+\setminus\{0\}.$ Then, for any integer $n>0,$ there exists
$m(n)>0$ such that
$$
n[e]\le{ m(n)}[a].
$$
\end{lem}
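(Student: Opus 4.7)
The plan is to first reduce to the case $n=1$ by a block-diagonal construction, and then to establish $[e] \le k[a]$ for some $k$ via the fullness of any nonzero positive element in a simple unital $C^*$-algebra.

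For the special case, I would use that simplicity of $A$ (together with $a\neq 0$) forces the closed two-sided ideal generated by $a$ to be all of $A$. In particular there exist $x_1,\ldots,x_k\in A$ with
\[
\left\| \sum_{i=1}^{k} x_i^* a x_i - 1_A \right\| < 1/4,
\]
so that $w := \sum_{i=1}^{k} x_i^* a x_i$ is positive and invertible in $A$. Setting $y_i := a^{1/2} x_i w^{-1/2}$ gives $\sum_i y_i^* y_i = 1_A$, and then $z_i := y_i e^{1/2}$ satisfies $\sum_i z_i^* z_i = e$, while each $z_i z_j^* = a^{1/2} x_i w^{-1/2} e w^{-1/2} x_j^* a^{1/2}$ lies in $a^{1/2} A a^{1/2}$.

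Assembling $z_1,\ldots,z_k$ into the first column of a matrix $X\in M_k(A)$ (with all other entries zero), I would obtain $X^*X = \mathrm{diag}(e,0,\ldots,0)$ and $XX^*\in M_k(\overline{aAa}) = \overline{\mathrm{diag}(a,\ldots,a)\,M_k(A)\,\mathrm{diag}(a,\ldots,a)}$. In the language of Definition \ref{dflesssim} this yields $[e]\le k[a]$. To pass from this to $n[e]\le nk[a]$, I would view $M_{nk}(A)=M_n(M_k(A))$ as $n$ block-diagonal copies of $M_k(A)$, and for $i=1,\ldots,n$ define $b_i\in M_{nk}(A)$ to have $XX^*$ in the $i$-th $M_k$-block and zero elsewhere. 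These $b_i$ are pairwise orthogonal, each is Murray--von Neumann equivalent to $XX^*$ (so $e\lesssim b_i$), and each lies in $\overline{\mathrm{diag}(a,\ldots,a)\,M_{nk}(A)\,\mathrm{diag}(a,\ldots,a)}$ (with $nk$ copies of $a$). Taking $m(n):=nk$ completes the argument.

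I do not anticipate a serious obstacle: the content is the classical fact that nonzero positive elements in a simple unital $C^*$-algebra are full, and the only care needed is bookkeeping of hereditary subalgebras inside $M_{nk}(A)$ in order to match the exact definition of $n[a]\le m[b]$ given above.
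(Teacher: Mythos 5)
Your proof is correct and follows essentially the same route as the paper's: both rest on the fullness of the nonzero positive element $a$ in the simple unital algebra $A$ to produce $\sum_i y_i^*y_i=1_A$ with $y_i\in a^{1/2}A$ (the paper quotes the exact identity $\sum_i x_i^*ax_i=1_A$ from Lemma 3.3.6 of \cite{Lin-book}, while you derive it from an approximate version via the invertible element $w$), and then move $e$ into the hereditary subalgebra generated by $\mathrm{diag}(a,\dots,a)$ using the same column-matrix and block-diagonal bookkeeping. No gaps.
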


\begin{proof}
It suffices to show that $[1_A]\le m[a]$ for some integer $m.$
Since $A$ is simple, there are $y_1,y_2,...,y_m\in A$ for some
integer $m$ such that
$$
\sum_{i=1}^m y_i^*ay_i=1
$$
(see, for example, Lemma 3.3.6 of \cite{Lin-book}). Let
$b_i'=a^{1/2}y_iy_i^*a^{1/2},$ $i=1,2,...,m$ Define
$$
b_i={\rm diag}(\overbrace{0, 0,..., 0}^{i-1},
b_i,0,...,0),\,\,\,i=1,2,...,m
$$
in $M_m(A).$ Define
$$
y=\begin{pmatrix} y_1^*a^{1/2} & y_2^*a^{1/2} & \cdots & y_m^*a^{1/2}\\
                   0 & 0& \cdots & 0\\
                   \vdots &\vdots &\ddots &\vdots \\
                      0 & 0& \cdots & 0\end{pmatrix}.
                      $$
Then
$$
yy^*=1_A\andeqn y^*y=\sum_{i=1}^m b_i.
$$
Thus
$$
[1_A]\le [\sum_{i=1}^m b_i].
$$
Clearly
$$
[\sum_{i=1}^m b_i]\le m[a].
$$

\end{proof}

\begin{lem}\label{AL00} Let $A$ be a unital infinite dimensional simple
\CA\, and let $a\in A_+\setminus\{0\}.$ Suppose that $n\ge 1$ is
an integer. Then there are non-zero mutually orthogonal elements
$a_1, a_2,...,a_n\in \overline{aAa}_+$ such that
$$
a_1\lesssim a_2\lesssim \cdots \lesssim a_n.
$$
\end{lem}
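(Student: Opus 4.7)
The plan is first to produce $n$ pairwise orthogonal nonzero positive elements $c_1,\dots,c_n$ in $\overline{aAa}$, and then, working downwards from $c_n$, to replace each $c_i$ ($i<n$) by a nonzero element $a_i$ sitting inside $\overline{c_iAc_i}$ which is Cuntz-subequivalent to $a_{i+1}$. Because $\overline{c_iAc_i}\cdot\overline{c_jAc_j}=0$ whenever $i\neq j$, pairwise orthogonality of the resulting chain $a_1,\dots,a_n$ will be automatic.

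For the initial family, since $A$ is unital, simple, and infinite dimensional, the hereditary subalgebra $\overline{aAa}$ is itself simple and infinite dimensional (a unital elementary $C^*$-algebra is finite dimensional). In particular it contains a positive element $b$ with infinite spectrum, and selecting $n$ continuous bump functions $g_1,\dots,g_n$ whose supports are pairwise disjoint and each meet $\sigma(b)$, the elements $c_i:=g_i(b)\in\overline{aAa}$ are pairwise orthogonal and nonzero.

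The crux of the argument is the following connecting construction: given any two nonzero positives $x,y\in A$, there exists $w\in A$ with $w^*w\in\overline{xAx}\setminus\{0\}$ and $ww^*\in\overline{yAy}$. I would prove this by the same device used in the proof of Lemma~\ref{AL0}: pick $y_1,\dots,y_m\in A$ with $\sum_j y_j^*\,y\,y_j = 1$, set $z_j:=y^{1/2}y_j$ so that $\sum_j z_j^*z_j = 1$ and each $z_jz_j^*\in\overline{yAy}$, and put $w_j:=z_jx^{1/2}$. Then
\[
\sum_j w_j^*w_j \;=\; x^{1/2}\Big(\sum_j y_j^*yy_j\Big)x^{1/2} \;=\; x \;\neq\; 0,
\]
so some $w_j$ is nonzero; for this $j$ one has $w_j^*w_j=x^{1/2}y_j^*yy_jx^{1/2}\in\overline{xAx}$ and $w_jw_j^*=y^{1/2}y_jxy_j^*y^{1/2}\in\overline{yAy}$, and $w_j$ itself witnesses $w_j^*w_j\lesssim y$ in the sense of Definition~\ref{dflesssim}.

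To conclude, set $a_n:=c_n$ and, for $i=n,n-1,\dots,2$, apply the connecting construction to the pair $(c_{i-1},a_i)$ to produce some $w\in A$ with $a_{i-1}:=w^*w\in\overline{c_{i-1}Ac_{i-1}}\setminus\{0\}$ and $ww^*\in\overline{a_iAa_i}$, so that $a_{i-1}\lesssim a_i$ directly from Definition~\ref{dflesssim}. Since $a_i\in\overline{c_iAc_i}$ for each $i$ and the $c_i$ are pairwise orthogonal, the $a_i$ are pairwise orthogonal, and by construction $a_1\lesssim a_2\lesssim\cdots\lesssim a_n$. The only substantive step is isolating the connecting construction; once that is in hand, the remainder is a routine downward induction.
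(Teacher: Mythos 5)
Your proof is correct and takes essentially the same route as the paper's: extract $n$ mutually orthogonal nonzero positive elements from an element of $\overline{aAa}$ with infinite spectrum, then induct downward, replacing each element by $w^*w$ for a connecting element $w$ with $w^*w$ in one hereditary subalgebra and $ww^*$ in the next, so that orthogonality is preserved and $\lesssim$ holds by definition. The only cosmetic difference is how simplicity produces $w$: the paper takes $w=b_{i-1}xb_i\neq 0$ directly, whereas you route through the identity $\sum_j y_j^* y y_j=1$; both are standard and equivalent here.
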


\begin{proof}
In $\overline{aAa},$ there exists a positive element $0\le b\le 1$
such that its spectrum has infinitely many points. From this one
obtains $n$ non-zero mutually orthogonal positive elements
$b_1,b_2,...,b_n.$ Since $A$ is simple, there is $x_{n-1}\in A$ such that $b_{n-1}x_{n-1}b_n\neq 0$. Set $y_{n-1}=b_{n-1}x_{n-1}b_n$. One then has that $y_{n-1}^*y_{n-1}\in \overline{b_nAb_n}$ and
$y_{n-1}y_{n-1}^*\in \overline{b_{n-1}Ab_{n-1}}.$ Then consider
$b_1,b_2,...,b_{n-2}, y_{n-1}y_{n-1}^*.$ The lemma follows by applying the argument above
$n-1$ more times.
\end{proof}

\begin{lem}\label{AL1}
 Let $A$ be a unital separable simple \CA.
 Suppose that $\{A_n\}$ be an increasing sequence of
unital simple \CA s such that $1_A=1_{A_n}$ and
$\cup_{n=1}^{\infty} A_n$ is dense in $A.$

Then $TR(A)\le 1$ if and only if the following holds: For any
$\ep>0,$ any integer $n\ge 1,$ any $a\in (A_n)_+\setminus \{0\}$ and
a finite subset ${\cal F}\subset A_n,$ there exists an integer $N\ge
1$ satisfying the following: There is, for each $m\ge N,$  a \SCA\,
$C\subset A_m$ with $C\in {\cal I}'$ and with $1_C=p$ such that

{\rm (1)} $\|px-xp\|<\ep\tforal x\in {\cal F};$

{\rm (2)} ${\rm dist}(pxp, C)<\ep\tforal x\in {\cal F}$ and

{\rm (3)} $1-p$ is equivalent to a projection $q\in
\overline{aA_ma}.$

\end{lem}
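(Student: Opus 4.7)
The proof splits into the two directions.

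For the \emph{if} direction (the stated condition implies $TR(A)\le 1$), my plan is to reduce an arbitrary tracial-rank query on $A$ to one living inside some $A_n$. Given a finite $\mathcal{G}\subset A$, tolerance $\ep>0$, and a nonzero positive $a\in A$, I would first approximate each element of $\mathcal{G}$ within $\ep/3$ by a finite $\mathcal{F}\subset A_n$ for some large $n$, then apply Lemma \ref{A1} to obtain $b\in(A_{n'})_+\setminus\{0\}$ for some $n'\ge n$ with $b\lesssim a$. Feeding $(\mathcal{F},\ep/3,b,n')$ into the hypothesis yields a subalgebra $C\subset A_m\subset A$ in $\mathcal{I}'$ with unit $p$; the tracial-rank approximations for $\mathcal{G}$ then follow from standard $\ep/3$-estimates, while condition (3) gives $1-p$ equivalent to a projection in $\overline{bA_mb}\subset\overline{bAb}$, which is in turn equivalent (via Cuntz comparison from $b\lesssim a$) to a projection in $\overline{aAa}$.

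For the \emph{only if} direction, I would assume $TR(A)\le 1$, fix $(\mathcal{F},\ep,a,n)$, and apply the tracial-rank definition with $(\mathcal{F},\ep/2,a)$ to obtain a subalgebra $C\subset A$ in $\mathcal{I}$ with unit $p_0$, satisfying the approximations at tolerance $\ep/2$ and with $1-p_0\sim q_0$ for some projection $q_0\in\overline{aAa}$. The task is then to push both $C$ and $q_0$ into $A_m$ simultaneously for every sufficiently large $m$. For the push of $C$, I invoke the classical fact that algebras in $\mathcal{I}$ (finite direct sums of $M_{r_i}(C(X_i))$ with $X_i$ a finite CW complex of dimension $\le 1$) are semiprojective: for any finite $\mathcal{S}\subset C$ and $\delta>0$ there exists $M$ such that for each $m\ge M$ one has a $*$-homomorphism $\phi_m\colon C\to A_m$ with $\|\phi_m(c)-c\|<\delta$ for $c\in\mathcal{S}$. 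Choosing $\mathcal{S}$ to contain $1_C$ together with, for each $x\in\mathcal{F}$, an element $c_x\in C$ witnessing $\|c_x-p_0xp_0\|<\ep/2$, and taking $\delta$ small, the image $C_m:=\phi_m(C)\subset A_m$ is a finite direct sum of quotients of the $M_{r_i}(C(X_i))$ (hence lies in $\mathcal{I}'$) with unit $p_m=\phi_m(1_C)$ close to $p_0$, and inherits conditions (1)--(2) at tolerance $\ep$.

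For the push of $q_0$, I exploit the fact that $\bigcup_{m\ge n}\overline{aA_ma}$ is dense in $\overline{aAa}$ (immediate from $a\in A_n$ and the density of $\bigcup_m A_m$ in $A$): approximate $q_0$ by a self-adjoint $y_m\in\overline{aA_ma}$ and set $q_m:=\chi_{(1/2,\infty)}(y_m)$ via functional calculus. For $m$ large enough that $\|q_m-q_0\|<1$ and $\|p_m-p_0\|<1$, the chain $q_m\sim q_0\sim 1-p_0\sim 1-p_m$ holds in $A$ by the usual stability of Murray--von Neumann equivalence under small perturbations, verifying (3). The hard part will be the semiprojectivity step, which is what converts the abstract subalgebra $C$ obtained from $TR(A)\le 1$ into one actually living in a prescribed $A_m$; everything else --- the transfer of the approximation conditions, the perturbation of $q_0$, and the chain of equivalences in (3) --- reduces to standard $\ep$-estimates and functional calculus.
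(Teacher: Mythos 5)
Your proposal is correct and follows essentially the same route as the paper: the ``if'' direction is the routine approximation argument (which the paper dismisses as immediate and you fill in via Lemma \ref{A1}), while the ``only if'' direction rests on exactly the stability/semiprojectivity property of the algebras in ${\cal I}$ that the paper invokes---without naming it---to push the tracially approximating subalgebra into some $A_N$, whence into every $A_m$ with $m\ge N$. If anything, your treatment of condition (3) is slightly more careful than the paper's, which only records $[1-p]=[1-q]\le [a]$ in $A$ rather than exhibiting the witnessing projection inside $\overline{aA_ma}$.
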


\begin{proof}

First we note that ``if " part follows the definition easily.

To prove the ``only if " part, we assume that $TR(A)\le 1$ and there
is an increasing sequence of \SCA s $A_n\subset A$ which are unital
and the closure of the union $\cup_{n=1}^{\infty}A_n$ is dense in
$A.$ In particular, we may assume that $1_{A_n}=1_A,$ $n=1,2,....$

Let $\ep>0,$ $n\ge 1,$ $a\in (A_n)_+\setminus \{0\}$ and a finite
subset ${\cal F}\subset A_n.$ Since $TR(A)\le 1,$ there is a \SCA\,
$C_1\in {\cal I}$ with $1_{C_1}=q$ such that

(1) $\|xq-qx\|<\ep/2$ for all $x\in {\cal F};$

(2) ${\rm dist}(qxq,C_1)<\ep/2$ for all $x\in {\cal F}$ and

(3) $[1-q]\le [a].$

From (2) above, there is a finite subset ${\cal F}_1\subset C_1$
such that

(2') ${\rm dist}(qxq, {\cal F}_1)<\ep/2$ for all $x\in {\cal F}.$

We may assume that $q\in {\cal F}_1.$

Put $d=\sup\{\|x\|; x\in {\cal F}\}.$

   There is $\dt>0$ and a finite subset ${\cal G}\subset C_1$ satisfying the following: If $B\subset A$ is a
   unital \SCA\, and
   $$
   {\rm dist}(y, B)<\dt
   $$ for all $y\in {\cal G},$ there is a \SCA\, $C'\in {\cal I}'$
   such that $C'\subset B$ and
   $$
   {\rm dist}(z, C')<\ep/2(d+1)
   $$
   for all $z\in {\cal F}_1\cup {\cal G}.$

   By the assumption, there is an integer $N\ge n$ such that
   $$
   {\rm dist}(y, A_N)<\dt/2
   $$
   for all $y\in {\cal G}.$ It follows that there is $C\in {\cal
   I}'$ with $1_C=p$ such that $C\in A_N$ and
   $$
   {\rm dist}(z, C)<\ep/2(d+1)
   $$
   for all $z\in {\cal F}_1\cup {\cal G}.$ In particular,
   $$
   \|p-q\|<\ep/2(d+1).
   $$
   One then checks that

(i) $\|qx-xq\|<\ep$ for all $x\in {\cal F};$

(ii) ${\rm dist}(pxp, C)<\ep$ for all $x\in {\cal F}$ and

(iii) $[1-p]=[1-q]\le [a].$

The lemma follows.

\end{proof}

{The following is well-known.}

\begin{lem}\label{Ldiv}
 For any two positive integers $p$ and $q$ with $p>q.$
Then there exist integers  $1\le m<q,$ $0\le r<p$ and $s\ge 1$ such that
$$
p=mq^s+r.
$$
\end{lem}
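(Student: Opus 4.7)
The plan is straightforward: pick the largest power of $q$ that fits inside $p$ and then apply Euclidean division. Note that for the conclusion to make sense the statement requires $q\geq 2$, since the constraint $1\le m<q$ is vacuous when $q=1$; I assume this was intended.

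First, I would let $s$ be the largest positive integer for which $q^s\le p$. This is well defined: since $p>q\geq 2$, we have $q^1=q\le p$, so the set of such exponents is non-empty; and since $q^n\to\infty$, it is bounded. By the maximality of $s$, we have $q^{s+1}>p$.

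Next, I would apply the ordinary division algorithm to write
$$
p=mq^s+r,\quad 0\le r<q^s,\ m\ge 0.
$$
To verify the claimed bounds on $m$ and $r$: from $q^s\le p$ one gets $m\ge 1$; from $p<q^{s+1}=q\cdot q^s$ one gets $mq^s\le p<q\cdot q^s$, hence $m<q$; and from $r<q^s\le p$ one gets $0\le r<p$.

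There is no real obstacle here; the only point worth verifying is that the two bounds $q^s\le p<q^{s+1}$ translate cleanly into the desired bounds $1\le m<q$ and $0\le r<p$, which they do. The lemma is a standard positional-numeral statement and should slot in as a one-paragraph elementary observation used later in the paper.
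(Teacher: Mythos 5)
Your proof is correct and follows essentially the same route as the paper's: choose the largest $s$ with $q^s\le p$ and then divide $p$ by $q^s$, reading off $1\le m<q$ from the maximality of $s$. Your explicit remark that $q\ge 2$ is tacitly required (otherwise no largest $s$ exists and $1\le m<q$ is impossible) is a fair observation that the paper leaves implicit.
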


\begin{proof}
There exists a largest integer $s\ge 1$ such that $p\ge q^s.$
There are integers $0\le r< p$ and $1\le m$ such that
$$
p=mq^s+r.
$$
Then $1\le m<q.$

\end{proof}

Theorem \ref{T0} follows immediately from the following:

\begin{thm}\label{T1} Let $A$ be a unital separable simple \CA. Then
$TR(A\otimes M_\fp)\le 1$ for all  supernatural numbers $\fp$ of infinite type
 if and only if there exists one supernatural number $\fq$ of infinite type
 such that $TR(A\otimes M_\fq)\le 1.$
\end{thm}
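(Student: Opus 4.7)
The ``only if'' direction is immediate. For the converse, assume $TR(A\otimes M_\fq)\le 1$ for some fixed supernatural number $\fq$ of infinite type and let $\fp$ be an arbitrary supernatural number of infinite type; the aim is to show $TR(A\otimes M_\fp)\le 1$. The plan is to verify the local approximation characterization of Lemma \ref{AL1} for $A\otimes M_\fp$ with respect to the dense filtration $A\otimes M_\fp=\overline{\bigcup_n A\otimes M_{p_n}}$, where $p_1\mid p_2\mid\cdots$ is a sequence of positive integers with supernatural number $\fp$.

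The first observation is standard: tracial rank at most one is preserved under matrix amplification and under tensoring with UHF algebras. Hence $TR(A\otimes M_\fp\otimes M_\fq)\le 1$. Given $\ep>0$, $n\ge 1$, a finite subset $\mathcal F\subset A\otimes M_{p_n}$, and nonzero $a\in(A\otimes M_{p_n})_+$, we apply Lemma \ref{AL1} inside $A\otimes M_\fp\otimes M_\fq$ (using its filtration by $\{A\otimes M_{p_m}\otimes M_{q_k}\}_{m,k}$) to the set $\mathcal F\otimes 1$ and the element $a\otimes 1$. This produces, for $m$ and $k$ sufficiently large, an ${\cal I}'$-subalgebra $C'\subset A\otimes M_{p_m}\otimes M_{q_k}$ with unit $p'$ satisfying the three approximation conditions at accuracy $\ep/2$ relative to $\mathcal F\otimes 1$ and with $[1-p']\le[a\otimes 1]$.

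The heart of the proof---and its chief obstacle---is the descent step: converting $C'\subset A\otimes M_{p_m}\otimes M_{q_k}$ into an ${\cal I}'$-subalgebra $C\subset A\otimes M_{p_{m'}}$ (some $m'\ge m$) that retains the approximation properties at accuracy $\ep$ and verifies the stronger comparison $[1-1_C]\le[a]$ inside $A\otimes M_{p_{m'}}$. When $q_k$ contains prime factors foreign to $\fp$, no unital embedding $M_{q_k}\hookrightarrow M_\fp$ exists, so direct pushforward is impossible. The resolution will combine: (a) the uniqueness of the trace on $M_{q_k}$, enabling a tracial averaging across the $M_{q_k}$-factor; (b) the infinite-type condition on $\fp$, which via $M_\fp\cong M_\fp\otimes M_\fp$ provides abundant matrix amplifications inside $A\otimes M_\fp$ on which to realize the image of such an averaging; (c) the comparison tools of Lemmas \ref{AL0} and \ref{AL00}, together with the divisibility decomposition of Lemma \ref{Ldiv}, which translate the ``amplified'' subequivalence $[1-p']\le[a\otimes 1]$ (amounting in $A\otimes M_{p_m}\otimes M_{q_k}$ to dominating $q_k$ copies of $a$) into the desired $[1-1_C]\le[a]$ in $A\otimes M_{p_{m'}}$. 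Once this transport is executed---and it is the bulk of the technical work---Lemma \ref{AL1} concludes $TR(A\otimes M_\fp)\le 1$, completing the proof.
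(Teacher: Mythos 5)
Your overall skeleton---reduce to the local criterion of Lemma \ref{AL1} for the filtration $A\otimes M_\fp=\overline{\bigcup_n A\otimes M_{p_n}}$, and use $TR(A\otimes M_\fq)\le 1$ to produce ${\cal I}'$-subalgebras of matrix amplifications involving divisors of $\fq$---is the same as the paper's. But your argument stops exactly where the theorem lives: you name the descent step as ``the heart of the proof and its chief obstacle'' and then defer it (``once this transport is executed\dots''), and the ingredients you propose do not assemble into an argument. Tracial averaging over the $M_{q_k}$-factor is a conditional expectation, i.e.\ a completely positive map and not a $*$-homomorphism, so applying it to $C'$ does not produce a \SCA, let alone one in ${\cal I}'$; and $M_\fp\cong M_\fp\otimes M_\fp$ gives you room inside $A\otimes M_\fp$ but no way to land anything carrying the prime factors of $q_k$ foreign to $\fp$. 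As written this is a genuine gap, not a compressed proof.

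The paper's resolution never descends a single $C'$. Working with $A_n=M_{r(n)}(A)$ and the filtration of $M_{r(n)}(A)\otimes M_\fq$ by $M_{r(n)}(A)\otimes M_{k(m)}$, it produces subalgebras $C_N\subset M_{r(n)}(A)\otimes M_{k(N)}$ and $C_{N+1}\subset M_{r(n)}(A)\otimes M_{k(N+1)}$ with $k(N+1)=q_1^{s}k(N)$, and then packs $n_1$ diagonal copies of $C_{N+1}$ and $r_1$ diagonal copies of $C_N$ into $M_{r(n)}(A)\otimes M_{r(n+1)/r(n)}=A_{n+1}$, using Lemma \ref{Ldiv} to write $r(n+1)/r(n)=n_1k(N+1)+r_1k(N)+r_0$ with $n_1, r_1<q_1$ and $r_0<k(N)$. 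No embedding of $M_{q_k}$ into $M_\fp$ is needed: a subalgebra of $M_{r(n)}(A)\otimes M_k$ sits as a corner inside $M_{r(n)}(A)\otimes M_K$ for any $K\ge k$, and the whole point is to make the complement of the unit of the packed subalgebra small in the sense of comparison. That complement consists of $n_1+r_1<2q_1$ copies of the defects $1-e(N+1)$ and $1-e(N)$, each dominated by a diagonal copy of one of the $a_i$, together with $r_0<k(N)$ copies of $1_{M_{r(n)}}$, which are absorbed via $[1_{A_n}]\le m(1)[a_{q_1+1}]$ from Lemma \ref{AL0}; this is precisely why the proof pre-selects $3(q_1+1)$ mutually orthogonal elements $a_1\lesssim\cdots\lesssim a_{3(q_1+1)}$ in $\overline{a_0A_na_0}$ via Lemma \ref{AL00}. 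If you want to complete your outline, this block-diagonal packing with a remainder controlled by comparison is the content that must replace your ingredients (a) and (b).
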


\begin{proof}
Suppose that there is a supernatural number $\fq$ of infinite type
such that $TR(A\otimes M_\fq)\le 1.$

Let $A_n\cong A\otimes M_{r(n)}$  such that $1_{A_n}=1_{A\otimes
M_\fp}$ and $\cup_{n=1}^{\infty} A_n$ is dense in $A\otimes
M_\fp.$ Let $B_n\cong A\otimes M_{k(n)}$ such that
$1_{B_n}=1_{A\otimes M_\fq}$ and $\cup_{n=1}^{\infty} B_n$ is
dense in $A\otimes M_\fp.$

Write $\fq=q_1^{\infty}q_2^{\infty}\cdots q_k^{\infty}\cdots,$
where $q_1,q_2,...,q_k,...$ are prime numbers. We may assume that
$1<q_1<q_2<\cdots q_k<\cdots.$

 Fix $\ep>0,$ $n\ge 1,$ $a_0\in (A_n)_+\setminus\{0\}$ and ${\cal F}\subset
 A_n.$ Since $A_n$ is simple, there are mutually orthogonal
 elements $a_1, a_2,...,a_{3(q_1+1)}\in (A_n)_+\setminus\{0\}$ such that $a_1,
 a_2,..., a_{3(q_1+1)}\in \overline{a_0A_na_0}$ and
 $a_1\lesssim a_2\lesssim \cdots \lesssim a_{3(q_1+1)}.$

 By \ref{A1}, there exists an integer
 $m(1)\ge 1$ such that
$$
[1_{A_n}]\le m(1)[a_{q_1+1}].
$$

 Write $A_n\cong M_{r(n)}(A).$

 Note that $TR(M_{r(n)}(A)\otimes
 M_\fq)\le 1.$ Therefore, by \ref{AL1},  there exists $N\ge 1$ satisfying the following:
  there is, for each $m\ge N,$  a \SCA\, $C_m\subset
 M_{r(n)}(A)\otimes M_{k(m)}$ with $C_m\in {\cal I}'$ and $1_C=e(m)$ such that

(i) $\|e(m) j_0(x) -j_0(x)e(m)\|<\ep/2$ for all $x\in {\cal F},$ where
$j_m: M_{r(n)}(A)\to M_{r(n)}(A)\otimes M_{k(m)}$ is defined by
$j_m(a)=a\otimes 1_{M_{k(m)}}$ for $a\in M_{r(n)}(A);$

(ii) ${\rm dist}(e(m)j_m(x)e(m), C)<\ep/2$ for all $x\in {\cal F}$ and

(iii) $1_{M_{r(n)}(A)\otimes M_{r(m)}}-e(m)$ is equivalent to a
projection in $\overline{j_m(a_1)(A_n\otimes M_{k(m)})j_m(a_1)}.$

We may assume that  $k(N)=q_1^{s_1}q_2^{s_2}\cdots q_k^{s_k}$ and
$k(N)\ge m(q_1).$

To simplify the notation, without loss of generality, we may
assume that $r(n+1)/r(n)>k(N).$ We write
$$
{r(n+1)\over{r(n)}}=N_1k(N)+r_0,
$$
where  $N_1\ge 1$and  $0\le r_0<k(N)$ are integers.  Without loss
of generality, we may further assume that $N_1>m(1)k(N).$ We write
$$
N_1=n_1q_1^s+r_1,
$$
where $q_1>n_1\ge 1, $ $s\ge 1, $ and $0\le r_1<q_1$ are integers.
Thus
$$
{r(n+1)\over{r(n)}}=n_1q_1^sK(N)+ r_1K(N)+ r_0.
$$
Without loss of generality, to simplify notation, we may assume
that $k(N+1)=q_1^sK(N).$

Put $C=\bigoplus_{j=1}^{n_1} C_{N+1}\oplus \bigoplus_{i=1}^{r_1} C_N.$
Put $e=\bigoplus_{j=1}^{n_1}e(N+1)\oplus \bigoplus_{i=1}^{r_1}e(N).$
Define $j': M_{r(n)}(A)\to M_{r(n)}(A)\otimes M_{r(n+1)/r(n)-r_0}$
by
$$
j'(a)={\rm diag}(\overbrace{j_{N+1}(a), j_{N+1}(a),\cdots,
j_{N+1}(a)}^{n_1}) \oplus {\rm diag}(\overbrace{j_{N}(a),
j_{N}(a),\cdots, j_{N}(a)}^{r_1})
$$
for all $a\in M_{r(n)}(A).$

 By
what we have proved, we have that

$\|ej'(x)-j'(x)e\|<\ep/2$ for all $x\in {\cal F},$

${\rm dist}(ej'(x)e, C)<\ep/2$ for all $x\in {\cal F}$ and

$1_{M_{r(n)}(A)\otimes M_{r(n+1)/r(n)}-r_0}-e$ is equivalent to a
projection in $\overline{b_1(M_{r(n)}(A)\otimes
M_{r(n+1)/r(n)-r_0})b_1},$ where
$$
b_1=\sum_{i=1}^{n_1}j'(a_i)+\sum_{i=p_1+1}^{p_1+r_1}j'(a_i).
$$
The last assertion follows the fact that
$$
n_1[1_{M_{r(n)}(A)\otimes M_{k(N+1)}}-e(N+1)]\le
[\sum_{i=1}^{n_1}a_i]\andeqn
$$
$$
 r_1[1_{M_{r(n)}(A)\otimes
M_{k(N)}}-e(N)]\le [\sum_{i=p_1+1}^{p_1+r_1}a_i].
$$
Define $j: M_{r(n)}(A)\to M_{r(n+1)}(A)=M_{r(n)}(A)\otimes
M_{r(n+1)/r(n)}$ by
$$
j(a)=a\otimes 1_{M_{r(n+1)/r(n)}}\tforal a\in M_{r(n)}.
$$

Thus, in $M_{r(n+1)}(A),$

$\|ej(a)-j(a)e\|<\ep/2$ for all $x\in {\cal F},$

${\rm dist}(ej(a)e, C)<\ep/2$ for all $x\in {\cal F}.$

Note that
$$
r_0[1_{M_{r(n)}}]\le r_0m(1)[a_1]\le [j(a_{2q_1+1})].
$$
Thus
$$
[1_{A_{n+1}}-e]=r_0[1_{M_{r(n)}}]+[1_{M_{r(n)}(A)\otimes
M_{r(n+1)/r(n)}-r_0}-e]\le [\sum_{i=1}^{3(q_1+1)} a_i].
$$
It follows that $1_{A_{n+1}}-e$ is equivalent to a projection in
$\overline{j(a_0)A_{n+1}j(a_0)}.$

This proves that $TR(M_{r(n)}(A)\otimes M_{\fp})\le 1.$ It follows
from Proposition 3.2 of \cite{LinTAI} that $TR(A\otimes M_\fp)\le 1.$

\end{proof}

\begin{cor}\label{C1}
Let $A$ be a unital separable simple \CA. Suppose that there exists
a supernatural number $\fq$ of infinite type for which $TR(A\otimes
M_\fq)=0.$ Then, for all supernatural number $\fp$ of infinite type,
$TR(A\otimes M_\fp)=0.$
\end{cor}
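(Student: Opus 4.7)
The plan is to adapt the proof of Theorem \ref{T1} verbatim, but with the class $\mathcal I'$ of one-dimensional algebras replaced throughout by the class $\mathcal I_0$ of finite-dimensional $C^*$-algebras, which corresponds to $TR=0$. Since $TR(A\otimes M_\fq)=0$ trivially gives $TR(A\otimes M_\fq)\le 1$, Theorem \ref{T1} already ensures $TR(A\otimes M_\fp)\le 1$ for every infinite-type $\fp$; the task is purely to upgrade $\le 1$ to $=0$.

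The first step is to prove the $TR=0$ analogue of Lemma \ref{AL1}: if $\{A_n\}$ is an increasing sequence of unital simple $C^*$-subalgebras of $A$ with $1_{A_n}=1_A$ and $\overline{\cup_n A_n}=A$, then $TR(A)=0$ if and only if for every $\ep>0$, every $n$, every finite $\mathcal F\subset A_n$ and every nonzero $a\in (A_n)_+$, there is $N\ge 1$ such that for every $m\ge N$ one can find a finite-dimensional $C^*$-subalgebra $C\subset A_m$ with unit $p$ satisfying $\|px-xp\|<\ep$, $\mathrm{dist}(pxp,C)<\ep$ for $x\in \mathcal F$, and $[1-p]\le[a]$. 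The proof is identical to that of Lemma \ref{AL1}: start with a finite-dimensional $C_1\subset A$ provided by $TR(A)=0$, pick a finite generating set $\mathcal G\subset C_1$, use the stability of the relations defining finite-dimensional $C^*$-algebras to produce, for sufficiently close approximants in some $A_N$, a finite-dimensional $C\subset A_N$ close to $C_1$ in Hausdorff distance, and then control $\|p-q\|$ to carry across the near-commutation and distance estimates.

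Next, I would simply re-run the construction in the proof of Theorem \ref{T1}, choosing each $C_m\subset M_{r(n)}(A)\otimes M_{k(m)}$ to be finite-dimensional (which is available because $TR(M_{r(n)}(A)\otimes M_\fq)=0$) by applying the $TR=0$ analogue of \ref{AL1} just established. Since finite direct sums of finite-dimensional algebras are finite-dimensional, the subalgebra
\[
C=\bigoplus_{j=1}^{n_1}C_{N+1}\oplus \bigoplus_{i=1}^{r_1}C_N
\]
produced in the proof of Theorem \ref{T1} is again finite-dimensional. All the estimates $\|ej'(x)-j'(x)e\|<\ep/2$, $\mathrm{dist}(ej'(x)e,C)<\ep/2$, and the comparison $[1_{A_{n+1}}-e]\le [a_0]$ in $A_{n+1}$ are unaffected by this substitution, so the $TR=0$ analogue of \ref{AL1} is satisfied for $A\otimes M_\fp$ with respect to the sequence $A_n=A\otimes M_{r(n)}$. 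Invoking Proposition 3.2 of \cite{LinTAI} (which is already used at the end of Theorem \ref{T1}) then yields $TR(A\otimes M_\fp)=0$.

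There is no serious obstacle, since essentially all of the combinatorial and comparison-theoretic work has been done in Theorem \ref{T1}; the only thing one must really check is that the class of finite-dimensional $C^*$-algebras is stable under small perturbations and closed under finite direct sums, both of which are classical. The mildly delicate point is the $TR=0$ version of Lemma \ref{AL1}, but this is the standard transfer of a nearby subalgebra from $A$ into the dense union $\cup_n A_n$ via semiprojectivity of finite-dimensional $C^*$-algebras.
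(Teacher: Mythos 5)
Your proposal is correct and follows exactly the route the paper intends: the paper's own proof of Corollary \ref{C1} consists precisely of the remark that one repeats the arguments of Theorem \ref{T1} and Lemma \ref{AL1} verbatim, with the class ${\cal I}'$ replaced by finite-dimensional $C^*$-algebras. Your additional observation that the transfer step in the $TR=0$ analogue of Lemma \ref{AL1} rests on semiprojectivity of finite-dimensional algebras is the right justification for the only point the paper leaves implicit.
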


\begin{proof}
The proof uses the exactly the same argument used in the proof of
\ref{T1} (and \ref{AL1}).
\end{proof}

\section{Unital Simple A$\T$D-algebras}

\begin{thm}\label{AD1}
Every unital simple A$\T$D-algebra $A$ for which
$K_0(A)/{\rm ker}\rho_A\not\cong \Z$ has tracial rank one or zero.
\end{thm}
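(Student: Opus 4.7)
The plan is to prove $TR(A) \leq 1$ directly from the inductive limit structure. Write $A$ as the closure of $\bigcup_{n=1}^{\infty} D_n$, where each $D_n = D(m_n, d_{n,1}, \ldots, d_{n,N_n})$ is a dimension drop circle algebra over $\T$ with drop points $x_{n,1}, \ldots, x_{n,N_n}$, viewed canonically as a subalgebra of $C(\T, M_{m_n})$. Given $\ep > 0$, a finite subset ${\cal F} \subset A$, and a non-zero $a \in A_+$, apply Lemma \ref{A1} to reduce to the case ${\cal F} \subset D_n$ for some $n$ together with $a_0 \in (D_n)_+ \setminus \{0\}$ satisfying $a_0 \lesssim a$. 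The goal is then to construct a \SCA\ $C \in {\cal I}'$ with unit $p$ approximately containing ${\cal F}$ such that $1 - p$ is Murray--von Neumann equivalent to a projection in $\overline{aAa}$.

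The construction partitions $\T$ into arcs $J_1, \ldots, J_k$ whose closures avoid every drop point $x_{n,i}$, together with small open arcs around each $x_{n,i}$ of combined length less than a parameter $\eta > 0$, with each $J_j$ short enough that elements of ${\cal F}$ vary by at most $\ep/2$ on it. Since $J_j$ avoids every drop point, $D_n$ restricts to $M_{m_n}(C(J_j))$, which lies in ${\cal I}'$. A partition-of-unity argument in $C(\T)$, pulled back into $D_n$ by pointwise scalar multiplication, produces positive contractions $e_j \in D_n$ supported on slight shrinkings of $J_j$ which are approximately projections and approximately commute with ${\cal F}$. Perturbing to genuine mutually orthogonal projections $p_1, \ldots, p_k$ by functional calculus (possibly passing to a later stage $D_M$), and setting $p = \sum_j p_j$, gives a subalgebra $C = \bigoplus_{j} p_j D_n p_j \in {\cal I}'$ approximately containing ${\cal F}$.

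The crucial remaining step is to show $1 - p \lesssim a_0$. In every $\tau \in T(A)$, the value $\tau(1-p)$ is bounded by a constant multiple of $\eta$, since $1 - p$ is approximately supported on the small drop-point arcs; by making $\eta$ small, $\tau(1-p) < d_\tau(a_0)$ uniformly on $T(A)$. This is where the hypothesis $K_0(A)/\ker\rho_A \not\cong \Z$ is indispensable: since $A$ is a simple inductive limit of one-dimensional subhomogeneous algebras, $K_0(A)$ is weakly unperforated, and a subgroup of $\Aff(S_u(K_0(A)))$ containing $\Z\cdot[1_A]$ but not isomorphic to $\Z$ is necessarily dense in $\R$ when $T(A)$ is a singleton, and has rank at least two otherwise; in either case $\rho_A(K_0(A))$ is uniformly dense in the subspace of $\Aff(S_u(K_0(A)))$ it spans. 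Combined with weak unperforation, this yields strict comparison of projections by traces in $A$ and promotes the tracial inequality to $[1-p] \leq [a_0] \leq [a]$ in the Cuntz semigroup, completing the verification of $TR(A) \leq 1$. The principal obstacle is establishing this strict-comparison step from the hypothesis alone, without circularity through a classification theorem; the resolution should go through Mygind's structural results on A$\T$D-algebras \cite{Myg-CJM} and an explicit construction, at a late stage $D_M$, of projections with prescribed small tracial values that absorb $1 - p$ by Murray--von Neumann equivalence inside $\overline{a_0 A a_0}$.
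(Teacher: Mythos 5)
Your proposal takes a genuinely different route from the paper. The paper never verifies the tracial approximation property on $A$'s own inductive system: instead it uses Villadsen's range theorem to produce a simple AI-algebra $B$ with the prescribed $(K_0,T,\lambda)$ (this is where $K_0(A)/\ker\rho_A\not\cong\Z$ enters), rearranges the system of $B$ so that at stage $n$ a direct summand of total trace less than $1/n$ can be replaced by dimension drop interval algebras realizing $G_n$ in $K_1$, and thereby builds a model $D$ in which the interval-algebra part is an honest direct summand at each stage. For that model, $TR(D)\le 1$ follows from property (SP) and strict comparison, and the conclusion is transferred to $A$ via Mygind's classification theorem, $A\cong D$. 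Your plan of working directly inside $A$ would be more self-contained if it worked, but it is precisely the route that requires a decomposition theorem, and as written it has gaps that are not merely technical.

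Concretely, three steps fail or are missing. First, the estimate that $\tau(1-p)$ is bounded by a constant multiple of $\eta$ is unjustified: a trace $\tau\in T(A)$ restricts to an arbitrary tracial state of $D_n$, given by a measure on $\T$ that need not be comparable to arc length and may even have atoms at the drop points (already for Goodearl-type simple limits, extremal traces restrict to point evaluations at early stages); excising arcs of small total length therefore does not make $1-p$ uniformly tracially small. Second, the positive contractions $e_j$ cannot be perturbed to projections supported on the arcs: a continuous projection-valued function on the connected space $\T$ has constant rank, so no nonzero projection of $D_n$ (or of any $D_M$) is supported on a proper arc, and the image of $e_j$ in a later stage $D_M$ has, at a typical point $t$, eigenvalues $e_j(s_1(t)),\ldots$ filling $[0,1]$ whenever an eigenvalue map of the connecting homomorphism passes through the transition region of $e_j$ --- there is no spectral gap for functional calculus. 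Supplying this step is essentially the content of a reduction theorem and is the hard part of any direct proof. Third, you explicitly leave the absorption $1-p\lesssim a_0$ unresolved; strict comparison of projections for A$\T$D-algebras is indeed available (weak unperforation of $K_0$ together with Theorem 6.8.5 of \cite{Bla-Ktheory}, or Theorem 4.12 of \cite{EN-Tapprox}), but it only helps after the first two steps produce a genuine projection $p$ with $\tau(1-p)$ small for every $\tau$. I recommend following the paper's strategy: construct a tracially approximately interval model with the same Elliott invariant and invoke Mygind's isomorphism theorem.
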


\begin{proof}
Since $K_1(A)$ is a countable abelian group, we can write $$K_1(A)=\varinjlim_{n\to\infty}( G_{n}, \iota_{n}),$$ with $G_{n}\cong  \bigoplus_{l=1}^{l_n}\mathbb{Z}/p_{n, l}\mathbb{Z}$ for some non-negative integers $l_n$ and $p_{n, l}$, where $p_{n, l}\neq 1$ (note that if $p_{n, l}=0$, one has that $\mathbb{Z}/p_{n, l}\mathbb Z\cong\mathbb Z$). Denote by $p_n=\sum_{l=1}^{l_n}p_{n, l}$.

Since $K_0(A)$ is a simple Riesz group and $K_0(A)/{\rm ker}\rho_A\not\cong \Z$, and the pairing between $T(A)$ and $K_0(A)$ preserves extreme points, by \cite{Vill-EHS0}, there is a simple inductive limit of interval algebras $B$ such that
\begin{equation}\label{kzero}
((K_0(A), K_0(A)_+, [1_A]), T(A), \lambda_A)\cong ((K_0(B), K_0(B)_+, [1_B]), T(B), \lambda_B).
\end{equation}

Write $$B=\varinjlim_{n\to\infty} (\bigoplus_{i=1}^{k_n} B_{n, i}, \phi_{n}),$$ where $B_{n, i}=\mathrm{M}_{m_{n, i}}(\mathrm{C}([0, 1]))$, and write $[\phi_{n}]_0=(r_{n, i, j})$ with $r_{n, i, j}\in\mathbb{Z}^+$, where ${1\leq i\leq k_{n+1}}$ and $1\leq j\leq k_{n}$. Since $A$ is simple, without loss of generality, we may assume that $r_{n, 1, j}> (n+1)p_n$ by passing to a subsequence.

For each $n$ and each $1\leq j\leq k_{n+1}$, consider the restriction of the map $\phi_{n}$ to the direct summand $B_{n, 1}$ and $B_{n+1, j}$, that is, consider the map $\phi_{n}(1, j): B_{n, 1}\to B_{n+1, j}$. It follows from
\cite{Ell-AI} that we may assume that there exist continuous functions $s_1, s_2,..., s_{r_{n, 1, j}}$ such that
\begin{displaymath}
\phi_{n}(1, j)(f)(t)=W^*(t)\mathrm{diag}\{f\circ s_1(t),...,f\circ s_{r_{n, 1, j}}(t)\}W(t)
\end{displaymath}
for some $W(t)\in\mathrm{M}_{m_{n, i}r_{n, 1, j}}(\mathrm{C}([0, 1]))$. Factor through the map $\phi_{n}(1, j)$ by
\begin{displaymath}
\xymatrix{
B_{n, 1}\ar[r]^-{\psi_{1}}&(\bigoplus_{l=1}^{l_n}\mathrm{M}_{p_{n, l}m_{n, 1}}(\mathrm{C}([0, 1])))\oplus B_{n, 1} \ar[r]^-{\psi_2}& B_{n+1, j},
}
\end{displaymath}
where $$\psi_1(f)(t)=\mathrm{diag}\{\{f\circ s_1,...,f\circ s_{p_{n, 1}}\},..., \{f\circ s_{p_n-p_{n, l_n}+1},...,f\circ s_{p_n}\}, f\}.$$ and $$\psi_2(f\oplus g)=W^*\mathrm{diag}\{f,  g\circ s_{p_n+1},..., g\circ s_{r_{n, 1, j}}\}W$$ is the diagonal embedding. Since $r_{n, 1, j}\geq np_n$, one has that the restriction of any tracial state of $B$ to the unit of $\bigoplus_{l=1}^{l_n}\mathrm{M}_{p_{n, l}m_{n, 1}}(\mathrm{C}([0, 1]))$ has value less than $1/n$.

Therefore, by replacing $B_{n, 1}$ by $\mathrm{M}_{p_{n, l}m_{n, 1}}(\mathrm{C}([0, 1]))\oplus (\bigoplus_{r_{n, 1, j}-p_n}B_{n, 1})$, one may assume that there is an inductive limit decomposition $$B=\varinjlim_{n\to\infty} (\bigoplus_{i=1}^{k_n} B_{n, i}, \phi_{n}),$$ where $B_{n, i}=\mathrm{M}_{m_{n, i}}(\mathrm{C}([0, 1]))$, such that $k_n>l_n$, and $m_{n, j}=d_{n, j}p_{n, j}$ for some natural number $d_{n, j}$ for any $1\leq j\leq l_n$. Moreover, one has that $\tau(e)<1/n$ for any $\tau\in T(A)$, where $e$ is the unit of $\bigoplus_{j=1}^{l_n} B_{n, j}$. In other words,
\begin{equation}\label{smalltrace}
\rho(e)<1/n
\end{equation}
for any $\rho\in S_u(K_0(A))$. Fix this inductive limit decomposition.

Now, let us replace certain interval algebras at each level $n$ by certain dimension drop interval algebras so that the new inductive limit gives the desired $K_1$-group, and keep the $K_0$-group and the paring unchanged.

At level $n$, for each $1\leq l\leq l_n$, if $p_{n, l}\neq 0$, denote by $D_{n, l}$ the dimension drop C*-algebra $\mathbf{I}[m_{n, l}, m_{n, l}p_{n, l}, m_{n, l}]$; if $p_{n, l}=0$, denote by $D_{n, l}$ the circle algebra $\mathrm{M}_{m_{n, l}}(\mathrm{C}(\mathbb T))$. Then, one has $$(K_0(D_{n, l}), K^+(D_{n, l}), 1_{D_{n, l}})=(\mathbb Z, \mathbb Z^+, m_{n, l})\quad{\mathrm{and}}\quad K_1(D_{n, l})=\mathbb{Z}/p_{n, l}\mathbb{Z}.$$

Replace each $B_{n, l}$ by $D_{n, l}$, and denote by $$D_n=D_{n, 1}\oplus\cdots\oplus D_{n, l_n}\oplus B_{n, l_n+1}\oplus\cdots\oplus B_{n, k_n}.$$ It is clear that $$K_0(D_n)\cong K_0(B_n)\quad\mathrm{and}\quad K_1(D_n)\cong\bigoplus_{l=1}^{l_n}\mathbb{Z}/p_{n, l}\mathbb{Z}\cong G_n$$ Let us construct maps $\chi_{n}: D_n\to D_{n+1}$ as the following.

For the direct summand $D_{n, i}$ and any direct summand $D_{n+1, j}$, if $p_{n, i}$ and $p_{n+1, j}$ are non-zero, by Corollary 3.9 of \cite{JS-Z}, there is a map $\chi_{n}(i, j): D_{n+1, i}\to D_{n, j}$ such that $$[\chi_{n}(i, j)]_0=[\phi_{n}(i, j)]_0=r_{n, i, j}$$ and $$[\chi_{n}(i, j)]_1=\iota_n(i, j).$$

If $p_{n, i}=0$ and $p_{n+1, j}\neq 0$, then define the map $$\chi_{n}(i, j): \mathrm{M}_{m_{n, i}}(\mathbb C)\otimes(\mathrm{C}(\mathbb T))\cong D_{n, i}\to pD_{n+1, j}p\cong \mathrm{M}_{r_{n, i, j}m_{n, i}}(\mathbb C)\otimes\mathbf{I}[1, p_{n+1, j}, 1]$$ where $p$ is a projection stands for $r_{n, i, j}m_{n, i}\in K_0(D_{n+1, j})$, by $$e\otimes z\to\mathrm{diag}\{e\otimes u, e\otimes z_1,...,e\otimes z_{r_{n, i, j}-1}\},$$ where $z$ is the standard unitary $z\mapsto z$, $z_i$ are certain points in the unit circle, and $u$ is a unitary in $pD_{n+1, j}p$ which represents $\iota_n(i, j)(1)$. Then, it is clear that $$[\chi_{n}(i, j)]_0=[\phi_{n}(i, j)]_0=r_{n, i, j}$$ and $$[\chi_{n}(i, j)]_1=\iota_n(i, j).$$

A similar argument for $p_{n, i}\neq 0$ and $p_{n+1, j}=0$ also provides a homomorphism $\chi_n(i, j)$ which induces the right $K$-theory map. Moreover, the argument above also applies to the maps between $D_{n, i}$ and $B_{n+1, j}$, and between $B_{n, i}$ and $D_{n+1, j}$, such that there is a map $\chi_{n}(i, j)$ with $$[\chi_{n}(i, j)]_0=[\phi_{n}(i, j)]_0=r_{n, i, j}$$ and $$[\chi_{n}(i, j)]_1=\iota_n(i, j).$$

For direct summand $B_{n, i}$ and $B_{n+1, j}$, define $$\chi_{n}(i, j)=\phi_{n}(i, j).$$

In this way, we have a homomorphism $\chi_n: D_n\to D_{n+1}$ satisfying $$[\chi_n]_0=[\phi_n]\quad\mathrm{and}\quad [\chi_n]_1=\iota_{n, n+1}$$
Let us consider the inductive limit $$D=\varinjlim_{n\to\infty}(D_n, \chi_n).$$ It is clear that $K_0(D)=K_0(B)$ and $K_1(D)=\varinjlim(G_n, \iota_n)=K_0(A)$. With a suitable choice of $\chi_{n}(i, j)$ between $D_{n, i}$ and $D_{n+1, j}$, $D_{n, i}$ and $B_{n+1, j}$, and $B_{n, i}$ and $D_{n+1, j}$, we may assume that $D$ is a simple C*-algebra. Let us show $\mathrm{TR}(D)\leq 1$.

From the construction, it is clear that $D$ has the following property: For any finite subset $\mathcal F\subset D$ and any $\ep>0$, there exists $n$ such that if denote by $I_n=\bigoplus_{i=l_n+1}^{p_n} B_{n, i}$ and $p_n=1_{I_n}$, then, for any $x\in \mathcal F$
\begin{enumerate}
\item $||[p_n, x]||\leq\ep$, and
\item there is $a\in I_n$ such that $||{p_nxp_n-a}||\leq\ep$.
\end{enumerate}
By Theorem 3.2 of \cite{LinTAI}, the C*-algebra has the property (SP), that is, any nonzero hereditary sub-C*-algebra contains a nonzero projection. Thus, in order to show $\mathrm{TR}(D)\leq 1$, one only has to show that for any given projection $q\in D$, one can choose $n$ sufficiently large such that $1-p_n\lesssim q$.

Note that the C*-algebra $D$ is an inductive limit of dimension drop interval algebras together with circle algebras, which satisfy the strict comparison on projections, i.e., for any two projections $e$ and $f$, if $\tau(e)<\tau(f)$ for any tracial state $\tau$, then $e\lesssim f$. Then $D$ also has the strict comparison on projections. (See, for example, Theorem 4.12 of \cite{EN-Tapprox}.)

Therefore, in order to show $1-p_n\lesssim q$, one only has to show that for any given $\ep>0$, there is a sufficiently large $n$ such that $\tau(1-p_n)\leq\ep$ for any $t\in T(D)$. However, this condition can be fulfilled by Equation \ref{smalltrace}, and thus, the C*-algebra $D$ is tracial rank one.

Let us show that $B$ and $D$ has the same tracial simplex, and has the same pairing with the $K_0$-group. Consider the non-unital C*-algebra $$C=\varinjlim_{n\to\infty}(\bigoplus_{i=l_n+1}^{k_n} B_{n, i}, \psi_n),$$ where the map $\psi_n$ is the restriction of $\phi_n$ to $\bigoplus_{i=l_n+1}^{k_n} B_{n, i}$ and $\bigoplus_{i=l_{n+1}+1}^{k_{n+1}} B_{n+1, i}$. Then, by Lemma 10.8 (and its proof) of \cite{LinTAI}, there are isomorphisms ${r^{\#}}$ and ${r_{\#}}$, and ${s^{\#}}$ and ${s_{\#}}$ such that
\begin{displaymath}
\xymatrix{
T(B)\ar[r]^-{r_B}\ar[d]_-{r^{\#}} & S_u(K_0(B))\\
T(C)\ar[r]^-{r_C} & S_{u'}(K_0(C))\ar[u]_-{r_{\#}},
}\quad\quad
\xymatrix{
T(D)\ar[r]^-{r_D}\ar[d]_-{s^{\#}} & S_u(K_0(D))\\
T(C)\ar[r]^-{r_C} & S_{u'}(K_0(C))\ar[u]_-{s_{\#}}
}
\end{displaymath}
commutes. Therefore, there are isomorphisms $t^\#$ and $t_\#$ such that
\begin{displaymath}
\xymatrix{
T(B)\ar[r]^-{r_B}\ar[d]_-{t^{\#}} & S_u(K_0(B))\\
T(D)\ar[r]^-{r_D} & S_{u'}(K_0(D))\ar[u]_-{t_{\#}}
}
\end{displaymath}
commutes. Therefore, the C*-algebra $B$ and $D$ has the same simplex of traces and pairing map. Hence, $$((K_0(D), K_0(D)_+, [1_D]), T(D), \lambda_A)\cong ((K_0(B), K_0(B)_+, [1_B]), T(B), \lambda_B),$$ and therefore by Equation \ref{kzero}, $$((K_0(D), K_0(D)_+, [1_D]), K_1(D), T(D), \lambda_A)\cong ((K_0(A), K_0(A)_+, [1_A]), K_1(A) T(A), \lambda_A).$$

Since $D$ is also an inductive limit of dimension drop interval algebras together with circle algebras, by (the proof of) Theorem 9.9 of \cite{Myg-CJM}, one has that $D$ is a simple inductive limit of dimension drop circle algebra, and hence by Theorem 11.7 of \cite{Myg-CJM}, one has that $A\cong D$. Since $TR(D)\leq 1$, we have that $TR(A)\leq 1$, as desired.
\end{proof}

\begin{thm}\label {AD2}
Every unital simple A$\T$D-algebra is in ${\cal A}_{\cal Z}.$
\end{thm}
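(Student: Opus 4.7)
The plan is to verify $A\in{\cal A}$ and then establish ${\cal Z}$-stability.

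For membership in ${\cal A}$: every dimension drop circle algebra is subhomogeneous, hence type I, amenable, and satisfies the UCT, and these properties descend through inductive limits, so $A$ lies in ${\cal N}$. By Theorem \ref{T0} it suffices to produce one supernatural number $\fp$ of infinite type with $TR(A\otimes M_\fp)\le 1$. Writing $A=\varinjlim_n D_n$ with $D_n$ finite direct sums of dimension drop circle algebras, and $M_\fp=\varinjlim_n M_{k(n)}$, we have $A\otimes M_\fp=\varinjlim_n(D_n\otimes M_{k(n)})$; since
$$D(n,d_1,\ldots,d_N)\otimes M_k\cong D(nk,d_1 k,\ldots,d_N k),$$
the C*-algebra $A\otimes M_\fp$ is again a unital simple A$\T$D-algebra. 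Two cases now arise. If $K_0(A)/\ker\rho_A\not\cong\Z$, Theorem \ref{AD1} applied directly to $A$ yields $TR(A)\le 1$, whence $TR(A\otimes M_\fp)\le 1$. If $K_0(A)/\ker\rho_A\cong\Z$, then
$$K_0(A\otimes M_\fp)/\ker\rho\;\cong\;\Z\otimes_\Z\Z[1/\fp]\;=\;\Z[1/\fp],$$
which is not isomorphic to $\Z$ because $\fp$ is of infinite type, so Theorem \ref{AD1} applies to $A\otimes M_\fp$ and again gives $TR(A\otimes M_\fp)\le 1$. Hence $A\in{\cal A}$.

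For ${\cal Z}$-stability I would use finite nuclear dimension. Each dimension drop circle algebra is subhomogeneous with primitive ideal space of topological dimension one, so has uniformly bounded nuclear dimension, and consequently the inductive limit $A$ has finite nuclear dimension. Since $A$ is also unital, simple, separable and nuclear, Winter's ${\cal Z}$-stability theorem (\cite{Winter-Z}) yields $A\otimes{\cal Z}\cong A$. Combined with the preceding paragraph this gives $A\in{\cal A}_{\cal Z}$.

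The main obstacle is the ${\cal Z}$-stability step: Corollary \ref{CT0} cannot be invoked directly because it would require $A\in{\cal A}_{\cal Z}$ already as a hypothesis, and in case $K_0(A)/\ker\rho_A\cong\Z$ we do not know that $TR(A)\le 1$ to deduce ${\cal Z}$-stability from classical classification results. An alternative route that avoids Winter would be to observe that ${\cal Z}$ is itself a simple A$\T$D-algebra (via Mygind's Theorem 9.9 of \cite{Myg-CJM}), to verify that $A\otimes{\cal Z}$ can be rewritten as a simple A$\T$D-algebra with the same Elliott invariant as $A$ (using that $K_0(A)$ is a simple dimension group, hence weakly unperforated), and then to conclude $A\cong A\otimes{\cal Z}$ by Mygind's classification theorem (Theorem 11.7 of \cite{Myg-CJM}).
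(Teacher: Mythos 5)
Your verification that $A\in{\cal A}$ is essentially the paper's argument: both reduce to Theorem \ref{AD1} applied to $A\otimes M_\fp$, using that $A\otimes M_\fp$ is again a unital simple A$\T$D-algebra and that $K_0(A\otimes M_\fp)/\ker\rho$ contains a copy of $\Z[1/\fp]$ and so cannot be $\Z$ (the paper asserts this without the explicit computation $\Z\otimes\Z[1/\fp]=\Z[1/\fp]$ that you supply, which is a welcome addition). Where you diverge is the ${\cal Z}$-stability step. The paper does not use nuclear dimension at all: in the case $K_0(A)/\Inf(K_0(A))\not\cong\Z$ it identifies $A$ as a unital simple AH-algebra of no dimension growth via Theorem 10.4 of \cite{LinTAI}, deduces approximate divisibility from \cite{EGL-ADiv}, and applies Theorem 2.3 of \cite{TW1}; in the general case it invokes Theorem 4.5 of \cite{TW1} directly, which gives ${\cal Z}$-stability for the relevant class of simple ASH inductive limits. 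Your route via uniformly bounded nuclear dimension of the building blocks is mathematically sound, but the theorem you need --- finite nuclear dimension implies ${\cal Z}$-stability for unital separable simple infinite-dimensional nuclear \CA s --- is \emph{not} in \cite{Winter-Z} (that reference concerns localizing the Elliott conjecture at strongly self-absorbing algebras); it is a substantially later result of Winter, so as cited your step is unsupported within this paper's references. The Toms--Winter citation is the cheaper and period-appropriate way to close the argument.

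Your proposed "alternative route that avoids Winter" has a genuine gap: $A\otimes{\cal Z}$ is an inductive limit of tensor products of dimension drop circle algebras with prime dimension drop interval algebras, which are subhomogeneous over two-dimensional base spaces and are not dimension drop circle algebras; so it is not known at that point that $A\otimes{\cal Z}$ is an A$\T$D-algebra, and Mygind's Theorem 11.7 requires both algebras to lie in that class before it can be applied. Establishing that $A\otimes{\cal Z}$ is A$\T$D would require an additional argument (e.g., a range-of-invariant theorem plus a classification applicable to $A\otimes {\cal Z}$ itself), so this sketch does not stand on its own. Your observation that Corollary \ref{CT0} cannot be invoked directly is correct.
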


\begin{proof}
If $K_0(A)/{\rm Inf}(K_0(A))\not\cong \Z,$ then, \ref{AD1} shows that $TR(A)\le 1.$ By Theorem 10.4 of \cite{LinTAI},
$A$ is in fact a unital simple AH-algebra with no dimension growth. It follows from \cite{EGL-ADiv} that $A$ is also
approximately divisible. It follows from Theorem 2.3 of \cite{TW1} that $A$ is ${\cal Z}$-stable. So $A\in {\cal A}_{\cal Z}.$

Let $A$ be a general unital simple A$\T$D-algebra. Let ${\mathfrak{p}}$ be a supernatural number of infinite type.
Then $K_0(A\otimes M_{\mathfrak{p}})/{\rm ker}\rho_A\not \cong \Z.$ It follows from \ref{AD1} that
$TR(A\otimes M_\fp)\le 1.$ Thus $A\in {\cal A}.$

It follows from Theorem 4.5 of \cite{TW1} that $A$ is ${\cal Z}$-stable. Therefore $A\in {\cal A}_{\cal Z}.$
\end{proof}

\begin{NN}\label{NN1}
{\rm
From Theorem \ref{AD2} we see that Theorem \ref{CT0} also unifies the classification
theorems of \cite{LinTAI} and that of \cite{Myg-CJM}. In the next two sessions, we will show that
${\cal A}$ contains many more \CA s.

}
\end{NN}

\section{Rationally Riesz Groups}

\begin{thm}\label{RZ} For any countable abelian groups $G_{00}$ and $G_1,$ any group
extension:
$$
0\to G_{00}\to G_0 {\stackrel{\pi}{\to}} \Z\to 0,
$$
with
$$
(G_0)_+=\{x\in G_0: \pi(x)>0\andeqn x=0\},
$$
any order unit $u\in G_0$
and any metrizable Choquet simple $S,$ there exists a unital simple ASH-algebra $A\in {\cal A}$ such that
$$
((K_0(A), K_0(A)_+, [1_A]), K_1(A), T(A))=((G_0, (G_0)_+, u), G_1, S).
$$
\end{thm}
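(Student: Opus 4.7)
The plan is to construct $A$ as a simple unital ASH inductive limit whose building blocks include $2$-dimensional pieces, and then verify membership in $\mathcal{A}$ via Theorem~\ref{T0}. First I would observe that the prescribed order cone $(G_0)_+=\pi^{-1}(\mathbb{Z}_{>0})\cup\{0\}$ forces $G_{00}=\mathrm{Inf}(G_0)$: for any $g\in G_{00}$ and any $k\ge 1$, the elements $nu\pm kg$ lie in $(G_0)_+$ for every $n\ge 1$ because $\pi(nu\pm kg)=n\pi(u)>0$, so $|h(g)|\le 1/k$ for every state $h\in S_u(G_0)$ and hence $h(g)=0$. Consequently every state factors through $\pi$, the space $S_u(G_0)$ reduces to a single point, and the pairing map $r_A:T(A)\to S_u(K_0(A))$ is automatic; no additional constraint is imposed on realising $T(A)=S$.

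Next I would build $A=\varinjlim(P_n,\phi_n)$ where each
$$P_n=\bigoplus_i M_{k_{n,i}}(C(X_{n,i}))$$
with $X_{n,i}$ a finite connected CW complex of dimension at most two, chosen among intervals, circles, $2$-spheres, and Moore spaces $M(\mathbb{Z}/k,1)$. The $2$-sphere and Moore summands will supply the $G_{00}$-part of $K_0$ through their $\tilde K^0$-groups, and the circle summands will supply the $G_1$-part through their $K^1$-groups; for each such block with $X$ connected one has $K_0(M_k(C(X)))=\mathbb{Z}\oplus\tilde K^0(X)$ with positive cone $\{(n,y):n>0\}\cup\{0\}$, matching the prescribed order on $G_0$ exactly. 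The connecting maps $\phi_n$ are assembled from diagonal embeddings with point-evaluation components, chosen via the Thomsen--Goodearl realisation of metrisable Choquet simplices as trace spaces, with multiplicities growing rapidly enough in the style of Villadsen to force simplicity, to produce $u$ as the correct order unit, and to realise the prescribed $K_0$, $K_1$, and $T(A)$ at the limit.

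To conclude $A\in\mathcal{A}$ I would apply Theorem~\ref{T0}: it suffices to show $TR(A\otimes M_{\fq})\le 1$ for a single supernatural number $\fq$ of infinite type. After tensoring one has
$$K_0(A\otimes M_{\fq})/\mathrm{Inf}\cong\mathbb{Z}[1/\fq]\not\cong\mathbb{Z},$$
which is precisely the hypothesis of Theorem~\ref{AD1}. I would then argue that $A\otimes M_{\fq}$ is a unital simple A$\mathbb{T}$D-algebra: rationally, the $2$-dimensional summands $M_k(C(S^2))\otimes M_{\fq}$ and $M_k(C(M(\mathbb{Z}/p,1)))\otimes M_{\fq}$ can be approximated, at the inductive-limit level, by dimension-drop circle algebras carrying the same $K$-theory and the same tracial data. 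Theorem~\ref{AD2} then yields $TR(A\otimes M_{\fq})\le 1$, so $A\in\mathcal{A}$.

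The principal obstacle is this final absorption step: showing that $A\otimes M_{\fq}$ is isomorphic to an A$\mathbb{T}$D-algebra. This forces coordination between the choice of the CW spaces $X_{n,i}$, the multiplicities inside the connecting maps, and the point evaluations, so that after tensoring with a UHF algebra of infinite type the $2$-dimensional spectra collapse in a controlled manner within the inductive system while the prescribed Elliott invariant and simplicity are preserved. In effect the construction of $A$ and the verification that $A\in\mathcal{A}$ must be carried out in tandem, with the building blocks designed from the outset so that the tensor-product system is visibly A$\mathbb{T}$D.
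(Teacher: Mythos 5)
Your opening observation is correct and matches what the paper uses implicitly: with $(G_0)_+=\pi^{-1}(\Z_{>0})\cup\{0\}$ one has $G_{00}=\mathrm{Inf}(G_0)$, $S_u(G_0)$ is a single point, and the pairing imposes no constraint on $T(A)$. The direct ASH construction you sketch is also in the spirit of what is actually invoked (the paper simply cites Elliott's range theorem \cite{point-line} for an ASH algebra realizing $(G_0,(G_0)_+,u)$, $G_1$, and a one-point trace space). The problem is the last third of your argument, the verification that $A\in{\cal A}$.

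You propose to show $TR(A\otimes M_{\fq})\le 1$ by arguing that $A\otimes M_{\fq}$ is a unital simple A$\T$D-algebra and then applying Theorems \ref{AD1} and \ref{AD2}. This cannot work in general. For a dimension drop circle algebra $D(n,d_1,\dots,d_N)$ the six-term exact sequence associated to evaluation at the points $x_1,\dots,x_N$ exhibits $K_0$ as a subgroup of $\Z^N$, so $K_0$ of every building block, and hence of every A$\T$D-algebra, is torsion-free; but $G_{00}$ is an \emph{arbitrary} countable abelian group, and if it contains torsion of order prime to $\fq$ (e.g.\ $G_{00}=\Z/2\Z$ with $\fq$ odd) then $K_0(A\otimes M_{\fq})$ still contains that torsion as infinitesimals, so $A\otimes M_{\fq}$ is not an A$\T$D-algebra. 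Your Moore-space blocks $M(\Z/p,1)$ put exactly this torsion into $K_0$, so the "absorption" you defer as the principal obstacle is not merely technical --- it is impossible along that route. The paper sidesteps this entirely: since the algebra $B$ realizing $((G_0,(G_0)_+,u),G_1,\{\mathrm{pt}\})$ has a unique trace, $B\otimes M_{\fp}$ is approximately divisible with projections (trivially) separating traces, hence has real rank zero and stable rank one by \cite{RorUHF}, whence $TR(B\otimes M_{\fp})=0$ by \cite{Lin-Corelle} --- no comparison with A$\T$D-algebras and no constraint on $G_{00}$ is needed. The simplex $S$ is then supplied by a separate tensor factor $B_0$ with $(K_0,K_1)=(\Z,0)$ and $T(B_0)=S$ (which \emph{is} an A$\T$D-algebra, covered by Theorem \ref{AD2}), and $A=B_0\otimes B$ lies in ${\cal A}$ by the tensor-product closure of ${\cal A}$ (Theorem 11.10 of \cite{Lnclasn}). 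If you want to salvage your approach, you would have to replace the A$\T$D comparison by a real-rank-zero argument of this kind for the unique-trace factor, at which point you have essentially reconstructed the paper's proof.
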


\begin{proof}
Let $S_0$ be the point which corresponding
to the unique state on $G_0.$
It follows a theorem of Elliott (\cite{point-line}) that there exists a unital simple ASH-algebra $B$ such that
$$
((K_0(B), K_0(B)_+, [1_B]), K_1(B), T(B))=((G_0, (G_0)_+, u), G_1, S_0).
$$
Then $B\otimes M_\fp$ is a unital separable simple \CA\, which is
approximately divisible and the projections of $B\otimes M_\fp$
separate the tracial state space (in this case it contains a single
point). Thus $B\otimes M_\fp$ has real rank zero and stable rank one
by \cite{RorUHF}. Thus, by  Proposition 5.4 of \cite{Lin-Corelle}, $TR(B\otimes
M_\fp)=0.$ Let $B_0$ be the unital simple A$\T$D-algebra  (see
Theorem 4.5 of \cite{JS-Z}) such that
$$
((K_0(B_0), K_0(B)_+, [1_{B_0}]), K_1(B_0), T(B_0), \lambda_{B_0})=
((\Z, \N, 1), \{0\}, S, r_S).
$$
It follows from \ref{AD2} that $B_0\in {\cal A}.$ By Theorem 11.10 (iv) of \cite{Lnclasn}, $B_0\otimes B\in {\cal A}.$
Define $A=B_0\otimes B.$ One then calculates that
$$
((K_0(A), K_0(A)_+, [1_A]), K_1(A), T(A))=((G_0, (G_0)_+, u), G_1, S).
$$

\end{proof}

\begin{thm}\label{R2} For any countable weakly unperforated simple Riesz group $G_0$ with order unit  $u,$ any
countable abelian group $G_1$ and any metrizable Choquet simplex
$S$ and any surjective \hm\,  $r_S: S\to S_u(G_0)$ which maps
$\partial_e(S)$ onto $\partial_e(S_u(G_0)).$ There exists a unital
simple ASH-algebra $A\in {\cal A}_z$ such that
$$
((K_0(A), K_0(A)_+, [1_A]), K_1(A), T(A), \lambda_A)=((G_0, (G_0)_+, u), G_1, S, r_S).
$$
\end{thm}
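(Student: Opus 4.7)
The plan is to realize the prescribed Elliott invariant directly by a unital simple ASH-algebra using the range-of-invariant machinery of Elliott, Villadsen, and Thomsen, and then pass into ${\cal A}_{\cal Z}$ via ${\cal Z}$-stabilization together with the criterion of Theorem \ref{T0}. Concretely, by Elliott's realization theorem \cite{point-line} (possibly combined with its Villadsen/Thomsen refinements that allow an arbitrary extreme-point-preserving surjection $S\to S_u(K_0)$ for the trace pairing), one obtains a unital simple ASH-algebra $A_0$ with
$$((K_0(A_0),K_0(A_0)_+,[1_{A_0}]),K_1(A_0),T(A_0),\lambda_{A_0})\cong((G_0,(G_0)_+,u),G_1,S,r_S).$$
The hypotheses on $G_0$ (countable, weakly unperforated, simple Riesz with order unit $u$) and on $r_S$ (surjective and preserving extreme points) are exactly those required for this realization.

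Set $A:=A_0\otimes {\cal Z}$; this is automatically ${\cal Z}$-stable, and because $G_0$ is weakly unperforated the Elliott invariant of $A$ coincides with that of $A_0$. To conclude $A\in{\cal A}_{\cal Z}$ it remains to verify $TR(A\otimes M_{\fp})\le 1$ for some supernatural number $\fp$ of infinite type. Choose $\fp$ so that $G_0\otimes \Q_{\fp}$, modulo its infinitesimals, is not isomorphic to $\Z$; then the argument of Theorem \ref{AD1}, applied to a convenient simple A$\T$D (or AH no-dimension-growth) realization of the same invariant after tensoring by $M_{\fp}$, together with Proposition 3.2 of \cite{LinTAI}, yields the tracial rank bound. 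By Theorem \ref{T0} this suffices for $A\in{\cal A}$, hence $A\in{\cal A}_{\cal Z}$.

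The main obstacle is Step 1: one must confirm that \cite{point-line} (suitably combined with later Villadsen/Thomsen constructions) realizes the full generality of our hypotheses, namely arbitrary weakly unperforated simple Riesz $G_0$, arbitrary countable $G_1$, arbitrary metrizable Choquet simplex $S$, and arbitrary extreme-point-preserving surjection $r_S\colon S\to S_u(G_0)$. If a direct appeal is not available, the back-up strategy is to mimic the proof of Theorem \ref{RZ}: realize $(G_0,G_1)$ with the minimal trace simplex $S_u(G_0)$ by a $B\in{\cal A}$ from \cite{point-line}, construct an auxiliary simple A$\T$D-algebra $B_0$ whose trace structure is designed so that product traces on $B\otimes B_0$ assemble into $S$ with pairing $r_S$, and set $A=B\otimes B_0$. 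The delicate point is to control $T(B\otimes B_0)$, since the product-trace pairing with $K_0(A)=G_0$ factors through the $B$-trace, making the fiber structure of the desired trace simplex over $S_u(G_0)$ the key constraint. In either route, membership in ${\cal A}_{\cal Z}$ is then immediate from Theorem \ref{AD2} and Theorem 11.10 of \cite{Lnclasn}.
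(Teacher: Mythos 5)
Your strategy---realize the invariant by an ASH model from \cite{point-line} and then verify membership in ${\cal A}_{\cal Z}$---has a genuine gap at exactly the point you flag, and it is not the realization step but the tracial rank step. Your proposed verification of $TR(A_0\otimes M_{\fp})\le 1$, namely ``apply the argument of Theorem \ref{AD1} to a convenient simple A$\T$D (or AH) realization of the same invariant after tensoring by $M_{\fp}$,'' only shows that \emph{some} algebra with the invariant of $A_0\otimes M_{\fp}$ has tracial rank at most one. To transfer this to $A_0\otimes M_{\fp}$ itself you must identify the two algebras, and the only available classification theorem (Corollary \ref{CT0}, i.e.\ \cite{Lnclasn}) takes as a hypothesis that $A_0\otimes M_{\fp}$ already has $TR\le 1$; the argument is circular. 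The paper meets the same difficulty in Lemma \ref{LRa}, but there it is resolved only because $T(A)=S_u(G_0)$, so projections separate traces and Winter's argument yields $TR(A\otimes M_{\fp})=0$; for an arbitrary simplex $S$ surjecting onto $S_u(G_0)$ that device is unavailable. Your back-up route inherits the same unresolved core: for general $S$ and $S_u(G_0)$ the trace simplex of $B\otimes B_0$ is not the product you need---the paper controls $T(B\otimes B_0)$ only when one factor has a unique trace (Theorem \ref{RZ}) or when both simplices have finitely many extreme points (Corollary \ref{trace-prod}), and removing that restriction is precisely the content of the hard Section 6 (Lemmas \ref{tracecl} and \ref{decomp}, Theorem \ref{MT}), which you cannot presuppose here.

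What the proposal misses is that the Riesz hypothesis makes all of this unnecessary. The paper's proof is a two-line case split. If $G_0/{\rm Inf}(G_0)\not\cong \Z$, Villadsen's range theorem \cite{Vill-EHS} realizes the full quadruple $((G_0,(G_0)_+,u),G_1,S,r_S)$ by a unital simple AH-algebra with no dimension growth; such an algebra has $TR\le 1$ directly by Theorem 2.5 of \cite{LinTAI}---no UHF-stabilization, no classification---and is approximately divisible, hence ${\cal Z}$-stable, so it lies in ${\cal A}_{\cal Z}$. If $G_0/{\rm Inf}(G_0)\cong \Z$, then $S_u(G_0)$ is a single point, the pairing $r_S$ carries no information, and Theorem \ref{RZ} applies verbatim. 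Your proposal never invokes \cite{Vill-EHS} in this direct way, and that is the step that actually closes the argument.
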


\begin{proof}
First, we assume that $G_0/{\rm Inf}(G_0)\not\cong \Z.$ It follows from
a theorem of Villadsen (\cite{Vill-EHS}) that, in this case,  there is a
unital simple AH-algebra $C$ with no dimension growth (and with
tracial rank no more than one---see Theorem 2.5 of \cite{LinTAI}) such that
$$
((K_0(C),K_0(C)_+, [1_C]), K_1(C), T(C), \lambda_C)=((G_0, (G_0)_+, u), G_1, S, r_S).
$$
The case that $G_0/{\rm Inf}(G_0)\cong \Z$ follows from \ref{RZ}.
\end{proof}

\begin{rem}\label{rem1}
{\rm
Theorem \ref{R2} includes all unital simple A$\T$D-algebras. Let $A$ be a unital simple \CA\, in ${\cal A}$
with weakly unperforated Riesz group $K_0(A).$  If $K_0(A)/{\rm Inf}(K_0(A))\not\cong \Z,$ then
$TR(A)\le1 .$ By the classification result in \cite{LinTAI}, $A$ is in fact a unital simple AH-algebra.
If $K_0(A)=\Z,$ then, by \ref{AD2}, $A$ is a unital simple A$\T$D-algebra.
However,
Theorem \ref{R2} contains unital simple \CA \, $A$ for which
$K_1(A)$ is an arbitrary countable abelian group.  These
\CA s can not be isomorphic to a unital simple A$\T$D-algebras (see Theorem 1.4 of \cite{Myg-CJM}).

}
\end{rem}

\begin{df}\label{dfRi}
{\rm
Let $G$ be a partially ordered group.
We say $G$ has rationally  Riesz property if the following holds:
For two pairs of elements $x_1, x_2,$ and $y_1, y_2\in G$ with $x_i\le y_j,$ $i,j=1,2,$ there exists
$z\in G$ such that
$$
nz\le m y_i\andeqn mx_i\le nz,\,\,\,i=1,2,.
$$
where $m,n \in \N\setminus\{0\}.$
}
\end{df}

Let $G$ and $H$ be two weakly unperforated simple ordered groups with order-unit $u$ and $v$ respectively.
Consider the group $G\otimes H$. Set the semigroup
$$(G\otimes H)_+=\{a\in G\otimes H;\ (s_1\otimes s_2)(a)>0, \forall s_1\in S_u(G), \forall s_2\in S_v(H)\}\cup\{0\}.$$
Since $G_+\otimes H_+\subseteq (G\otimes H)_+$ and  $G\otimes
H=G_+\otimes H_+-G_+\otimes H_+$, one has $$G\otimes H=(G\otimes
H)_+-(G\otimes H)_+.$$ For any $a\in((G\otimes H)_+)\cap(-(G\otimes
H)_+)$, if $a\neq 0$, then $(s_1\otimes s_2)(a)>0$ for any $s_1\in
S_u(G)$ and  $s_2\in S_v(H)$, and $(s_1\otimes s_2)(-a)>0$, for any
$s_1\in S_u(G)$ and $s_2\in S_v(H)$, which is a contradiction.
Therefore, $$((G\otimes H)_+)\cap(-(G\otimes H)_+)=\{0\}.$$
Moreover, since $(s_1\otimes s_2)(u\otimes v)=1$ for any $s_1\in
S_u(G)$ and $s_2\in S_v(H)$, and $S_u(G)$ and $S_v(H)$ are compact,
for any element $a\in (G\otimes H)$, there is a natural number $m$
such that $$m(u\otimes v)-a\in (G\otimes H)_+.$$ Hence $(G\otimes H,
(G\otimes H)_+, u\otimes v)$ is a scaled ordered group.

\begin{lem}\label{tensorstate}
Let $G$ and $H$ be simple ordered groups with order units $u$ and $v$ respectively.
If $H$ has a unique state $\tau$, then for any state $s$ on the ordered group $G\otimes H$,
one has $$s(g\otimes h)=s(g\otimes v)\tau(h)$$ for any $g\in G$, $h\in H$.
\end{lem}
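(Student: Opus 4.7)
The plan is to fix $g\in G$ and analyze the map $\phi_g:H\to\R$ defined by $\phi_g(h)=s(g\otimes h)$. Since $s$ is a group homomorphism on $G\otimes H$ and tensoring with a fixed first factor is additive, $\phi_g$ is a group homomorphism $H\to\R$. Write $c_g=\phi_g(v)=s(g\otimes v)$. The goal is to prove $\phi_g=c_g\tau$, from which the stated identity is immediate.

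First I would handle the case $g\in G_+$. I claim $\phi_g$ is positive, that is, $\phi_g(h)\ge 0$ for every $h\in H_+$. Given such an $h$, I need $g\otimes h\in (G\otimes H)_+$. If $g=0$ or $h=0$ the conclusion is trivial. Otherwise, because $G$ and $H$ are simple ordered groups with order unit, every nonzero positive element is itself an order unit, so every state evaluates strictly positively on it: for any $s_1\in S_u(G)$ and $s_2\in S_v(H)$ one has $s_1(g)>0$ and $s_2(h)>0$, hence $(s_1\otimes s_2)(g\otimes h)=s_1(g)s_2(h)>0$. By the definition of $(G\otimes H)_+$ recalled just before the lemma, this places $g\otimes h$ in $(G\otimes H)_+$, and positivity of $s$ yields $\phi_g(h)\ge 0$.

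With positivity in hand, I would split into two subcases. If $c_g>0$, then $c_g^{-1}\phi_g$ is a positive group homomorphism $H\to\R$ sending $v$ to $1$, that is, a state on $(H,v)$; uniqueness of $\tau$ forces $c_g^{-1}\phi_g=\tau$, so $\phi_g(h)=c_g\tau(h)$ for all $h\in H$. If $c_g=0$, then for any $h\in H_+$ there is $n\in\N$ with $h\le nv$ (as $v$ is an order unit), so $0\le\phi_g(h)\le n\phi_g(v)=0$; hence $\phi_g$ vanishes on $H_+$ and, since $H=H_+-H_+$, on all of $H$. Either way $\phi_g=c_g\tau$.

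For a general $g\in G$, choose $n\in\N$ with $g+nu\in G_+$; then $g=(g+nu)-nu$ with both summands in $G_+$, and applying the previous step to each summand together with bilinearity of $\otimes$ and additivity of $s$ and $\tau$ gives
\[
s(g\otimes h)=s((g+nu)\otimes h)-s(nu\otimes h)=\bigl(s((g+nu)\otimes v)-s(nu\otimes v)\bigr)\tau(h)=s(g\otimes v)\tau(h),
\]
which is the asserted formula. I do not expect a genuine obstacle; the only place where simplicity is really used is the positivity check that $g\otimes h\in(G\otimes H)_+$ when $g\in G_+$ and $h\in H_+$, and that is a direct unpacking of the definition of the positive cone on $G\otimes H$.
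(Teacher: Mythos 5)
Your proof is correct and follows essentially the same route as the paper's: fix $g\ge 0$, check that $h\mapsto s(g\otimes h)$ is a positive homomorphism on $H$, normalize it to a state, and invoke uniqueness of $\tau$. The only differences are cosmetic --- the paper notes that simplicity forces $s(g\otimes v)\ge 1/m>0$ for nonzero positive $g$, so your $c_g=0$ subcase is vacuous, and it leaves implicit the reduction to positive $g$ and the positivity check $g\otimes h\in(G\otimes H)_+$ that you spell out.
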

\begin{proof}
It is enough to show the statement for strictly positive $g$ and
$h$. For each $g\in G_+\setminus\{0\}$, since $G$ is simple, one has
that $u\leq mg$ for some natural number $m$. Hence $$s(g\otimes
v)\geq \frac{1}{m}s(u\otimes v)=\frac{1}{m},$$ and in particular,
$s(g\otimes v)\neq0$. Consider the map $s_g: H\to \R$ defined by
$$
s_g(h)=\frac{s(g\otimes h)}{s(g\otimes v)}.
$$
Then $s_g$
is a state of $H$. Since $\tau$ is the unique state of $H$. One has
that $s_g=\tau$, and hence $$\frac{s(g\otimes h)}{s(g\otimes
v)}=\tau(h)\quad\textrm{for any}\ h\in H.$$ Therefore, $$s(g\otimes
h)=s(g\otimes v)\tau(h),$$ as desired.
\end{proof}

\begin{lem}\label{Rstate}
Let $G$ be a countable weakly unperforated simple partially ordered group with an order unit $u.$
Then,  for any dense subgroup $D$ of $\R$ containing 1,   the map
$\lambda: S_{u\otimes 1}(G\otimes D)\to S_u(G)$ defined by
$$
\lambda(s)(x)=s(j(x))\tforal x\in G,
$$
where $j: G\to G\otimes D$ defined by $j(x)=x\otimes 1$ for all
$x,$ is an affine homeomorphism.
\end{lem}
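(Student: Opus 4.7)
The plan is to verify that $\lambda$ is an affine continuous bijection between compact Hausdorff convex sets, whence it must be an affine homeomorphism. Continuity and affineness are immediate: weak-$*$ convergence $s_n\to s$ forces $s_n(x\otimes 1)\to s(x\otimes 1)$ for every $x\in G$, and $\lambda$ obviously preserves convex combinations. So the content of the lemma is bijectivity.

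The one preliminary I need is that $D$, as a partially ordered abelian group with order unit $1$, has a unique state, namely the inclusion $\iota:D\hookrightarrow\R$. To see this, note that any state $\sigma:D\to\R$ extends $\Q$-linearly to a positive, unital map $\tilde\sigma:D\otimes\Q\to\R$; by density of $D$ in $\R$, $D\otimes\Q$ sits as a dense $\Q$-subspace of $\R$, and any $\Q$-linear, order-preserving, unital map from such a subspace into $\R$ must be the identity, since each element can be sandwiched between rational numbers.

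Injectivity of $\lambda$ now follows directly from Lemma \ref{tensorstate}: applying it with $H=D$, $v=1$ and $\tau=\iota$ gives
$$
s(g\otimes d)=s(g\otimes 1)\cdot d=\lambda(s)(g)\cdot d,
$$
so $\lambda(s)$ determines $s$. For surjectivity, given $t\in S_u(G)$ I would define $s:G\otimes D\to\R$ by bilinear extension of $s(g\otimes d)=t(g)d$. This is a well-defined group homomorphism sending $u\otimes 1\mapsto 1$; and for any nonzero $a\in (G\otimes D)_+$, the definition of the positive cone forces $(s_1\otimes s_2)(a)>0$ for all $s_1\in S_u(G)$ and $s_2\in S_1(D)=\{\iota\}$, so in particular $s(a)=(t\otimes\iota)(a)>0$. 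Hence $s$ is a state with $\lambda(s)=t$.

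The only real obstacle is establishing the uniqueness of the state on $D$; once that is in hand, Lemma \ref{tensorstate} handles injectivity, the explicit construction $t\mapsto t\otimes\iota$ handles surjectivity, and standard compact-Hausdorff considerations upgrade the continuous affine bijection $\lambda$ to an affine homeomorphism.
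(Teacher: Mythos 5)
Your proposal is correct and follows essentially the same route as the paper, whose entire proof is the observation that a dense subgroup of $\R$ with the induced order has a unique state together with an appeal to Lemma \ref{tensorstate}. You have simply supplied the details the paper leaves implicit (the sandwiching-by-rationals argument for the unique state on $D$, the surjectivity via $t\otimes\iota$, and the compactness upgrade to a homeomorphism), all of which check out.
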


\begin{proof}
Since any dense subgroup of $\R$ with the induced order has unique state, the statement follows from Lemma \ref{tensorstate} directly.
%
%
\end{proof}

\begin{prop}\label{PRi}
Let $G$ be a countable weakly unperforated simple partially ordered group with an order unit $u.$
Then the following are equivalent:

{\rm (1)} $G$ has  the rationally  Riesz property;

{\rm (2)} $G\otimes D$ has the Riesz property for some dense subgroup $D$ of $\R$ containing $1;$

{\rm (3)} $G\otimes D$ has  the Riesz property for all dense subgroups $D$ of $\R$ containing $1;$

{\rm (4}) For any two pairs of elements $x_i$ and $y_i\in G$ with $x_i\le y_j,$ $i,j=1,2,$ there is an element $z\in G$ and
a real number $r>0$ such that
$$
s(x_i)\leq r s(z)\leq s(y_j),\tforal s\in S_u(G),\,i,j=1,2.
$$
\end{prop}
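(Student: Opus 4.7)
The plan is to prove the cycle $(3) \Rightarrow (2) \Rightarrow (1) \Rightarrow (3)$ together with the direct equivalence $(1) \Leftrightarrow (4)$. The implication $(3) \Rightarrow (2)$ is immediate. The structural fact I use throughout is the standard state characterization of the positive cone in a simple weakly unperforated ordered group with order unit: $a \in G_+ \setminus \{0\}$ if and only if $s(a) > 0$ for every $s \in S_u(G)$. By Lemma \ref{Rstate} together with the definition of $(G \otimes D)_+$ via states, the same characterization applies in $G \otimes D$.

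For $(1) \Rightarrow (4)$, apply any state $s$ to $m x_i \le n z \le m y_j$ and divide by $m$, so that $r = n/m$ witnesses (4). The reverse $(4) \Rightarrow (1)$ is the delicate direction. Given $z_0$ and $r$ from (4), and excluding the trivial case where some $x_i = y_j$ (resolved by taking $z = x_i$), the state characterization forces $s(y_j) - s(x_i) > 0$ uniformly on the compact set $S_u(G)$. Using the order unit $u$, I would replace $z_0$ by $z = K z_0 + L u$ for suitable positive integers $K, L$, chosen so that a rational $n/m$ sufficiently close to $r$ yields $m s(x_i) < n s(z) < m s(y_j)$ strictly on $S_u(G)$; the state characterization then delivers $m x_i \le n z \le m y_j$ in $G$.

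For $(2) \Rightarrow (1)$, embed $x_i, y_j$ into $G \otimes D$ as $x_i \otimes 1$ and $y_j \otimes 1$ (possible since $1 \in D$), and apply the Riesz property to produce an interpolant $\zeta \in G \otimes D$. Outside trivial equality cases, $\zeta$ carries strict state inequalities with a uniform gap $\delta > 0$ by compactness. Writing $\zeta = \sum_k g_k \otimes d_k$ and approximating each $d_k$ by a rational (via density of $\Q$ in $\R$) with precision governed by $\delta$, I obtain an element of $G \otimes \Q$, which can be written as $z \otimes (1/N)$ for some $z \in G$ and $N \in \N$. The approximation preserves the strict state inequalities, and the state characterization of $G_+$ then yields $N x_i \le z \le N y_j$, witnessing rationally Riesz.

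For $(1) \Rightarrow (3)$, given a dense subgroup $D$ of $\R$ containing $1$ and $\xi_i \le \eta_j$ in $G \otimes D$, a similar compactness argument gives $s(\eta_j) - s(\xi_i) \ge \delta > 0$ uniformly outside trivial cases. Using (4) applied at the level of affine functions on $S_u(G)$ determined by suitable $G$-approximants of $\hat{\xi}_i, \hat{\eta}_j$, and using density of $D$ in $\R$ to realize the rescaling $r$ by an element $d \in D$, I construct $z \in G$ and $d \in D$ so that $\zeta = z \otimes d$ has state evaluation strictly between those of the $\xi_i$ and $\eta_j$, providing the desired Riesz interpolant. The main obstacle throughout is ensuring strict state-level inequalities survive all approximations and perturbations; this rests on compactness of $S_u(G)$, the flexibility afforded by the order unit $u$, and the state characterization that converts strict state positivity into order relations in $G$ or $G \otimes D$.
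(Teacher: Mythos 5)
Your overall architecture is sound and runs on the same fuel as the paper's proof: Lemma \ref{Rstate} to identify $S_{u\otimes 1}(G\otimes D)$ with $S_u(G)$, compactness of $S_u(G)$ to get uniform gaps, density of $\Q$ (resp.\ $D$) to rationalize coefficients, and the characterization of strict positivity by states (Theorem 6.8.5 of \cite{Bla-Ktheory}) to convert strict state inequalities back into order relations. Your routing $(3)\Rightarrow(2)\Rightarrow(1)\Rightarrow(3)$ plus $(1)\Leftrightarrow(4)$ differs from the paper's $(1)\Rightarrow(2)$, $(2)\Rightarrow(4)$, $(4)\Rightarrow(1)$, $(4)\Rightarrow(3)$, but the steps $(2)\Rightarrow(1)$ and $(1)\Rightarrow(3)$ are essentially the paper's $(2)\Rightarrow(4)$ and $(4)\Rightarrow(3)$ recombined, and they work: the Riesz interpolant $\zeta$ in $G\otimes D$ either coincides with an endpoint $x_i\otimes 1$ (whence that $x_i$ interpolates in $G$) or, by simplicity of the order on $G\otimes D$, satisfies strict state inequalities with a uniform gap, which survives rational approximation of the tensor coefficients.

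The genuine gap is in your $(4)\Rightarrow(1)$. You take the pair $(z_0,r)$ furnished by (4) for the given quadruple and propose to perturb it to $z=Kz_0+Lu$ with a nearby rational $n/m$. This cannot work in general: the affine function $s\mapsto rs(z_0)$ is only known to satisfy the \emph{non-strict} inequalities, and it may touch $\max_i s(x_i)$ at some states and $\min_j s(y_j)$ at others in a configuration where \emph{no} affine function on $S_u(G)$ lies strictly between the two envelopes --- so no choice of $z\in G$ and rational $n/m$ can succeed. Concretely, let $G=\Z\cdot 1+\Z s+\Z t\subset \mathrm{Aff}([0,1]^2)$ with the strict order ($g>0$ iff $\hat g>0$ pointwise), $u=1$; this is countable, simple, weakly unperforated, with $S_u(G)=[0,1]^2$. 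Take $x_1=-s-t$, $x_2=s+t-2$, $y_1=t-s+1$, $y_2=s-t+1$; then $y_j-x_i\ge 1$ pointwise, so $x_i<y_j$, and $z_0=0$, $r=1$ witness the conclusion of (4) for this quadruple since $\hat x_i\le 0\le \hat y_j$. Yet any affine $g$ with $\hat x_i<g<\hat y_j$ would have $g>0$ at $(0,0)$ and $(1,1)$ but $g<0$ at $(1,0)$ and $(0,1)$, contradicting affineness at the center; one checks likewise that no $z,m,n$ satisfy $mx_i\le nz\le my_j$. (This $G$ of course fails (4) for some \emph{other} quadruple, so it is not a counterexample to the implication itself --- but it shows that any argument using only the data of (4) for the fixed quadruple, as yours does, is invalid.) The repair is to apply hypothesis (4) not to $(x_i,y_j)$ but to the shifted quadruple $Nx_i+u\le Ny_j-u$, which is legitimate for large $N$ because each $y_j-x_i$ is an order unit in the simple group $G$. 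This yields $s(x_i)+1/N\le rs(z)\le s(y_j)-1/N$ for all $s$, a uniform margin that survives replacing $r$ by a nearby rational $n/m$ (using $\sup_s|s(z)|<\infty$), after which Theorem 6.8.5 of \cite{Bla-Ktheory} gives $mNx_i\le nz\le mNy_j$. This is presumably what the paper's terse assertion ``it follows from (4) that there is a rational $r$ with strict inequalities'' intends; your proposal should be amended accordingly.
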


\begin{proof}

It is clear, by the assumption that $G$ is weakly unperforated, that
{\rm (1)} $\Longrightarrow$ {\rm (2)} for $D=\Q.$

It is also clear that  {\rm (3)} $\Longrightarrow$ {\rm (2)}.

That {\rm (2)} $\Longrightarrow$ {\rm (4)} follows from \ref{Rstate}.

Suppose that {\rm (4)} holds. Let $x_i\le y_j$ be in $G,$ $i, j=1,2.$ If one of $x_i$ is the same as one of $y_j$, say $x_1=y_1$,
then {\rm (2)} holds for $z=x_2$ and $m=n=1$. Thus, let us assume that $x_i<y_j$, $i, j=1, 2$.
Since $G$ is simple, $S_u(G)$ is compact. It follows from {\rm (4)} there is a
rational number $r\in \Q$ such that
$$
s(x_i)<r s(z)<s(y_j),\tforal s\in S_u(G),\,i,j=1,2.
$$
Write $r=n/m$ for some $m,n\in \N.$ Then
$$
s(mx_i)<s(nz)<s(my_j)\rforal s\in S_u(G),\,i,j=1,2.
$$
It follows from Theorem 6.8.5 of \cite{Bla-Ktheory} that
$$
mx_i\le nz\le my_j\,\,\, i,j=1,2.
$$

Thus (4)  $\Longrightarrow$ (1).

By applying \ref{Rstate}, it is even easier to shows that {\rm
(4)} $\Longrightarrow$ {\rm (3)}.
\end{proof}

\begin{prop}\label{PRi2}
Let $G$ be a countable weakly unperforated simple partially ordered
group with an order unit $u.$ Then $G$ has { the} rationally
Riesz property if and only if $S_u(G)$ is a metrizable Choquet
simplex.
\end{prop}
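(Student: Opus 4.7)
The plan is to combine Proposition~\ref{PRi} and Lemma~\ref{Rstate} with the classical Effros--Handelman--Shen / Goodearl--Handelman characterization of Choquet simplices as state spaces of dimension groups, reducing both directions to statements about the rationalization $G \otimes \Q$.

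For the forward direction, suppose $G$ has the rationally Riesz property. By Proposition~\ref{PRi}, $G \otimes \Q$ is a Riesz group with order unit $u \otimes 1$ and the strict positive cone inherited from its states. Being a countable $\Q$-vector space it is automatically unperforated, hence a countable simple dimension group. By the Effros--Handelman--Shen theorem, the state space of a countable dimension group with order unit is a metrizable Choquet simplex; via Lemma~\ref{Rstate} this transfers to the affinely homeomorphic $S_u(G)$.

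For the reverse direction, suppose $S_u(G)$ is a metrizable Choquet simplex; then so is $S_{u \otimes 1}(G \otimes \Q)$ by Lemma~\ref{Rstate}. The plan is to verify condition~(4) of Proposition~\ref{PRi}. Given $x_i \le y_j$ in $G$ for $i, j = 1, 2$, one passes to $\mathrm{Aff}(S_u(G))$: by Edwards' separation theorem in a Choquet simplex there is a continuous affine function $h$ with $\rho(x_i) \le h \le \rho(y_j)$, strict where the $\rho$-values are. Approximating $h$ uniformly by $\rho(z)/n$ for some $z \in G$, $n \in \N$ (density of $\rho(G) \otimes \Q$ in $\mathrm{Aff}(S_u(G))$), a compactness-plus-perturbation argument then yields the exact rational interpolation $s(x_i) \le r\, s(z) \le s(y_j)$ for all $s \in S_u(G)$.

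The main obstacle is the density/approximation step in the reverse direction: replacing $h \in \mathrm{Aff}(S_u(G))$ by an element of $\rho(G \otimes \Q)$ up to arbitrarily small uniform error. When $G$ has nontrivial infinitesimals, $\rho$ is not injective and the Archimedean hypothesis of the Goodearl--Handelman characterization must be handled by passing to $G/\mathrm{Inf}(G)$. The cleanest route is to cite the converse direction of Goodearl--Handelman directly (simple unperforated ordered groups with Choquet-simplex state space have Riesz interpolation) applied to $G \otimes \Q$; alternatively, one can carry out the separation-and-approximation argument by hand, using that on a compact metrizable simplex, rational scalings of $\rho(G)$ are uniformly dense in $\mathrm{Aff}(S_u(G))$.
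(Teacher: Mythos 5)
Your overall strategy coincides with the paper's: both directions are pushed onto $G\otimes\Q$ via Proposition \ref{PRi} and Lemma \ref{Rstate}. The forward direction is fine in substance (the paper reaches the simplex conclusion by realizing $(G\otimes\Q)/\mathrm{Inf}(G\otimes \Q)$ as $K_0$ of a simple AF-algebra and quoting Sakai for the trace simplex; your more direct appeal to the fact that state spaces of interpolation groups are Choquet simplices is Theorem 10.17 of \cite{Goodearl} rather than Effros--Handelman--Shen proper, but either route works).

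For the converse, however, your ``cleanest route'' is not available: the parenthetical claim that simple unperforated ordered groups with Choquet-simplex state space have Riesz interpolation is false, and the paper's own example refutes it --- $\Z\oplus\Z$ with positive cone $\{0\}\cup\{(a,b):a\ge 1,\ b\ge 1\}$ is simple and weakly unperforated, its state space is a one-dimensional simplex, yet interpolation fails. The divisibility of $G\otimes\Q$ is doing essential work and cannot be absorbed into such a citation. That leaves your by-hand route, and there the step you correctly flag as the main obstacle --- uniform density of $\{\tfrac{1}{n}\rho(g): n\in\N,\ g\in G\}$ in $\mathrm{Aff}(S_u(G))$ --- is exactly where the paper spends its effort: it first obtains density of the \emph{linear span} of $\rho(G\otimes\Q)+\R u$ from Corollary 7.4 of \cite{Goodearl} (a unital point-separating subgroup has dense linear span), and then uses $\Q$-divisibility to replace real scalars by rational ones, so that $\rho(G\otimes\Q)$ itself is dense. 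As written you assert this density rather than prove it, so the crux of the argument is missing. Once it is supplied, your separation-and-perturbation verification of condition (4) of Proposition \ref{PRi} does go through: simplicity gives $s(x_i)<s(y_j)$ uniformly (by compactness) whenever $x_i\neq y_j$, the equality cases are handled by taking $z=x_i$, and Theorem 6.8.5 of \cite{Bla-Ktheory} recovers the order from strict positivity of states --- which is essentially how the paper concludes.
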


\begin{proof}
Suppose that $G$ has the rationally  Riesz property. Then, by \ref{PRi},
$G\otimes \Q$ is a weakly unperforated simple Riesz group and
$(G\otimes \Q)/{\rm Inf}(G\otimes \Q)\not\cong \Z.$ Put
$F=(G\otimes \Q)/{\rm Inf}(G\otimes \Q).$ Then, it follows that
$F$ is a simple dimension group. It then follows from the Effros-Handelman-Shen Theorem (\cite{EHS}) that there
exists a unital simple AF-algebra $A$ with
$$
(K_0(A), K_0(A), [1_A])=(F, F_+, {\bar u}),
$$
where ${\bar u}$ is the image of $u$ in $F.$ It follows that
$T(A)=S_u(F).$ By Theorem 3.1.18 of \cite{Sakai-book}, $T(A)$ is a metrizable Choquet simplex. It
follows from \ref{Rstate} that $S_u(G)$ is a metrizable Choquet
simplex.

For the converse, let $G$ be a countable weakly unperforated
simple partially ordered group with an order unit $u$ so that
$S_u(G)$ is a metrizable Choquet simplex. Let $F=G\otimes \Q.$ By
\ref{PRi}, it suffices to show that $F$ has the Riesz property. By
\ref{Rstate}, $S_{u\otimes 1}(F)$ is a metrizable Choquet simplex.
It follows from Theorem 11.4 of \cite{Goodearl} that $\mathrm{Aff}(S_{u\otimes 1}(F))$ has the
Riesz property. Let $\rho: F\to \mathrm{Aff}(S_{u\otimes 1}(F))$ be the
\hm\, defined by
$$
\rho(g)(s)=s(g)\tforal s\in S_{u\otimes 1}(G)\andeqn \tforal g\in
F.
$$
Define $F_1=F+\R(u\otimes 1)$ and extend $\rho$ from $F_1$ into
$\mathrm{Aff}(S_{u\otimes 1}(F))$ in an obvious way.  Then
$\rho(F_1)$ contains the constant functions. It also separates the
points. By Corollary 7.4 of \cite{Goodearl}, the linear space generated by $\rho(F_1)$ is dense in
$\mathrm{Aff}(S_{u\otimes 1}(G)).$

Moreover, for any real number $r<1$ and any positive element $p\in
F$, the positive affine function $r\rho(p)$ can be approximated by
elements in $\rho(F)$. It follows that $\rho(F)$ is dense in
$\mathrm{Aff}(S_{u\otimes 1}(G)).$ It follows that $\rho(F)$ has the
Riesz property. Moreover, since the order of $F$ is given by
$\rho(F)$ (see Theorem 6.8.5 of \cite{Bla-Ktheory}), this implies that $F$ has the Riesz property.
\end{proof}

\begin{exm}
Let $H$ be any nontrivial group. Then ordered group $\mathbb Z\oplus H$ with the order induced by $\mathbb Z$ is a
simple ordered group, which satisfies the rationally  Riesz property. However, it is not a Riesz group.

Let $\Gamma$ be a cardinality at most countable and bigger than $1$.
Then, the ordered group $G=\bigoplus_{\Gamma}\mathbb Z$ with the
positive cone $\{0\}\cup\bigoplus_{\Gamma}\mathbb Z^+$ is simple,
since any positive element is an order unit. Consider $a_1=(1, 0, 0,
0,...)$, $a_2=(0, 1, 0, 0, ...)$, $a_3=(2, 2, 0, 0, ...)$, and $
a_4=(2, 3, 0, 0, ...)$. Then $a_1, a_2\leq a_3, a_4$. However, one
can not find an element $b$ such that $a_1, a_2\leq b\leq a_3, a_4$.
Hence, the group $G$ is not Riesz. But this group has
rationally Riesz property.
\end{exm}

\begin{prop}\label{R0}
Let $A\in {\cal A}$ be a $\mathcal Z$-stable  \CA. Then $(K_0(A), K_0(A)_+,
[1_A])$ is a countable weakly unperforated simple partially
ordered group with order unit $[1_A]$ which has the rationally  Riesz
property. Moreover $S_{[1_A]}(K_0(A))$ is a metrizable Choquet
simplex.
\end{prop}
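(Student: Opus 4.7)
The plan is to verify each clause in turn, with the rationally Riesz property carrying the main weight. Countability of $K_0(A)$ is immediate from separability of $A$; simplicity of $K_0(A)$ as a partially ordered group, with $[1_A]$ an order unit, is standard for unital simple C*-algebras. For weak unperforation I would appeal to the $\mathcal{Z}$-stability hypothesis together with the Gong--Jiang--Su theorem, which guarantees that $K_0$ of any unital $\mathcal{Z}$-absorbing C*-algebra is weakly unperforated.

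The heart of the argument is the rationally Riesz property, and here I would apply the criterion of Proposition \ref{PRi}(2). Fix any supernatural number $\fp$ of infinite type; then $D := K_0(M_\fp)$ is a dense subgroup of $\R$ containing $1$. Since $M_\fp$ has torsion-free $K_0$ and vanishing $K_1$, the K\"unneth theorem yields a natural ordered-group isomorphism
\[
K_0(A \otimes M_\fp) \;\cong\; K_0(A) \otimes D.
\]
Since $A \in {\cal A}$, the hypothesis gives $TR(A \otimes M_\fp) \leq 1$, so $A \otimes M_\fp$ is a unital simple C*-algebra of tracial rank at most one. From the structure theory of tracial-rank-one algebras, $K_0(A \otimes M_\fp)$ is then a weakly unperforated Riesz group. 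Consequently $K_0(A) \otimes D$ is Riesz, and Proposition \ref{PRi} delivers the rationally Riesz property for $K_0(A)$.

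Having established that $K_0(A)$ is a countable weakly unperforated simple partially ordered group with the rationally Riesz property, the final clause that $S_{[1_A]}(K_0(A))$ is a metrizable Choquet simplex is an immediate consequence of Proposition \ref{PRi2}.

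The main step that requires care is the invocation of the Riesz property for $K_0(B)$ when $B$ is unital simple with $TR(B) \leq 1$: it is not proved in the present excerpt, but it is standard in the tracial-rank-one literature, arising from local approximation by sub-C*-algebras in the class ${\cal I}'$ (whose $K_0$-groups are manifestly Riesz) combined with the strict comparison of projections available from stable rank one and weak unperforation. One should also confirm that the identification $K_0(A \otimes M_\fp) \cong K_0(A) \otimes D$ respects positive cones, not merely underlying groups, but this is routine for simple C*-algebras with weakly unperforated $K_0$ since in that setting the order is determined by states.
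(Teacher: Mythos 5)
Your argument is correct and follows essentially the same route as the paper: weak unperforation, simplicity, and the order unit come from the Gong--Jiang--Su result for $\mathcal Z$-stable algebras, the rationally Riesz property is deduced from $TR(A\otimes M_\fp)\le 1$ via the identification $K_0(A\otimes M_\fp)\cong K_0(A)\otimes D$ and Proposition \ref{PRi}, and the Choquet simplex conclusion comes from Proposition \ref{PRi2}. The paper simply specializes to the universal UHF algebra (so $D=\Q$) and is terser about the K\"unneth identification and the Riesz property of $K_0$ for tracial-rank-one algebras, both of which you rightly flag as the points requiring outside input.
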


\begin{proof}

It follows from \cite{GJS-Z} that $(K_0(A), K_0(A)_+, [1_A])$ is a
countable weakly unperforated simple partially ordered group with
order unit $[1_A].$ Since $TR(A\otimes \Q)\le 1,$ $(K_0(A\otimes
\Q), K_0(A\otimes \Q)_+, [1_{A\otimes \Q}])$ is a Riesz group. It
follows that $(K_0(A), K_0(A)_+, [1_A])$ has the rationally  Riesz
property. By \ref{Rstate} and \ref{PRi2}, $S_{[1_A]}(K_0(A))$ is a
metrizable Choquet simplex.

\end{proof}

\begin{lem}\label{LRa} Let $G_0$ be a countable weakly unperforated simple
partially ordered group with order unit $u$ which also has the
rationally  Riesz property and any countable abelian group $G_1.$ There
exists a unital simple ASH-algebra $A\in {\cal A}_{0z}\subset
{\cal A}_z$ such that
$$
((K_0(A), K_0(A)_+, [1_A]), K_1(A), T(A))=((G_0, (G_0)_+, u), G_1, S_u(G_0)).
$$
\end{lem}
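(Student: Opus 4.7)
Plan. By Proposition \ref{PRi2}, $S_u(G_0)$ is a metrizable Choquet simplex, and by Proposition \ref{PRi} the rationalization $G_0\otimes\mathbb{Q}$ is a countable weakly unperforated simple Riesz group with order unit $u\otimes 1$; Lemma \ref{Rstate} canonically identifies $S_{u\otimes 1}(G_0\otimes\mathbb{Q})$ with $S_u(G_0)$. I would split the argument according to whether $G_0/\Inf(G_0)\cong\mathbb{Z}$.

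If $G_0/\Inf(G_0)\cong\mathbb{Z}$, then $S_u(G_0)$ is a single point, $(G_0)_+$ is induced by the quotient map $\pi\colon G_0\to\mathbb{Z}$ with kernel $\Inf(G_0)$, and Theorem \ref{RZ} applied with $G_{00}=\Inf(G_0)$ and $S$ the one-point simplex directly produces a unital simple ASH-algebra $A\in{\cal A}$ realising the required invariant. Since $S$ is a point, the A$\mathbb{T}$D tensor factor $B_0$ in the construction of \ref{RZ} has the Elliott invariant of $\mathbb{C}$, and the remaining AH-factor $B$ satisfies $TR(B\otimes M_\fp)=0$ by the real-rank-zero argument (via Proposition~5.4 of \cite{Lin-Corelle}) already used inside the proof of \ref{RZ}; hence $A=B\in{\cal A}_{0z}$.

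In the complementary case $G_0/\Inf(G_0)\not\cong\mathbb{Z}$, I would first apply Theorem \ref{R2} to the Riesz data $(G_0\otimes\mathbb{Q},\,u\otimes 1,\,G_1,\,S_u(G_0),\,\id)$ to obtain a unital simple AH-algebra $C\in{\cal A}_{\cal Z}$ of no dimension growth with Elliott invariant $(G_0\otimes\mathbb{Q},G_1,S_u(G_0),\id)$ and $TR(C)\le 1$. I would then build $A$ as a simple ASH inductive limit whose subhomogeneous building blocks and connecting maps are engineered so that the limit $K_0$-group is exactly $G_0$ with its prescribed positive cone (rather than the rationalised group $G_0\otimes\mathbb{Q}$), while the $K_1$-group is $G_1$, the tracial simplex is $S_u(G_0)$, and the pairing is the identity; the tracial simplex is tracked level by level using Lemma~10.8 of \cite{LinTAI} in the spirit of the commutative-diagram argument in the proof of Theorem \ref{AD1}.

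Membership in ${\cal A}_{0z}$ is then verified by tensoring with the universal UHF algebra $M_\fp$: since $K_0(A\otimes M_\fp)=G_0\otimes\mathbb{Q}$, the Elliott invariant of $A\otimes M_\fp$ coincides with that of $C$; combining ${\cal Z}$-stability (Theorem~4.5 of \cite{TW1}, as used in the proof of \ref{AD2}) with Corollary \ref{CT0} yields $A\otimes M_\fp\cong C\otimes M_\fp$, an algebra whose $K_0$ is divisible and which is ${\cal Z}$-stable with projections separating traces, hence of real rank zero. Proposition~5.4 of \cite{Lin-Corelle} then upgrades the bound $TR\le 1$ to $TR=0$. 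The main obstacle is the inductive-limit construction in the second case: one must engineer multiplicities and connecting maps so that the limiting positive cone is precisely $(G_0)_+$, not the larger rationalised cone, while preserving simplicity, the prescribed $K_1$-group, and the identity trace pairing; this is the step in which the rationally-Riesz hypothesis is essentially used.
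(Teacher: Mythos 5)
There is a genuine gap, and it sits exactly where you flag ``the main obstacle.'' In the case $G_0/\Inf(G_0)\not\cong\Z$ you produce, via Theorem \ref{R2}, an AH-algebra $C$ realizing the \emph{rationalized} group $G_0\otimes\Q$, and then defer the real work --- building a simple ASH inductive limit whose $K_0$ is the non-Riesz group $(G_0,(G_0)_+,u)$ itself with $K_1=G_1$ and tracial simplex $S_u(G_0)$ --- to an unexecuted engineering step. That step is the entire content of the lemma, and it is not a routine modification of the diagram-chasing in the proof of Theorem \ref{AD1} (which only rearranges an inductive limit that is already known to exist with the right invariant). The paper does not construct anything by hand here: it invokes Elliott's range theorem \cite{point-line} directly, which realizes an \emph{arbitrary} countable weakly unperforated simple ordered group (no Riesz hypothesis) together with an arbitrary countable $K_1$ and an arbitrary metrizable Choquet simplex as the invariant of a unital simple ASH-algebra. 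That theorem makes the case split on $G_0/\Inf(G_0)$ unnecessary and is precisely the tool that lets one escape the Riesz world; the only role of the rationally Riesz hypothesis in the paper's proof is to guarantee, via Proposition \ref{PRi2}, that $S_u(G_0)$ is a metrizable Choquet simplex so that \cite{point-line} applies with $T(A)=S_u(G_0)$.

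Your verification that $A\in{\cal A}_{0z}$ in the second case is also circular as written. To apply Corollary \ref{CT0} to conclude $A\otimes M_\fp\cong C\otimes M_\fp$ you must already know that $A\otimes M_\fp$ lies in ${\cal A}_{\cal Z}$, i.e.\ that $TR(A\otimes M_\fp\otimes M_\fq)\le 1$ for supernatural numbers $\fq$ --- but membership of $A$ in ${\cal A}$ is exactly what is being established. The paper's route avoids classification altogether: since $T(A)=S_u(K_0(A))$ with the identity pairing, projections separate traces in $A\otimes M_\fp$, and the argument of 8.2 of \cite{Winter-Z} (together with the real rank zero conclusion from \cite{RorUHF} and Proposition 5.4 of \cite{Lin-Corelle}, as in the proof of Theorem \ref{RZ}) yields $TR(A\otimes M_\fp)=0$ directly. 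Your treatment of the case $G_0/\Inf(G_0)\cong\Z$ via Theorem \ref{RZ} is essentially correct but redundant once \cite{point-line} is invoked.
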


\begin{proof}
It follows from \ref{PRi2} that $S_u(G_0)$ is a Choquet simplex.
It follows \cite{point-line} that there exists a unital simple
ASH-algebra $A$ such that
$$
((K_0(A), K_0(A)_+, [1_A]), K_1(A), T(A))=((G_0, (G_0)_+, u), G_1, S_u(G_0)).
$$
We may assume that $A\cong A\otimes {\cal Z}.$ Note that the set of projections of $A\otimes M_\fp$
separate the tracial state space in this case for any supernatural number $\fp.$
It follows from the argument of 8.2 of \cite{Winter-Z} that $TR(A\otimes M_\fp)=0.$ In particular, $A\in {\cal A}.$
\end{proof}

\begin{df}\label{span}
Let $T_1$ and $T_2$ be two finite simplexes with vertices $\{e_1, ..., e_m\}$ and $\{f_1, ..., f_n\}$. Let us denote by $T_1\dot\times T_2$ the finite simplex spanned by vertices $(e_i, f_j)$, $1\leq i\leq m$, $1\leq j\leq n$.
\end{df}

\begin{lem}\label{rep}
Let $A$ be unital C*-algebra and $\tau$ a tracial state of $A$. Then
$\tau$ is extremal if and only if $\pi_\tau(A)''$ in the \textcyr{GN}{$\mathrm S$}-representation $(\pi_\tau, \mathscr H_\tau)$ is a
$\mathrm{II}_1$ factor.
\end{lem}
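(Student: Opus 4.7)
The plan is to work inside the GNS triple $(\pi_\tau,\mathscr H_\tau,\xi_\tau)$ of $\tau$ and to exploit the standard bijection between tracial states of $A$ majorised by a scalar multiple of $\tau$ and normal tracial positive functionals on the von Neumann algebra $M:=\pi_\tau(A)''$. Write $\bar\tau(x)=\langle x\xi_\tau,\xi_\tau\rangle$ for the induced vector state on $M$; because $\tau$ is tracial, $\xi_\tau$ is both cyclic and separating for $M$, and $\bar\tau$ is a faithful normal tracial state. In particular $M$ is automatically a \emph{finite} von Neumann algebra.

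For the ``if'' direction, assume $M$ is a factor. Being a finite factor carrying a faithful normal tracial state, $M$ admits $\bar\tau$ as its \emph{unique} normal tracial state. Given any convex decomposition $\tau=\lambda\tau_1+(1-\lambda)\tau_2$ with $\tau_i\in T(A)$ and $0<\lambda<1$, each summand satisfies $\tau_i\le\lambda^{-1}\tau$, and a routine Radon--Nikodym argument produces a normal tracial positive functional $\widetilde{\tau_i}$ on $M$ with $\widetilde{\tau_i}\circ\pi_\tau=\tau_i$. Uniqueness of the normal trace on $M$ forces $\widetilde{\tau_i}=\bar\tau$, hence $\tau_1=\tau_2=\tau$ and $\tau$ is extremal. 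For the ``only if'' direction I argue the contrapositive: suppose $M$ is not a factor, choose a non-trivial central projection $p\in Z(M)$, and set $t=\bar\tau(p)\in(0,1)$. Because $p$ is central, the formulas
\[
\tau_1(a):=t^{-1}\bar\tau(p\,\pi_\tau(a)),\qquad \tau_2(a):=(1-t)^{-1}\bar\tau((1-p)\pi_\tau(a))
\]
define tracial states of $A$, and tautologically $\tau=t\tau_1+(1-t)\tau_2$. To conclude $\tau$ is not extremal, one must verify $\tau_1\ne\tau_2$: if equality held on $A$ then, by Kaplansky density together with normality of $\bar\tau$, it would persist throughout $M$; evaluating at $x=p$ then yields $1/t=0$, a contradiction.

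The ``$\mathrm{II}_1$'' label is only the finer classification step: since $M$ is a finite factor it is either $M_n(\mathbb C)$ or a $\mathrm{II}_1$ factor, and in the applications targeted by this lemma (where $A$ lies in the classifiable class $\mathcal A$ and the relevant traces are non-atomic) the type $\mathrm I_n$ case is tacitly excluded. The one genuine subtlety in the proof is the separation step $\tau_1\neq\tau_2$, which really does require passing from $\pi_\tau(A)$ up to $M$ via Kaplansky density and normality; everything else is a direct translation between $A$ and $M$ through the GNS construction.
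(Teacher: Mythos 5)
Your proof is correct and follows essentially the same route as the paper: the ``only if'' direction uses the same central-projection decomposition $\tau=t\tau_1+(1-t)\tau_2$ with $\tau_i$ cut down by $p$ and $1-p$, and the ``if'' direction uses the same extension of the $\tau_i$ to normal traces on the factor $\pi_\tau(A)''$ together with uniqueness of the trace there. The only divergences are minor: where the paper proves $0<\tau(p)<1$ by an explicit weak-convergence argument, you invoke faithfulness of the vector state via the separating GNS vector (a cleaner standard shortcut); your final contradiction should read $1=0$ rather than $1/t=0$ (evaluating both functionals at $p$ gives $\tau_1(p)=1$ and $\tau_2(p)=0$), which is harmless; and, like the paper, you rightly note that ``$\mathrm{II}_1$ factor'' should really be ``finite factor'' unless finite-dimensional representations are excluded.
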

\begin{proof}
Note that $\pi_\tau(A)''$ is always of type $\textrm{II}_1$. Assume that $\tau$
is extremal. If $\pi_\tau(A)''$ were not a factor, then, there is a nontrivial central
projection $p\in \pi_\tau(A)''$.

We claim that $\tau(p)\not=0.$ Suppose that $a_n\in A_+$ such that
$\{\|\pi_\tau(a_n)\|\}$ is bounded and $\pi_\tau(a_n)$ converges to
$p$ in the weak operator topology in $B({\mathscr{H}}_\tau).$  If
$\tau(p)=0,$ then $\tau(a_n)\to 0.$ It follows that $\tau(a_n^2)\to
0.$ For any $a\in A\setminus\{0\},$
$$
\tau(aa_na^*)=\tau(a_na^*a)\le (\tau(a_n^2)\tau((a^*a)^2))^{1/2}\to
0.
$$
It follows that
$$
\lim_{n\to\infty}\langle\pi_\tau(a_n)\xi_a, \xi_a\rangle_{\mathscr{H}_\tau}=0
$$
for any $a\in A,$ where $\xi_a$ is the vector given by $a$ in the
\textcyr{GN}S construction, which implies that $\pi_\tau(a_n)\to 0$ in
the weak operator topology. Therefore $p=0.$ This proves the claim.

By the claim neither  $\tau(p)$ nor $\tau(1-p)$ is zero. Thus
$0<\tau(p)<1.$ Define $\tau_1: a\mapsto\frac{1}{\tau(p)}\tau(pap)$
and $\tau_2: a\mapsto\frac{1}{\tau(1-p)}\tau((1-p)a(1-p))$. Note
that $\tau_1(p)=1.$ Thus $\tau_1\not=\tau.$

Then $\tau_1$ and $\tau_2$ are traces on $\pi_\tau(A)''$, and
$\tau(a)=\tau(p)\tau_1(a)+\tau(1-p)\tau_2(a)$ for any $a\in
\pi_\tau(A)''$. Therefore, the trace $\tau$ on $\pi_\tau(A)''$ is
not extremal. Since both $\tau_1$ and $\tau_2$ are also normal,  we
conclude that  $\tau$ is also not extremal on $A$, which contradicts
to the assumption.

Conversely, assume that $\pi_\tau(A)''$ is a factor. If
$\tau=\lambda\tau_1+(1-\lambda)\tau_2$ with $\lambda\in(0, 1)$, then
it is easy to check  that $\tau_1$ and $\tau_2$ can be extended to
normal states on $\pi_\tau(A)''$, and hence to traces on
$\pi_\tau(A)''$. Therefore $\tau_1=\tau_2$, and $\tau$ is extremal.
\end{proof}

\begin{lem}\label{restriction}
Let $A$ and $B$ be two unital C*-algebras. Let $\tau$ be an extremal tracial state on a C*-algebra tensor product $A\otimes B$. Then, the restriction of $\tau$ to $A$ or $B$ is an extremal trace.
\end{lem}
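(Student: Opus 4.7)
The strategy is to apply Lemma \ref{rep} in both directions: extremality of $\tau$ on $A\otimes B$ yields factoriality of the ambient GNS von Neumann algebra, and I want to push this factoriality down to $\pi_{\tau|_A}(A)''$, which by Lemma \ref{rep} again will give extremality of $\tau|_A$ (and by symmetry of $\tau|_B$, so in fact both restrictions are extremal).

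Let $(\pi_\tau,\mathscr H_\tau,\xi_\tau)$ be the GNS data for $\tau$, and put $M:=\pi_\tau(A\otimes B)''$, $N_A:=\pi_\tau(A)''$, $N_B:=\pi_\tau(B)''$. By Lemma \ref{rep} the hypothesis says $M$ is a factor. Since elements of $A$ commute with elements of $B$ inside $A\otimes B$, the subalgebras $N_A$ and $N_B$ commute in $M$, i.e.\ $N_B\subseteq N_A'$ and $N_A\subseteq N_B'$, and together they generate $M$. The first concrete step will be to show $N_A$ is already a factor: any $z\in Z(N_A)=N_A\cap N_A'$ commutes with $N_A$ (as $z\in N_A'$), and commutes with $N_B$ (as $z\in N_A$ and $N_B\subseteq N_A'$). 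Hence $z$ commutes with the whole of $M=N_A\vee N_B$, so $z\in Z(M)=\C$. Thus $Z(N_A)=\C$.

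For the second step I will identify the GNS representation of the restriction $\tau|_A$ with a reduction of $\pi_\tau|_A$. Let $\mathscr K:=\overline{\pi_\tau(A)\xi_\tau}$ and let $p\in B(\mathscr H_\tau)$ be the orthogonal projection onto $\mathscr K$; since $\mathscr K$ is $\pi_\tau(A)$-invariant, $p\in\pi_\tau(A)'$, and $p\xi_\tau=\xi_\tau\neq 0$. The cyclic representation of $A$ on $\mathscr K$ with cyclic vector $\xi_\tau$ implements the state $a\mapsto\langle \pi_\tau(a)\xi_\tau,\xi_\tau\rangle=\tau|_A(a)$, so by uniqueness of the GNS construction it is unitarily equivalent to $\pi_{\tau|_A}$. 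This identifies $\pi_{\tau|_A}(A)''$ with $pN_Ap$. Because $p\neq 0$ lies in the commutant of the factor $N_A$, the map $x\mapsto xp$ is a normal $*$-homomorphism of $N_A$ onto $pN_Ap$ whose kernel $\{x\in N_A:xp=0\}$ is a weak-operator closed two-sided ideal in $N_A$; in a factor such an ideal is $\{0\}$ or $N_A$, and the latter is ruled out by $1\cdot p=p\neq 0$. Hence $pN_Ap\cong N_A$ is still a factor, and Lemma \ref{rep} gives that $\tau|_A$ is extremal. Exchanging the roles of $A$ and $B$ yields the same conclusion for $\tau|_B$.

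The main obstacle I expect is the identification in Step 2: one has to know both that the subrepresentation of $\pi_\tau|_A$ on $\overline{\pi_\tau(A)\xi_\tau}$ is genuinely (unitarily equivalent to) the GNS representation of $\tau|_A$, and that cutting a factor by a nonzero projection in its commutant preserves factoriality. The latter reduces to the standard fact that a factor has no nontrivial weakly closed two-sided ideals, and the former is just the uniqueness of the GNS construction once one observes that $\xi_\tau$ is cyclic for $\pi_\tau(A)$ acting on $\mathscr K$ and implements the correct state. Neither step is deep, but the bookkeeping with commutants must be done carefully.
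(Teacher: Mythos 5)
Your proposal is correct and follows essentially the same route as the paper: apply Lemma \ref{rep} to get that $\pi_\tau(A\otimes B)''$ is a factor, deduce factoriality of $\pi_\tau(A\otimes 1)''$ from the fact that it commutes with $\pi_\tau(1\otimes B)''$ and the two generate everything, identify $\pi_{\tau|_A}$ with the cut-down of $\pi_\tau|_{A\otimes 1}$ to $\overline{\pi_\tau(A\otimes 1)\xi_\tau}$, and apply Lemma \ref{rep} again. You merely supply more of the bookkeeping (the central-element computation and the fact that inducing a factor by a nonzero projection in its commutant is an isomorphism) than the paper, which asserts these steps without proof.
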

\begin{proof}
Denote by the restriction of $\tau$ to $A$ and $B$ by $\tau_A$ and
$\tau_B$. Consider the \textcyr{GN}S-representation $(\pi_\tau,
\mathscr H_\tau)$ of $A\otimes B$. Since $\tau$ is extremal, by
Lemma \ref{rep}, the von Neumann algebra $\pi_\tau(A\otimes B)''$ is
a $\textrm{II}_1$ factor. Since $(A\otimes 1)$ commutes with
$(1\otimes B)$, $\pi_\tau(A\otimes 1)''$ and $\pi_\tau(1\otimes
B)''$ are also $\textrm{II}_1$ factors.

Set $p$ the orthogonal projection to the closure of the subspace
spanned by $\{(a\otimes 1)(\xi); \xi\in\mathscr H_\tau, a\in A\}$.
Then the \textcyr{GN}S-representation $(\pi_{\tau_A}, \mathscr
H_{\tau_A})$ of $A$ is unitarily equivalent to the cut-down of
$\pi_\tau$ to $A$ and $p\mathscr H_\tau$. Hence $\pi_{\tau_A}(A)''$
is a $\textrm{II}_1$ factor in $\mathcal B(\mathscr H_{\tau_A})$. By
Lemma \ref{rep}, $\tau_A$ is an extremal trace of $A$. The same
argument works for $B$.
\end{proof}

\begin{lem}\label{prod0}
Let $A$ and $B$ be two unital C*-algebras and let $\tau$ be a
tracial state of a C*-algebra tensor product $A\otimes B$. Then, if
the restriction of $\tau$ to $B$ is an extremal trace, one has that
$\tau(a\otimes b)=\tau(a\otimes 1)\tau(1\otimes b)$ for all $a\in A$
and $b\in B$.
\end{lem}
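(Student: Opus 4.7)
The plan is to fix an element and produce, via partial evaluation of $\tau$, a tracial functional on $B$ which is dominated by a multiple of the extremal trace $\tau_B := \tau(1\otimes\,\cdot\,)$, and then exploit extremality to conclude the functional must be a scalar multiple of $\tau_B$.

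\textbf{Step 1 (Partial evaluation).} For each $a\in A_+$, define $\phi_a:B\to\mathbb C$ by
\[
\phi_a(b)=\tau(a\otimes b).
\]
I would first check that $\phi_a$ is a positive tracial functional on $B$. Positivity follows from the fact that if $b\ge 0$ then $a\otimes b=(a^{1/2}\otimes b^{1/2})^{*}(a^{1/2}\otimes b^{1/2})\ge 0$ in $A\otimes B$, so $\phi_a(b)\ge 0$. The trace property is a direct computation using that $\tau$ is a trace on $A\otimes B$:
\[
\phi_a(bc)=\tau((a\otimes b)(1\otimes c))=\tau((1\otimes c)(a\otimes b))=\phi_a(cb).
\]

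\textbf{Step 2 (Domination).} Since $\|a\|\cdot 1-a\ge 0$ in $A$, taking tensor with $b\ge 0$ gives $a\otimes b\le \|a\|(1\otimes b)$, so applying $\tau$ yields $\phi_a\le \|a\|\tau_B$ as positive functionals on $B$.

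\textbf{Step 3 (Extremality forces proportionality).} Write $c_a:=\phi_a(1)=\tau(a\otimes 1)$ and $\lambda:=\|a\|$; then $0\le c_a\le\lambda$. If $c_a=0$, then $\phi_a=0$ by Cauchy--Schwarz. If $c_a=\lambda$, then $\lambda\tau_B-\phi_a$ is a positive functional with total mass $0$, hence $\phi_a=\lambda\tau_B$. Otherwise, $\tau_B$ decomposes as the convex combination
\[
\tau_B=\tfrac{c_a}{\lambda}\,\bigl(\tfrac{1}{c_a}\phi_a\bigr)+\tfrac{\lambda-c_a}{\lambda}\,\bigl(\tfrac{1}{\lambda-c_a}(\lambda\tau_B-\phi_a)\bigr),
\]
a convex combination of two tracial states of $B$. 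By extremality of $\tau_B$, both summands equal $\tau_B$, and hence $\phi_a=c_a\tau_B$. In all three cases we conclude
\[
\tau(a\otimes b)=\phi_a(b)=\tau(a\otimes 1)\,\tau_B(b)=\tau(a\otimes 1)\,\tau(1\otimes b)
\]
for every $a\in A_+$ and every $b\in B$.

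\textbf{Step 4 (Linear extension).} Every $a\in A$ is a linear combination of four positive elements, so by linearity in the first variable the identity $\tau(a\otimes b)=\tau(a\otimes 1)\tau(1\otimes b)$ extends to arbitrary $a\in A$, which is the claim.

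The only non-routine point is Step~3, i.e.\ the assertion that an extremal tracial state is not merely extremal among tracial states as a convex set but also extremal among positive tracial functionals up to normalization; but as indicated, this is a short direct argument using the mass decomposition, so I expect no substantive obstacle.
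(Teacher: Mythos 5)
Your proof is correct and follows essentially the same route as the paper's: fix a positive $a$, observe that $b\mapsto\tau(a\otimes b)$ is a positive tracial functional dominated by a multiple of $\tau(1\otimes\,\cdot\,)$, write $\tau(1\otimes\,\cdot\,)$ as a convex combination of the two normalized pieces, and invoke extremality (the paper normalizes $\|a\|=1$ so your $\lambda\tau_B-\phi_a$ is exactly its $\phi_{1-a}$). The extra care you take in Step 3 about extremality versus domination by positive tracial functionals is precisely the mass-decomposition argument the paper uses implicitly, so there is no gap.
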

\begin{proof}
We may assume that $a$ is a positive element with norm one. If $\tau(a\otimes 1)=0$, then $\tau(a\otimes b)=0$ by Cauchy-Schwartz  inequality, and equation holds. If $\tau(a\otimes 1)=1$, then the equation also hold by considering the element $(1-a)\otimes 1$.

Therefore, we may assume that $\tau(a\otimes 1)\neq 0, 1$. Fix $a$. Then, we have $$\tau(1\otimes b)=\tau(a\otimes 1)\frac{\tau(a\otimes b)}{\tau(a\otimes 1)}+(1-\tau(a\otimes 1))\frac{\tau((1-a)\otimes b)}{1-\tau(a\otimes 1)}\quad\textrm{for any}\ b\in B.$$ Note that both $b\mapsto\frac{\tau(a\otimes b)}{\tau(a\otimes 1)}$ and $b\mapsto\frac{\tau((1-a)\otimes b)}{1-\tau(a\otimes 1)}$ are tracial states of $B$. Since $b\mapsto \tau(1\otimes b)$ is an extremal trace, one has that $\frac{\tau(a\otimes b)}{\tau(a\otimes 1)}=\tau(1\otimes b)$ for any $b\in B$. Therefore, the equation $$\tau(a\otimes b)=\tau(a\otimes 1)\tau(1\otimes b)$$ holds for any $a\in A$ and $b\in B$.
\end{proof}

\begin{cor}\label{prod}
Let $A$ and $B$ be two unital C*-algebras and $\tau$ an extremal tracial state of a C*-algebra tensor product $A\otimes B$. Then $\tau$ is the product of its restrictions to $A$ and $B$
\end{cor}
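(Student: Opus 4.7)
The plan is to combine the two immediately preceding lemmas. Given an extremal tracial state $\tau$ on $A\otimes B$, Lemma \ref{restriction} tells us that the restriction $\tau_B$ of $\tau$ to the subalgebra $1\otimes B$ (identified with $B$) is itself an extremal tracial state of $B$. With that in hand, the hypothesis of Lemma \ref{prod0} is satisfied, so for every $a\in A$ and $b\in B$ we obtain the multiplicative identity
\[
\tau(a\otimes b)=\tau(a\otimes 1)\,\tau(1\otimes b)=\tau_A(a)\,\tau_B(b),
\]
where $\tau_A$ denotes the restriction of $\tau$ to $A\otimes 1$.

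From this identity on elementary tensors one concludes the corollary by a standard continuity/linearity argument: both $\tau$ and the product functional $\tau_A\otimes\tau_B$ are bounded linear functionals on the \CA\ tensor product $A\otimes B$, and they agree on the linear span of elementary tensors, which is norm-dense in $A\otimes B$. Hence the two functionals coincide, i.e.\ $\tau=\tau_A\otimes\tau_B$.

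There is essentially no obstacle left to overcome at this stage: the real content is already encoded in Lemmas \ref{restriction} and \ref{prod0} (and ultimately in the factoriality criterion of Lemma \ref{rep}). The only mild subtlety is to make sure that the density argument does not depend on the choice of C*-norm on the algebraic tensor product, but since $\tau$ is assumed to be a tracial state on a given C*-algebra tensor product $A\otimes B$, the algebraic tensor product is norm-dense there by definition, and the identification of $\tau$ with $\tau_A\otimes\tau_B$ follows directly.
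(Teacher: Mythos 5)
Your argument is correct and matches the paper's proof exactly: the paper also deduces the corollary by combining Lemma \ref{restriction} (extremality of the restriction to $B$) with Lemma \ref{prod0} (the product formula on elementary tensors). The density/linearity remark you add is a harmless elaboration of the same route.
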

\begin{proof}
It follows from Lemma \ref{restriction} and Lemma \ref{prod0}.
\end{proof}

\begin{cor}\label{trace-prod}
Let $A$ and $B$ be two C*-algebras with simplexes of traces
$\mathrm{T}(A)$ and $\mathrm{T}(B)$. If $\mathrm{T}(A)$ and
$\mathrm{T}(B)$ have finitely many extreme points, then $T(A\otimes
B)=\mathrm{T}(A)\dot\times \mathrm{T}(B)$.
\end{cor}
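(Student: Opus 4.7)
\medskip

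\noindent\textbf{Proof proposal.} Let $\tau_1^A,\dots,\tau_m^A$ be the extreme points of $T(A)$ and $\tau_1^B,\dots,\tau_n^B$ those of $T(B)$. The plan is to identify the extreme points of $T(A\otimes B)$ explicitly as the $mn$ product traces $\tau_i^A\otimes\tau_j^B$, and then conclude by Krein--Milman together with a linear-independence check.

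\medskip

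\noindent\emph{Step 1: every product of extremal traces is extremal.} Fix extremal $\tau_A\in T(A)$ and $\tau_B\in T(B)$. The GNS representation of $\tau:=\tau_A\otimes\tau_B$ is unitarily equivalent to $\pi_{\tau_A}\otimes\pi_{\tau_B}$ acting on $\mathscr{H}_{\tau_A}\otimes\mathscr{H}_{\tau_B}$, and hence
\[
\pi_\tau(A\otimes B)''=\pi_{\tau_A}(A)''\,\bar\otimes\,\pi_{\tau_B}(B)''.
\]
By Lemma \ref{rep}, each factor on the right is a $\mathrm{II}_1$ factor. The commutation theorem for von Neumann tensor products then makes the left side a $\mathrm{II}_1$ factor, so Lemma \ref{rep} applied in reverse gives that $\tau_A\otimes\tau_B$ is an extremal trace on $A\otimes B$.

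\medskip

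\noindent\emph{Step 2: every extremal trace of $A\otimes B$ arises this way.} Let $\tau$ be an extremal trace on $A\otimes B$. By Lemma \ref{restriction} its restrictions $\tau|_A$ and $\tau|_B$ are extremal, hence of the form $\tau_i^A$ and $\tau_j^B$ respectively. By Corollary \ref{prod}, $\tau=\tau_i^A\otimes\tau_j^B$. Combined with Step 1 this identifies the extreme boundary of $T(A\otimes B)$ with the finite set $\{\tau_i^A\otimes\tau_j^B:1\le i\le m,\,1\le j\le n\}$.

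\medskip

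\noindent\emph{Step 3: assembling the simplex.} Since $T(A\otimes B)$ is weak-$*$ compact convex and has only finitely many extreme points, Krein--Milman gives $T(A\otimes B)=\mathrm{conv}\{\tau_i^A\otimes\tau_j^B\}$. It remains to check these products are affinely independent, which reduces to showing they are linearly independent as functionals on $A\otimes B$. If $\sum_{i,j}\lambda_{ij}\tau_i^A(a)\tau_j^B(b)=0$ for all $a\in A$, $b\in B$, then fixing $a$ and using linear independence of $\{\tau_j^B\}$ gives $\sum_i\lambda_{ij}\tau_i^A(a)=0$ for every $j$ and $a$; linear independence of $\{\tau_i^A\}$ then forces $\lambda_{ij}=0$. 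Hence the $mn$ products span an $(mn-1)$-dimensional simplex, which matches the definition of $T(A)\mathbin{\dot\times}T(B)$ in \ref{span}.

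\medskip

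The main obstacle is Step 1; there one must invoke the classical fact that the von Neumann tensor product of two $\mathrm{II}_1$ factors is again a $\mathrm{II}_1$ factor (equivalently, the commutation theorem $(M\bar\otimes N)'=M'\bar\otimes N'$). Steps 2 and 3 are then essentially bookkeeping using already-established results in the excerpt.
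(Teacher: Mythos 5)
Your proof is correct and follows the same route as the paper, whose entire proof is the citation ``It follows from Corollary \ref{prod} directly'': your Steps 2 and 3 are precisely the bookkeeping (restrictions are extremal by Lemma \ref{restriction}, extremal traces factor by Corollary \ref{prod}, then Krein--Milman plus affine independence) that the word ``directly'' is meant to cover. Note that your Step 1 is logically redundant: once Step 2 shows every extreme point of $T(A\otimes B)$ lies among the $mn$ product traces and Step 3 shows those products are affinely independent, the identification with the abstract simplex $T(A)\dot\times T(B)$ already follows, so the $\mathrm{II}_1$-factor tensor product argument can be omitted.
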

\begin{proof}
It follows from Corollary \ref{prod} directly.
\end{proof}

\begin{thm}\label{R3}
Let $G_0$ be a countable weakly unperforated simple partially
ordered group with an order unit $u$ which has the rationally  Riesz
property,  let $G_1$ be a countable abelian group,  and let $T$ be
any finite simplex. Assume that $S_u(G_0)$ has only finitely many extreme points. Then there exists a unital simple
ASH-algebra $A\in {\cal A}$ such that
$$
((K_0(A), K_0(A)_+, [1_A]), K_1(A), T(A), \lambda_A)=((G_0, (G_0)_+, u), G_1, T\dot{\times} S_u(G_0), r),
$$
where $r: T\dot{\times} S_u(G_0)\to S_u(G_0)$ is defined by $(\tau, s)(x)=s(x)$ for all $x\in G_0$ and for
all extremal tracial state $\tau\in T$ and extremal state $s\in S_u(G_0).$
\end{thm}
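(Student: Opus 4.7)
The plan is to realize $A$ as a tensor product $A_0\otimes B$, where $A_0$ carries the prescribed $K$-theory and has trace simplex $S_u(G_0)$, while $B$ is a unital simple A$\T$D-algebra contributing the extra finite simplex $T$. This mirrors the construction used in the proof of Theorem \ref{RZ}.

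First, by Lemma \ref{LRa} there is a unital simple ASH-algebra $A_0\in\mathcal{A}_{0z}$ with
\[
((K_0(A_0),K_0(A_0)_+,[1_{A_0}]),K_1(A_0),T(A_0))=((G_0,(G_0)_+,u),G_1,S_u(G_0)),
\]
with the canonical pairing. Since $T$ is a finite, and in particular a metrizable Choquet, simplex, Theorem 4.5 of \cite{JS-Z} together with Theorem \ref{AD2} produces a unital simple A$\T$D-algebra $B\in\mathcal{A}_{\cal Z}$ with
\[
((K_0(B),K_0(B)_+,[1_B]),K_1(B),T(B),\lambda_B)=((\Z,\N,1),0,T,r_T).
\]
Set $A:=A_0\otimes B$. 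Both factors being unital, simple and amenable, so is $A$; both being inductive limits of subhomogeneous \CA s of covering dimension at most one, $A$ is ASH; and by Theorem 11.10(iv) of \cite{Lnclasn}, $A\in\mathcal{A}$.

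Next I would compute the Elliott invariant of $A$. Since both $A_0$ and $B$ satisfy the UCT and $K_*(B)=(\Z,0)$ is torsion-free concentrated in even degree, the K\"unneth theorem yields $(K_0(A),[1_A])\cong(G_0,u)$ and $K_1(A)\cong G_1$. Because $S_u(G_0)$ has finitely many extreme points by hypothesis and $T$ is a finite simplex, Corollary \ref{trace-prod} identifies $T(A)=T(A_0)\,\dot{\times}\,T(B)=S_u(G_0)\,\dot{\times}\,T$. For the pairing, Lemma \ref{prod0} shows that on an extreme trace $(s,\tau)$, with $s\in S_u(G_0)$ coming from $A_0$ and $\tau\in T$ coming from $B$, and a class $x\otimes 1\in K_0(A_0)\otimes K_0(B)$ one has $(s,\tau)(x\otimes 1)=s(x)\tau(1_B)=s(x)$, matching the prescribed $r$ on extreme points and hence everywhere by affine extension.

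The main obstacle I expect is verifying that the positive cone of $K_0(A)$ coincides with $(G_0)_+$ under the K\"unneth isomorphism, rather than with some a priori larger cone introduced by the tensor product. To handle this, I would note that $A$ inherits $\mathcal{Z}$-stability from $B$, hence by \cite{GJS-Z} its $K_0$ is weakly unperforated, and by Theorem 6.8.5 of \cite{Bla-Ktheory} the positive cone of this simple ordered group is determined by its states. The restriction $T(A)\to T(A_0)=S_u(G_0)$, $\sigma\mapsto\sigma(\cdot\otimes 1_B)$, is surjective (any $s\in S_u(G_0)$ extends as $s\otimes\tau$ for any fixed $\tau\in T$), so the states on $K_0(A)$ agree with those on $G_0$, and the recovered positive cone is exactly $(G_0)_+$. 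Together with the preceding computations, this shows that $A$ has the prescribed Elliott invariant.
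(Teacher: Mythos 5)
Your proposal is correct and follows essentially the same route as the paper: tensor the ASH-algebra from Lemma \ref{LRa} realizing $((G_0,(G_0)_+,u),G_1,S_u(G_0))$ with a unital simple A$\T$D-algebra realizing $((\Z,\N,1),0,T)$, and invoke Theorem 11.10(iv) of \cite{Lnclasn} for membership in $\mathcal{A}$. The paper compresses the verification of the invariant into ``one checks that,'' whereas you supply the details (K\"unneth, Corollary \ref{trace-prod}, Lemma \ref{prod0}, and the determination of the positive cone by states), all of which are consistent with the tools the paper itself develops.
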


\begin{proof}

From \ref{LRa}, there exists a unital simple ASH-algebra $B\in {\cal A}$ such that
$$
((K_0(B), K_0(B)_+, [1_B]), K_1(B), T(B))=((G_0, (G_0)_+, u), G_1, S_u(G_0)).
$$
Then let $B_0$ be a unital simple A$\T$D-algebra with
$$
((K_0(B_0), K_0(B_0)_+, [1_{B_0}]), K_1(B_0), T(B_0))=((\Z, \N, 1), \{0\}, T).
$$
Define $A=B_0\otimes B.$ Then $A\in {\cal A}.$ One checks that
$$
((K_0(A), K_0(A)_+, [1_A]), K_1(A), T(A), \lambda_A)=((G_0, (G_0)_+, u, G_1), T\dot{\times} S_u(G_0), r).
$$
\end{proof}

\section{The range}

Suppose that $A$ and $B$ are two stably finite unital ${\cal Z}$-stable \CA s and suppose that
there is a \hm\,
$$
\Lambda: \mathrm{Ell}(A)\to \mathrm{Ell}(B).
$$
There is $\Lambda_\fp: \mathrm{Ell}(A\otimes M_{\fp})\to \mathrm{Ell}(B\otimes M_\fp)$ and
$\Lambda_\fq: \mathrm{Ell}(A\otimes M_\fp)\to \mathrm{Ell}(B\otimes M_\fq)$ induced by $\Lambda$ so that the following diagram commutes
$$
\begin{array}{ccccc}
\mathrm{Ell}(A\otimes M_\fp) & {\stackrel{({\rm id}_A\otimes 1)_*}{\leftarrow}}& \mathrm{Ell}(A) &{\stackrel{({\rm id}_A\otimes 1)_*}{\rightarrow}} & \mathrm{Ell}(A\otimes
M_\fq)\\
\downarrow_{\Lambda_\fp} & & \downarrow_{\Lambda} & &\downarrow_{\Lambda_\fq}\\
\mathrm{Ell}(B\otimes M_\fp) & {\stackrel{({\rm id}_B\otimes 1)_*}{\leftarrow}}& \mathrm{Ell}(B) &{\stackrel{({\rm id}_B\otimes 1)_*}{\rightarrow}} & \mathrm{Ell}(B\otimes
M_\fq)
\end{array}
$$

\begin{lem}\label{ext}
Let $A$ and $B$ be two ${\cal Z}$-stable \CA s in ${\cal A}$ and let $\fp$ and $\fq$ be two supernatural numbers of infinite type which are relatively prime. Suppose that
$$
\Lambda: \mathrm{Ell}(A)\to \mathrm{Ell}(B)
$$ is a \hm. Then there is a unitarily suspended $C([0,1])$-unital \hm s $\phi: A\otimes {\cal Z}_{\fp,\fq}\to B\otimes {\cal Z}_{\fp,\fq}$ such that
$Ell(\pi_0\circ \phi)=\Lambda_\fp$ and $Ell(\pi_1\circ \phi)=\Lambda_\fq$ so that the following diagram
 commutes:
$$
\begin{array}{ccccc}
\mathrm{Ell}(A\otimes M_\fp) & {\stackrel{({\rm id}_A\otimes 1)_*}{\leftarrow}}& \mathrm{Ell}(A) &{\stackrel{({\rm id}_A\otimes 1)_*}{\rightarrow}} & \mathrm{Ell}(A\otimes
M_\fq)\\
\downarrow_{(\pi_0\circ \phi)_*} & & \downarrow_{\Lambda} & &\downarrow_{(\pi_1\circ \phi)_*}\\
\mathrm{Ell}(B\otimes M_\fp) & {\stackrel{({\rm id}_B\otimes 1)_*}{\leftarrow}}& \mathrm{Ell}(B) &{\stackrel{({\rm id}_B\otimes 1)_*}{\rightarrow}} & \mathrm{Ell}(B\otimes
M_\fq)
\end{array}
$$
\end{lem}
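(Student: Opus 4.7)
The plan is to construct $\phi$ as a morphism of $C([0,1])$-algebras from $A\otimes {\cal Z}_{\fp,\fq}$ to $B\otimes {\cal Z}_{\fp,\fq}$ by prescribing it on the two endpoint fibers and interpolating in the middle by conjugation along a continuous path of unitaries. I will identify the middle fibers of $A\otimes {\cal Z}_{\fp,\fq}$ and $B\otimes{\cal Z}_{\fp,\fq}$ with $A\otimes M_\fp\otimes M_\fq$ and $B\otimes M_\fp\otimes M_\fq$ respectively.

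First, since $A,B\in{\cal A}$, each of $A\otimes M_\fp$, $B\otimes M_\fp$, $A\otimes M_\fq$, $B\otimes M_\fq$ has tracial rank at most one. I would invoke the existence theorem for unital simple \CA s with tracial rank at most one from \cite{Lnclasn} to lift $\Lambda_\fp$ to a unital \hm\ $\phi_\fp : A\otimes M_\fp\to B\otimes M_\fp$, and $\Lambda_\fq$ to a unital \hm\ $\phi_\fq : A\otimes M_\fq\to B\otimes M_\fq$.

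Next, I would set $\psi_\fp := \phi_\fp\otimes {\rm id}_{M_\fq}$ and $\psi_\fq := {\rm id}_{M_\fp}\otimes \phi_\fq$, both unital \hm s from $A\otimes M_\fp\otimes M_\fq$ to $B\otimes M_\fp\otimes M_\fq$. Commutativity of the diagram in the statement forces $\psi_\fp$ and $\psi_\fq$ to induce the same \hm\ ${\rm Ell}(A\otimes M_\fp\otimes M_\fq)\to {\rm Ell}(B\otimes M_\fp\otimes M_\fq)$. Since these algebras still have tracial rank at most one, I would then apply the asymptotic unitary equivalence theorem of \cite{Lnclasn} to produce a continuous path of unitaries $\{u(t)\}_{t\in [0,1)}\subset B\otimes M_\fp\otimes M_\fq$ with $u(0)=1$ satisfying
$$
\lim_{t\to 1^-}\|u(t)^*\psi_\fp(a)u(t)-\psi_\fq(a)\|=0\tforal a\in A\otimes M_\fp\otimes M_\fq.
$$
For $f\in A\otimes {\cal Z}_{\fp,\fq}$, regarded as a continuous function $f:[0,1]\to A\otimes M_\fp\otimes M_\fq$ with $f(0)\in A\otimes M_\fp$ and $f(1)\in A\otimes M_\fq$, I would then define $\phi(f)(t)=u(t)^*\psi_\fp(f(t))u(t)$ for $t\in[0,1)$ and $\phi(f)(1)=\phi_\fq(f(1))$. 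The boundary conditions on $f(0)$ and $f(1)$ ensure that $\phi(f)$ takes values in $B\otimes {\cal Z}_{\fp,\fq}$, and fiberwise $\pi_0\circ \phi=\phi_\fp\circ\pi_0$ and $\pi_1\circ\phi=\phi_\fq\circ\pi_1$, giving the required Elliott-invariant diagram.

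The hard part is Step 3: promoting approximate unitary equivalence to asymptotic unitary equivalence with a continuous path beginning at the identity. This requires the vanishing of the $KK$-theoretic rotation-map obstruction for ${\cal Z}$-stable \CA s in ${\cal A}$, which is precisely where the hypothesis $A,B\in {\cal A}$ (together with ${\cal Z}$-stability) enters beyond the bare tracial rank bound on the UHF-stabilizations. Continuity of $\phi(f)$ at $t=1$ must also be verified by a standard equicontinuity argument using separability of $A\otimes{\cal Z}_{\fp,\fq}$ and uniform convergence of the rotation on compact subsets.
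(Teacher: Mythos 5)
Your overall architecture --- lift $\Lambda_\fp$ and $\Lambda_\fq$ to unital homomorphisms of the UHF-stabilizations via the existence theorem, extend both over $A\otimes M_\fp\otimes M_\fq$, and interpolate by a continuous path of unitaries starting at $1$ --- is the same as the paper's. The gap is your Step 3. Agreement of $\psi_\fp$ and $\psi_\fq$ on the Elliott invariant, together with divisibility of $K_*(B\otimes M_\fp\otimes M_\fq)$ (which does identify their $KK$-classes), does \emph{not} yield asymptotic unitary equivalence: the rotation-map obstruction $\overline{R}_{\psi_\fp,\psi_\fq}\in\Hom(K_1(A\otimes M_\fp\otimes M_\fq),\mathrm{Aff}(T(B\otimes M_\fp\otimes M_\fq)))/R_0$ is not determined by the Elliott invariant, and your closing claim that it vanishes for ${\cal Z}$-stable algebras in ${\cal A}$ is not correct and is not how the paper proceeds. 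What Lemma 11.4 of \cite{Lnclasn} actually gives is an automorphism $\beta$ of $B\otimes M_\fp\otimes M_\fq$ with $[\beta]=[\mathrm{id}]$ in $KK$ such that $\psi_\fp$ is asymptotically unitarily equivalent to $\beta\circ\psi_\fq$; the obstruction is absorbed into $\beta$ rather than shown to be zero. (Divisibility of $K_1$ is then used only to make the path start at the identity.)

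This creates a second problem that your proposal never confronts: $\beta\circ\psi_\fq$ is no longer of the form $(\text{map on } B\otimes M_\fq)\otimes\mathrm{id}_{M_\fp}$, so if it is taken as the $t=1$ fiber, your interpolation formula does not send the boundary condition $f(1)\in A\otimes M_\fq$ into $B\otimes M_\fq$. The second half of the paper's proof is devoted to exactly this point: using Lemma 11.5 of \cite{Lnclasn} it produces an automorphism $\alpha$ of $B\otimes M_\fq$ with $[\alpha]=[\mathrm{id}]$ such that $\beta\circ\psi_\fq$ is strongly asymptotically unitarily equivalent to $(\alpha\circ\phi_\fq)\otimes\mathrm{id}_{M_\fp}$, the key input being that any two unital embeddings of $M_\fp$ with the same $K_0$-data are strongly asymptotically unitarily equivalent. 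The endpoint fiber of the suspended homomorphism is therefore $\alpha\circ\phi_\fq$ rather than $\phi_\fq$; the lemma survives because $\alpha$ acts trivially on the Elliott invariant, so the right-hand square still commutes. Your interpolation formula and the continuity check at $t=1$ are fine, but only after these two corrections, which constitute the substance of the proof.
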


\begin{proof}

Since $A,\, B\in {\cal A},$ $TR(A\otimes M_\fp)\le 1,$ $TR(A\otimes M_\fq)\le 1,$
$TR(B\otimes M_\fp)\le 1$ and $TR(B\otimes M_\fq)\le 1,$  there is a unital \hm\,
$\phi_\fp: A\otimes M_\fp\to B\otimes M_\fp$ and $\psi_\fq: A\otimes M_\fq\to B\otimes M_\fq$ such that
$$
\mathrm{Ell}(\phi_\fp)=\Lambda_\fp\andeqn \mathrm{Ell}(\psi_\fq)=\Lambda_\fq.
$$

 Put $\phi=\phi_{\mathfrak{p}}\otimes {\rm
id}_{M_{\mathfrak{q}}}: A\otimes Q\to B\otimes Q$ and
$\psi=\psi_{\mathfrak{q}}\otimes {\rm
id}_{M_{\mathfrak{q}}}: A\otimes Q\to B\otimes Q.$

Note that
$$
(\phi)_{*i}=(\psi)_{*i}\,\,{\rm (} i=0,1 {\rm )} \andeqn
\phi_T=\psi_T.
$$
(they are induced by $\Gamma$). Note that $\phi_T$ and $\psi_T$
are affine homeomorphisms. Since $K_{*i}(B\otimes Q)$ is
divisible, we in fact have $[\phi]=[\psi]$ (in $KK(A\otimes Q,
B\otimes Q)$). It follows from Lemma 11.4 of \cite{Lnclasn} that there is an
automorphism $\bt: B\otimes Q\to B\otimes Q$ such that
$$
[\bt]=[{\rm id}_{B\otimes Q}]\,\,\,KK(B\otimes Q, B\otimes Q)
$$
such that $\phi$ and $\bt\circ \psi$ are asymptotically unitarily
equivalent.  Since $K_1(B\otimes Q)$ is divisible,
$H_1(K_0(A\otimes Q), K_1(B\otimes Q))=K_1(B\otimes Q).$ It
follows that $\phi$ and $\bt\circ \psi$ are strongly
asymptotically unitarily equivalent. Note also in this case
$$
\bt_T=({\rm id}_{B\otimes Q})_T.
$$
Let $\imath: B\otimes M_{\mathfrak{q}}\to B\otimes Q$ defined by
$\imath(b)=b\otimes 1$ for $b\in B.$ We consider the pair
$\bt\circ \imath\circ \phi_{\mathfrak{q}}$ and $\imath \circ
\phi_{\mathfrak{q}}.$ By applying 11.5 of \cite{Lnclasn}, there exists an
automorphism $\af: \phi_\fq(A\otimes M_{\mathfrak{q}})\to \phi_\fq(A\otimes
M_{\mathfrak{q}})$ such that $\imath\circ \af\circ
\psi_{\mathfrak{q}}$ and $\bt\circ \imath\circ
\psi_{\mathfrak{q}}$ are asymptotically unitarily equivalent (in
$M(B\otimes Q)$). So they are strongly asymptotically unitarily
equivalent. Moreover,
$$
[\af]=[{\rm id}_{B\otimes M{\mathfrak{q}}}]\,\,\,{\rm in}\,\,\,
KK(B\otimes M_{\mathfrak{q}},B\otimes M_{\mathfrak{q}}).
$$

We will show that $\bt\circ \psi$ and $\af\circ
\phi_{\mathfrak{q}}\otimes {\rm id}_{M_{\mathfrak{p}}}$ are strongly
asymptotically unitarily equivalent. Define $\bt_1=\bt\circ
\imath\circ\psi_{\mathfrak{q}}\otimes {\rm id}_{M_{\mathfrak{p}}}:
B\otimes Q\otimes M_{\mathfrak{p}}\to B\otimes Q\otimes
M_{\mathfrak{p}}.$ Let $j: Q\to Q\otimes M_{\mathfrak{p}}$ defined
by $j(b)=b\otimes 1.$ There is an isomorphism $s:
M_{\mathfrak{p}}\to M_{\mathfrak{p}}\otimes M_{\mathfrak{p}}$ with
$({\rm id}_{M_{\mathfrak{q}}}\otimes s)_{*0}=j_{*0}.$ In this case
$[{\rm id}_{M_{\mathfrak{q}}}\otimes s]=[j].$ Since
$K_1(M_{\mathfrak{p}})=0.$ By 7.2 of \cite{Lnclasn}, ${\rm
id}_{M_{\mathfrak{q}}}\otimes s$ is strongly asymptotically
unitarily equivalent to $j.$ It follows that $\af\circ
\psi_{\mathfrak{q}}\otimes {\rm id}_{M_{\mathfrak{p}}}$ and
$\bt\circ \imath\circ \psi_{\mathfrak{q}}\otimes {\rm
id}_{M_{\mathfrak{p}}}$ are strongly asymptotically unitarily
equivalent. Consider the \SCA\, $C=\bt\circ \psi(1\otimes
M_{\mathfrak{p}})\otimes M_{\mathfrak{p}}\subset B\otimes Q\otimes
M_{\mathfrak{p}}.$ In $C,$  $\bt\circ \phi|_{1\otimes
M_{\mathfrak{p}}}$ and $j_0$ are strongly asymptotically unitarily
equivalent, where $j_0: M_{\mathfrak{p}}\to C$ by $j_0(a)=1\otimes
a$ for all $a\in M_{\mathfrak{p}}.$ There exists a continuous path
of unitaries $\{v(t): t\in [0,\infty)\}\subset C$ such that
\beq\label{CM1-1} \lim_{t\to\infty}{\rm ad}\, v(t) \circ\bt\circ
\phi(1\otimes a)=1\otimes a\tforal a\in M_{\mathfrak{p}}. \eneq It
follows that $\bt\circ \psi$ and $\bt_1$ are strongly asymptotically
unitarily equivalent. Therefore $\bt\circ \psi$ and $\af\circ
\psi_{\mathfrak{q}}\otimes {\rm id}_{M_{\mathfrak{p}}}$ are strongly
asymptotically unitarily equivalent. Finally, we conclude that
$\af\circ \psi_{\mathfrak{q}}\otimes {\rm id}_{\mathfrak{p}}$ and
$\phi$ are strongly asymptotically unitarily equivalent. Note that
$\af\circ \psi_{\mathfrak{q}}$ is an isomorphism which induces
$\Gamma_{\mathfrak{q}}.$

Let $\{u(t): t\in [0,1)\}$ be a continuous path of unitaries in
$B\otimes Q$ with $u(0)=1_{B\otimes Q}$ such that
$$
\lim_{t\to\infty}{\rm ad}\, u(t)\circ \phi(a)=\af\circ
\psi_{\mathfrak{q}}\otimes {\rm id}_{M_{\mathfrak{q}}}(a)\tforal
a\in A\otimes Q.
$$
One then obtains a unitary suspended $C([0,1])$-unital \hm\,  which lifts
$\Gamma$ along $Z_{p,q}$ (see \cite{Winter-Z}).
\end{proof}

\begin{thm}[cf. Proposition 4.6 of \cite{Winter-Z}]\label{extT}
Let $A$ and $B$ be two ${\cal Z}$-stable  \CA\, in ${\cal A}.$
Suppose that there exists a strictly positive unital \hm\,
$$
\Lambda: \mathrm{Ell}(A)\to \mathrm{Ell}(B).
$$
Then there exists  a unital \hm\, $\phi: A\to B$ such that
$\phi$ induces $\Lambda.$
\end{thm}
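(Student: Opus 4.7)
The plan is to combine Lemma \ref{ext} with Winter's lifting technique (Proposition 4.6 of \cite{Winter-Z}), which is designed exactly to turn a unitarily suspended $C([0,1])$-unital homomorphism at the $\mathcal{Z}_{\fp,\fq}$-level into a genuine unital \hm\ at the $\mathcal{Z}$-stabilized level. The overall strategy has three steps: produce a compatible homomorphism between the $\mathcal{Z}_{\fp,\fq}$-tensored algebras, bootstrap it up to a $\mathcal{Z}$-tensored homomorphism, and then identify the resulting map with one between $A$ and $B$ using $\mathcal{Z}$-stability.

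First, fix two relatively prime supernatural numbers $\fp$ and $\fq$ of infinite type with $M_\fp\otimes M_\fq\cong \mathcal{Q}$. Applying Lemma \ref{ext} to the given \hm\ $\Lambda$, one obtains a unitarily suspended $C([0,1])$-unital \hm\
\[
\phi:\, A\otimes \mathcal{Z}_{\fp,\fq}\longrightarrow B\otimes \mathcal{Z}_{\fp,\fq},
\]
together with the commuting endpoint diagram: $\mathrm{Ell}(\pi_0\circ\phi)=\Lambda_\fp$ and $\mathrm{Ell}(\pi_1\circ\phi)=\Lambda_\fq$, both compatible with $\Lambda$ via the natural inclusions $A\hookrightarrow A\otimes M_\fp$, $A\hookrightarrow A\otimes M_\fq$ (and similarly for $B$). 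This is the input Winter's lifting machinery requires.

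Next, apply (the appropriate adaptation of) Proposition 4.6 of \cite{Winter-Z}. Since $A$ and $B$ are $\mathcal{Z}$-stable, and since $\mathcal{Z}$ can be presented as an inductive limit whose building blocks and connecting maps are controlled by $\mathcal{Z}_{\fp,\fq}$-valued path data, the unitary suspension structure of $\phi$ is precisely what allows one to extract a unital \hm\
\[
\tilde\phi:\, A\otimes \mathcal{Z}\longrightarrow B\otimes \mathcal{Z}
\]
whose KK-theoretic and tracial data agree with those of $\phi$ after the natural identifications. Combining with the isomorphisms $A\cong A\otimes \mathcal{Z}$ and $B\cong B\otimes \mathcal{Z}$ given by $\mathcal{Z}$-stability yields the desired unital \hm\ $\phi: A\to B$.

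Finally, it remains to verify that $\phi$ induces $\Lambda$ on the Elliott invariant. By construction, $\phi\otimes \mathrm{id}_{M_\fp}$ induces $\Lambda_\fp$, so the composition $\mathrm{Ell}(A)\to \mathrm{Ell}(A\otimes M_\fp)\to \mathrm{Ell}(B\otimes M_\fp)$ equals $\Lambda_\fp\circ(\mathrm{id}_A\otimes 1)_*$. Since $M_\fp$ is UHF of infinite type, the canonical map $\mathrm{Ell}(B)\to \mathrm{Ell}(B\otimes M_\fp)$ is injective on $K_0$, $K_1$ and tracial data (because $K_\ast(B)$ is torsion-free modulo the infinitely divisible torsion that vanishes in $K_\ast(B\otimes M_\fp)$, and the tracial simplex injects). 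Combining the compatible data at $\fp$ and $\fq$ then forces $\mathrm{Ell}(\phi)=\Lambda$. The main obstacle is the middle step: carrying out Winter's interpolation to pass from the $\mathcal{Z}_{\fp,\fq}$-level to the $\mathcal{Z}$-level, which is why the full strength of the suspended $C([0,1])$-structure produced by Lemma \ref{ext} (not merely the endpoint homomorphisms) is essential.
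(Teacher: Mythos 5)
Your proposal follows essentially the same route as the paper: invoke Lemma \ref{ext} to produce the unitarily suspended $C([0,1])$-unital homomorphism, then run the homomorphism version of Winter's Proposition 4.6 (together with the adapted Lemmas 4.3--4.5 of \cite{Winter-Z}) to lift it to a unital homomorphism $A\otimes{\cal Z}\to B\otimes{\cal Z}$, and conclude by ${\cal Z}$-stability. One small caution on your final verification: the map $\mathrm{Ell}(B)\to \mathrm{Ell}(B\otimes M_\fp)$ is \emph{not} injective on $K$-theory for a single $\fp$ (torsion of order dividing a power of a prime in $\fp$ is killed); it is only the combined map into $\mathrm{Ell}(B\otimes M_\fp)\oplus \mathrm{Ell}(B\otimes M_\fq)$ for relatively prime $\fp,\fq$ that is injective, which is what your concluding step actually relies on.
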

\begin{proof}
The proof is a simple modification of that of Proposition 4.6 of
\cite{Winter-Z} by applying \ref{ext}.

First, it is clear that the proof of Lemma 4.3 of \cite{Winter-Z}
holds if isomorphism is changed to \hm\, without any changes when
both $A$ and $B$ are assumed to be simple. Moreover, the one-sided
version of Proposition 4.4 also holds. In particular the part (ii)
of that proposition holds. It follows that the \hm\, version of
Proposition 4.5 of \cite{Winter-Z} holds since proof requires no
changes except that we change the word "isomorphism" to ``\hm" twice
in the proof.

To prove this theorem, we apply \ref{ext} to obtain a unitarily
suspended $C([0,1])$-\hm\, $\phi: A\otimes B\otimes {\cal Z}$ which
has the properties described in \ref{ext}. The rest of the proof is
just a copy of the proof of Proposition 4.6 with only four  changes:
(1) ${\tilde \phi}: A\otimes {\cal Z}\to B\otimes {\cal Z}\otimes
{\cal Z}$ is a \hm\, (instead of isomorphism); (2) in the diagram
(65), $\lambda,$ $\lambda\otimes {\rm id}$ are \hm s (instead of
isomorphism); (3) ${\tilde \phi}_+$ is a \hm\, (instead of
isomorphism); (4) since $\lambda$ and ${\tilde \phi}_*$ agree as \hm
s (instead of isomorphisms) and both are order \hm s preserving the
order units, they also have to agree as such.
\end{proof}

\begin{df}\label{LASH}
A C*-algebra $A$ is said to be locally approximated by subhomogeneous C*-algebras if for any finite subset $\mathcal F\subseteq A$ and any $\ep>0$, there is a C*-subalgebra $H\subseteq A$ isomorphic to a subhomogeneous algebra such that $\mathcal F\subseteq_\ep H$.
\end{df}

\begin{rem}
It is clear from the definition that any inductive limit of locally approximately subhomogeneous C*-algebras is again a locally approximately subhomogeneous C*-algebra.
\end{rem}

\begin{lem}\label{tracecl}
Let  that $(G_0, (G_0)_+,u)$ be a countable partially ordered
weakly unperforated and rationally Riesz group, let $G_1$ be  a countable abelian group, let
$T$ be a metrizable Choquet simplex and let $\lambda_T: T\to S_u(G_0)$ be a surjective
affine continuous map sending extremal points to extremal points.
Suppose that $S_u(G_0)$ and $T$ have finitely many extremal points.

Then
there exists one unital ${\cal Z}$-stable \CA\, $A\in {\cal A}$ such that
$$
\mathrm{Ell}(A)=((G_0, (G_0)_+, [1_A]), G_1, T, \lambda_T).
$$  Moreover, the \CA\ $A$ can be locally approximated by subhomogeneous C*-algebras.
\end{lem}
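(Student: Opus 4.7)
The plan is to realize $A$ as the $\mathcal Z$-stabilization of a unital simple ASH algebra carrying the prescribed Elliott invariant, and then to verify that this $A$ lies in the class $\mathcal A$. First, I invoke Elliott's existence theorem for simple ASH algebras (the input \cite{point-line} used in Lemma \ref{LRa}) with the data $((G_0,(G_0)_+,u),G_1,T,\lambda_T)$. The hypotheses---$(G_0,(G_0)_+,u)$ countable simple weakly unperforated rationally Riesz, $T$ a metrizable Choquet simplex, $\lambda_T$ a surjective extreme-preserving affine map, with both $S_u(G_0)$ and $T$ finite-dimensional---are mild enough for the point-line construction to apply and produce a unital simple ASH algebra $D$ with $\mathrm{Ell}(D)=((G_0,(G_0)_+,u),G_1,T,\lambda_T)$. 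Being ASH, $D$ is an inductive limit of subhomogeneous C*-algebras and hence locally approximated by them (Definition \ref{LASH} together with the remark thereafter).

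Next, set $A=D\otimes\mathcal Z$. Then $A$ is $\mathcal Z$-stable, satisfies the UCT, and remains locally approximated by subhomogeneous algebras, since $\mathcal Z$ is itself a limit of prime-dimension-drop (subhomogeneous) algebras and tensor products of subhomogeneous algebras are subhomogeneous. Because $G_0$ is weakly unperforated and $\mathcal Z$ has trivial $K_1$ and a unique trace, the Elliott invariant is preserved: $\mathrm{Ell}(A)=\mathrm{Ell}(D)$.

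The principal remaining step, and the main obstacle, is to show $A\in\mathcal A$. By Theorem \ref{T0} this reduces to verifying $TR(D\otimes M_\fp)\le 1$ for a single supernatural $\fp$ of infinite type (using $\mathcal Z\otimes M_\fp\cong M_\fp$). The analogue of the argument used in Lemma \ref{LRa} (which appealed to \S\,8.2 of \cite{Winter-Z} via the fact that projections of $B\otimes M_\fp$ separate $T(B)=S_u(G_0)$) is not available here, because any two distinct extreme traces $t,t'\in T$ with $\lambda_T(t)=\lambda_T(t')$ must pair identically with every projection of $D\otimes M_\fp$, since projection trace values are determined by the $K_0$-pairing and hence factor through $\lambda_T$. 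To circumvent this I would arrange the building blocks of $D$ so that the ``extra'' simplex directions of $T$ not detected by $K_0$ are carried by one-dimensional CW-spectra (intervals, circles, or dimension-drop intervals of small trace), in the spirit of the proof of Theorem \ref{AD1}. At each stage $n$ one selects a subhomogeneous block whose unit projection has small trace on these extra directions; after tensoring with $M_\fp$, such a block can be absorbed into the complement $1-p_C$ of a local $\mathcal I'$-approximation $C\subset D\otimes M_\fp$, yielding $TR(D\otimes M_\fp)\le 1$.
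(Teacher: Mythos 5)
Your overall strategy diverges from the paper's, and the step you yourself flag as ``the principal remaining obstacle'' is where the proposal breaks down. The paper never tries to produce an algebra with invariant $((G_0,(G_0)_+,u),G_1,T,\lambda_T)$ in one shot and then compute its tracial rank after tensoring with $M_\fp$. Instead it first builds (Theorem \ref{R3}) a $\mathcal Z$-stable algebra $A'\in{\cal A}$ whose trace simplex is $S_u(G_0)\dot\times T$ with the \emph{projection} pairing $\alpha$; there membership in ${\cal A}$ is free, because $A'=B_0\otimes B$ with $B$ as in Lemma \ref{LRa} (projections separate traces, so $TR(B\otimes M_\fp)=0$) and $B_0$ an A$\T$D-algebra carrying $T$ (in ${\cal A}$ by Theorem \ref{AD2}), and ${\cal A}$ is closed under such tensor products by Theorem 11.10 of \cite{Lnclasn}. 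The passage from the pairing $\alpha$ to the prescribed non-injective pairing $\lambda_T$ is then achieved by an inductive limit $A=\varinjlim(A',\phi)$, where $\phi$ is produced by Theorem \ref{extT} so as to induce the idempotent affine map $\beta=\iota\circ\pi$ collapsing $S_u(G_0)\dot\times T$ onto a copy of $T$; membership of the limit in ${\cal A}$ again comes from the closure properties in \cite{Lnclasn}. At no point is a tracial rank estimate made on an algebra whose projections fail to separate traces.

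By contrast, your plan requires exactly such an estimate: you must show $TR(D\otimes M_\fp)\le 1$ for a simple ASH algebra $D$ in which distinct extreme traces pair identically with all projections. You correctly observe that the argument of Lemma \ref{LRa} (via \S 8.2 of \cite{Winter-Z}) is unavailable, but the proposed remedy --- ``arrange the building blocks of $D$ so that the extra simplex directions are carried by one-dimensional spectra of small trace, then absorb them into $1-p$'' --- is not an argument; it is a description of the conclusion. To make it precise you would need (i) control over the inductive limit structure of the algebra delivered by \cite{point-line}, which the existence theorem does not give you in the required form, and (ii) a tracial approximation theorem for general simple point--line inductive limits with non-injective pairing, which is a result of at least the same depth as Theorem \ref{AD1} (itself requiring the classification theorem of \cite{Myg-CJM} even in the special case of A$\T$D-algebras). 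The issue is not that small-trace blocks cannot be absorbed --- that part is routine given strict comparison --- but that you must first exhibit, inside $D\otimes M_\fp$, large ${\cal I}'$-subalgebras that almost contain a given finite set, and nothing in the proposal produces them. This is a genuine gap, and it is precisely the difficulty the paper's $S_u(G_0)\dot\times T$ construction is designed to avoid.
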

\begin{proof}
Denote by $e_1, e_2, ..., e_n$ the extreme points of $S_u(G_0)$, and denote by $S_1,...,S_n$ the preimage of
$e_1,..., e_n$ under $\lambda$. Then, each $S_i$ is a face of $T$, and hence a simplex with finitely many
 extreme points. In each $S_i$, choose an extreme point $f_i$.

Set an affine map $\alpha: S_u(G_0)\dot{\times}T\to S_u(G_0)$ by
$$\alpha((e_i, g_j))=e_i,$$ where $e_i$ is an extreme point of
$S_u(G_0)$ and $g_j$ is an extreme point of $T$.
Define an affine map $\pi: S_u(G_0)\dot{\times} T\to T$ by
$\pi((e_i, g_j))=g_j$ if $g_j \in S_k$, and $\pi(e_i, g_j)=f_i$ if
$g_j$ is not in any of $S_k.$ Since there are only finitely many extreme points
in both $S_u(G_0)$ and $T,$ $\pi$ is a continuous affine surjective
map.

Then
$$\lambda_T\circ\pi=\alpha.$$
Choose a lifting $\iota: T\to S_u(G_0)\dot{\times}T$ of $\pi$ by
$\imath (g_j)=(\lambda_T(g_j), g_j)$ for $g_j\in S_j,$
$j=1,2,...,n.$ In particular, $\pi\circ\iota=\mathrm{id}_{T}.$
Define an affine map $\beta: S_u(G_0)\dot{\times}T\to
S_u(G_0)\dot{\times}T$ by $\beta=\iota\circ\pi$.

By Theorem \ref{R3}, there is a $\mathcal Z$-stable ASH-algebra
$A'\in\mathcal{A}$ with $$\mathrm{Ell}(A')=(G_0, G_1,
S_u(G_0)\dot{\times}T, \alpha).$$ By Theorem \ref{extT}, there is a
unital homomorphism $\phi: A'\to A'$ such that
$$[\phi]_0=\mathrm{id},\quad[\phi]_1=\mathrm{id},\quad\textrm{and}\quad(\phi)^{\sharp}=\beta.$$ (The compatibility between the map $\beta$ and $[\phi]_0$ follows from the commutative diagram below.)
Let  $A_n=A'$ and let $\phi_n: A_n\to A_{n+1}$ be defined by
$\phi_n=\phi.$ $n=1,2,....$  Put $A=\lim_{n\to\infty}(A_n,\phi_n).$
Since each $A_n$ is simple so is $A.$ By Theorem 11.10 of \cite{Lnclasn}, $A\in {\cal A}.$ Since each $A_n$ is an ASH-algebra, the \CA\ $A$ can be locally approximated by subhomogeneous \CA s.

Since the diagram
\begin{displaymath}
\xymatrix{
S_u(G_0)\dot{\times} T\ar[dd]_{\alpha} & & S_u(G_0)\dot{\times}T \ar[ll]_{\beta}
\ar[dl]_{\pi}\ar[dd]^{\alpha}\\
&T\ar[ul]_{\iota}\ar[dl]_{\lambda_{T}}\ar[dr]^{\lambda_T}&\\
S_u(G_0)\ar@{=}[rr] & & S_u(G_0)
}
\end{displaymath}
commutes (the left triangle commutes because
$\alpha\circ\iota=\lambda_T\circ\pi\circ\iota=\lambda_T\circ\mathrm{id}_T=\lambda_T$),
one has that the inductive limit $A=\varinjlim(A', \phi)$ satisfies
$$\mathrm{Ell}(A)=(G_0, G_1, T, \lambda_T),$$ as desired.
\end{proof}

\begin{lem}\label{decomp}
Let $G$ be a countable rationally Riesz group and let $T$ be a
metrizable Choquet simplex. Let $\lambda: T\to S_u(G)$ be a
surjective affine map preserving extreme points. Then, there are
decompositions $G=\varinjlim(G_n, \psi_n)$,
$\mathrm{Aff}T=\varinjlim(\mathbb R^{k_n}, \eta_n)$, and maps
$\lambda_n: G_n\to\mathbb R^{k_n}$ such that each $S_u(G_n)$ is a
simplex with finitely many extreme points, and the following diagram
commutes:
\begin{displaymath}
\xymatrix{
\mathbb R^{k_n}\ar[r]^{\eta_n} & \mathbb R^{k_{n+1}}\ar[r]&\cdots\ar[r] & \mathrm{Aff}T\\
G_n\ar[r]_{\psi_n}\ar[u]_{\lambda_n} & G_{n+1}\ar[u]_{\lambda_{n+1}}\ar[r]&\cdots\ar[r] & G\ar[u]_{\lambda^*}
}.
\end{displaymath}
\end{lem}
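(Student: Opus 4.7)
The plan is two-phase: first realize $T$ as an inverse limit of finite-dimensional simplices to produce the $\mathbb R^{k_n}$ decomposition of $\mathrm{Aff}(T)$, and then use an enumeration of $G$ to build compatible finitely generated ordered approximations $G_n$ with $S_u(G_n)$ a simplex of $k_n$ extreme points.

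For the first phase, I would invoke the standard Bauer-type representation of a metrizable Choquet simplex. Choose a countable dense sequence $\{f_j\}\subset \mathrm{Aff}(T)$ and, at stage $n$, let $V_n\subset \mathrm{Aff}(T)$ be a finite-dimensional subspace containing $1_T, f_1,\dots,f_n$, enlarged if necessary so that $V_n=\mathrm{Aff}(T_n)$ for a genuine finite-dimensional simplex quotient $T_n$ of $T$ with $k_n$ extreme points. The inclusions $V_n\hookrightarrow V_{n+1}$ become $\eta_n\colon \mathbb R^{k_n}\hookrightarrow \mathbb R^{k_{n+1}}$, and $\mathrm{Aff}(T)$ is recovered as the norm-closure of $\bigcup_n V_n$, giving $\mathrm{Aff}(T)=\varinjlim(\mathbb R^{k_n},\eta_n)$ in the Banach-space sense.

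For the second phase, enumerate $G=\{g_1,g_2,\dots\}$ with $g_1=u$. While constructing the $V_n$'s, further enlarge each $V_n$ so that $\lambda^*(g_i)\in V_n$ for all $i\le n$, which is possible since $\{\lambda^*(g_i)\}$ is countable and such enlargements preserve the finite-simplex-quotient property. Let $G_n$ be the subgroup of $G$ generated by $g_1,\dots,g_n$ (together with auxiliary elements described below), set $\lambda_n\colon G_n\to \mathbb R^{k_n}\cong V_n$ as the restriction of $\lambda^*$, and declare $(G_n)_+$ to be $\{0\}$ together with those $g\in G_n$ whose image $\lambda_n(g)$ is strictly positive at every extreme point of $T_n$. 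Then $S_u(G_n)\cong T_n$ is a simplex with $k_n$ extreme points, the connecting maps $\psi_n\colon G_n\hookrightarrow G_{n+1}$ are order-preserving inclusions, and the square of the diagram commutes tautologically.

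The delicate point---and the main obstacle---is to verify $G\cong \varinjlim(G_n,\psi_n)$ in the category of ordered abelian groups with unit. At the level of underlying abelian groups the identification is immediate since $G=\bigcup_n G_n$, but a positive element $g\in G_+$ may satisfy $\lambda^*(g)\ge 0$ only weakly---vanishing on part of $\partial_e T$---so $g$ need not lie in $(G_n)_+$ for any $n$ under the strict-positivity definition. The rationally Riesz hypothesis is exactly what is needed here, via Proposition \ref{PRi}(4): for every $g\in G_+$ and every level $n$ one obtains a rational interpolant $z\in G$ with $\lambda^*(z)$ strictly positive on $T_n$ and an integer relation $mg\le nz$ holding on $S_u(G)$. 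Including these interpolants as auxiliary generators of $G_n$, interleaved with the enumeration of $G$, guarantees $G_+=\bigcup_n \psi_{n,\infty}((G_n)_+)$ in the limit and completes the identification by a standard diagonal book-keeping argument.
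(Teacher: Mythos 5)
Your reduction of $\mathrm{Aff}(T)$ to finite-dimensional pieces is fine, but there is a genuine gap on the group side: your definition of $(G_n)_+$ does not make $S_u(G_n)$ a simplex. Since $\lambda^*(g)$ is strictly positive on $T_n$ exactly when $\rho(g)>0$ on $S_u(G)=\lambda(T)$, your order on $G_n$ is just the restricted order $G_+\cap G_n$, and the resulting state space $S_u(G_n)$ is the closed convex hull of $\{s|_{G_n}: s\in S_u(G)\}$, i.e.\ the image of the simplex $S_u(G)$ under the restriction map $(G\otimes\R)^*\to (G_n\otimes\R)^*$. The affine image of a simplex need not be a simplex. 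Concretely, take $G\subset \R^4$ a countable dense subgroup with $u=(1,1,1,1)$ and the strict coordinatewise order, so $S_u(G)$ is the standard $3$-simplex; if your enumeration begins with $u$, $v=(0,1,1,0)$, $w=(0,0,1,1)$, then the four coordinate states restrict to the vertices $(0,0),(1,0),(1,1),(0,1)$ of a square in the normalized dual of $\Z u+\Z v+\Z w$, so $S_u(G_3)$ is a square. This destroys the induction, since the subsequent steps (Lemma \ref{tracecl} via Proposition \ref{PRi2}) require each $S_u(G_n)$ to be a finite simplex. Nothing in your auxiliary-generator scheme repairs this, because you never explain how to enlarge a finitely generated subgroup so that the restricted states become affinely independent. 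This is precisely where the paper uses the rationally Riesz hypothesis in an essential and different way: it passes to the Riesz group $H=G\otimes\Q$, applies the Effros--Handelman--Shen theorem to write $H=\varinjlim(\Z^{m_i},\phi_i)$ with the \emph{usual} (simplicial) order, and pulls $G/\mathrm{Tor}(G)$ back to full subgroups $G_i'\subseteq\Z^{m_i}$; fullness (every $h\in\Z^{m_i}$ has $qh\in G_i'$) forces $S_u(G_i')=S_u(\Z^{m_i})$, which is a standard simplex. The torsion is then reattached as $G_i=T_i\oplus G_i'$.

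A secondary remark: the ``delicate point'' you single out is not actually an obstacle. In a simple ordered group every nonzero positive element is an order unit, hence bounded below on the compact state space, so $g\in G_+\setminus\{0\}$ is automatically strictly positive on all of $T$ and lies in your $(G_n)_+$ as soon as $g\in G_n$; the interpolation via Proposition \ref{PRi}(4) is unnecessary for that purpose and does not address the real difficulty above. You also gloss over torsion: a finitely generated subgroup of $G$ containing $g_1,\dots,g_n$ is not in general one of the paper's $G_i$, and the compatible splittings $G_i=T_i\oplus G_i'$ require the lifting argument given in the paper.
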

\begin{proof}
Consider the ordered group $H=G\otimes \Q$. It is clear that the ordered group $G_\mathrm{F}:=G/\mathrm{Tor}(G)$ (with positive cone the image of the positive cone of $G$) is a sub-ordered-group of $H$. Since $G$ is a rationally Riesz group, the group $H$ is a Riesz group. It follows from Effros-Handelman-Shen Theorem that there is a decomposition $H=\varinjlim(H_i, \phi_i)$, where $H_i=\mathbb Z^{m_i}$ with the usual order for some natural number $m_i$. We may assume that the images of $(\phi_{i, \infty})$ is increasing in $H$. Set
$$G'_i=\phi_{i, \infty}^{-1}(G_\mathrm{F}\cap\phi_{i, \infty}(H_i))\subseteq H_i.$$
Then the inductive limit decomposition of $H$ induces an inductive limit decomposition
$$G_\mathrm{F}=\varinjlim(G'_i, \phi_i|_{G'_i}).$$

We assert that $S_u(G'_i)=S_u(H_i)$. To show the assertion, it is enough to show that any state on $H_i$ is determined by its restriction to $G'_i$, and it is enough to show that two states of $H_i$ are same if their restrictions to $G'_i$ are same. Indeed, let $\tau_1$ and $\tau_2$ be two states of $H_i$ with same restrictions to $G'_i$. For any element $h\in H_i$, consider $\phi_{i, \infty}(h)\in H$. We then can write
$$\phi_{i, \infty}(h)=\frac{p}{q}g,$$
 for some $g\in G_{\mathrm F}$ and relatively prime numbers $p$ and $q$. In particular,
$$pg=q\phi_{i, \infty}(h)\in\phi_{i, \infty}(H_i)\cap G_\mathrm{F},$$
and hence $$qh\in G'_i.$$ Therefore, one has
$$\tau_1(h)=\frac{1}{q}\tau_1(qh)=\frac{1}{q}\tau_2(qh)=\tau_2(h).$$
This proves the assertion.

Since $S_u(H_i)$ is a finite dimensional simplex, the convex
set  $S_u(G'_i)$ is also a simplex with finitely many extreme
points. Hence we have the inductive decomposition
$G_\mathrm{F}=\varinjlim(G'_i, \psi'_i)$, where
$\psi'_i=\phi_i|_{G'_i}$.

Consider the extension
\begin{displaymath}
\xymatrix{
0\ar[r]&\mathrm{Tor}(G)\ar[r]^-\iota &G\ar[r]^-{\pi}&G_\mathrm{F}\ar[r]&0,
}
\end{displaymath}
and write $\mathrm{Tor}(G)=\varinjlim(T_i, \iota_i)$, where $T_i$
 are  finite abelian groups. Since the torsion free abelian
group $G'_1$ is finitely generated, there is a lifting $\gamma_1:
G'_1\to G$ with $\pi\circ\gamma_1(g)=\psi'_{i, \infty}(g)$ for any
$g\in G'_1$. Since an element in $G$ is positive if and only if it
is positive in the quotient $G_\mathrm{F}$, it is clear that
$\gamma_1$ is positive. Consider the ordered group $T_1\oplus G'_1$
with the order determined by $G'_1$, and set the map $\psi_{1,
\infty}: T_1\oplus G'_1 \to G$ by $(a, b)\mapsto
\iota(a)+\gamma_1(b)$. It is clear that $\psi_{1, \infty}$ is
positive.

Using the same argument, one has a positive lifting
$\gamma_2: G'_2\to G$. Since $\psi'_{i, \infty}(G'_1)\subseteq \psi'_{2, \infty}(G'_2)$, one has that $\gamma_2(g)-\gamma_1(g)\in\mathrm{Tor}(G)$ for each $g\in G'_1$. By truncating the sequence $(T_i)$, one may assume that $(\gamma_2-\gamma_1)(G'_1)\in T_2$. Define the map $\psi_{2, \infty}:  T_2\oplus G'_2 \to G$ by $(a, b)\mapsto \iota(a)+\gamma_2(b)$. Then it is clear that $\psi_{1, \infty}(T_1\oplus G'_1)\subseteq \psi_{2, \infty}(T_2\oplus G'_2)$. Define the map $\psi_{1, 2}: T_1\oplus G'_1\to T_2\oplus G'_2$ by $$(a, b)\mapsto (\iota(a)+\gamma_1(b)-\gamma_2(b), \psi'_{1, 2}(b)).$$ A direct calculation shows that $\psi_{1, \infty}=\psi_{2, \infty}\circ\psi_{1, 2}$.

Repeating this argument and setting $G_i=T_i\oplus G_i$, one has the inductive limit decomposition
$$G=\varinjlim_i(G_i, \psi_{i, i+1}).$$ Noting that the order on $G_i$ is determined by the order on
$G'_i$, one has that $S_u(G_i)=S_u(G_i')$, and hence the convex
set $S_u(G_i)$ is a simplex with finitely many extreme
points.

Let $\{a_n\}$ be a dense sequence in the positive cone $\mathrm{Aff}^+T$. Consider the map $\lambda^*\circ \psi_{i, \infty}: G\to \mathrm{Aff} T$.
Since $S_u(G_i)=S_u(H_i)$ and $H_i=\mathbb Z^{m_i}$, the image of
positive elements of $G_i$ is contained in a finite dimensional
cone. Since images of $G_i$ are increasing, we may choose $\{b_1,
..., b_i,...\}\subseteq \mathrm{Aff}T$ and natural numbers
$n_1<\cdots<n_i<\cdots$ such that $\{b_1,...,b_{n_i}\}$ is a set of
generators for the image of $G_i$ in $\mathrm{Aff}T$.

For each $i$, set $k_i=i+n_i$. We identify the affine space
$$\mathbb R^{k_i}\cong(\mathbb Ra_1\oplus\cdots\oplus\mathbb Ra_i)\oplus (\mathbb Rb_1\oplus
\cdots\oplus\mathbb Rb_{n_i})$$
as the subspace of $\mathrm{Aff}T$ spanned by $a_1,...,a_i, b_1, ..., b_{n_i}$. Define the map $\lambda_i: G_i\to\mathbb R^{k_i}$ by $$g\mapsto(0\oplus\cdots\oplus0)\oplus (\lambda^*\circ \psi_{i, \infty}(g)),$$
the map $\eta_i:\mathbb R^{k_i}\to \mathbb R^{k_{i+1}}$ by
$$(f, g)\mapsto \iota_1(f)\oplus\iota_2(g),$$
where $\iota_1$ and $\iota_2$ are the inclusions of $\mathbb Ra_1\oplus\cdots\oplus\mathbb Ra_i$ and
$\mathbb Rb_1\oplus\cdots\oplus\mathbb Rb_{n_i}$ to $\mathbb Ra_1\oplus\cdots\oplus\mathbb Ra_{i+1}$ and
$\mathbb Rb_1\oplus\cdots\oplus\mathbb Rb_{n_{i+1}}$ in $\mathrm{Aff}T$, respectively. Then, it is a
straightforward calculation that $\mathrm{Aff}T$ has the decomposition $\varinjlim(\mathbb R^{k_i}, \eta_i)$,
and the diagram in the lemma commutes.
\end{proof}

Finally, we reach the main result of this paper.

\begin{thm}\label{MT}
Let $(G_0, (G_0)_+,u)$ be a countable partially ordered
weakly unperforated and rationally Riesz group, let $G_1$ be  a countable abelian group, let
$T$ be a metrizable Choquet simplex and let $\lambda_T: T\to S_u(G_0)$ be a surjective
affine continuous map sending extremal points to extremal points. Then
there exists one (and exactly one, up to isomorphic) unital ${\cal Z}$-stable \CA\, $A\in {\cal A}$
such that
$$
\mathrm{Ell}(A)=((G_0, (G_0)_+, u), G_1, T, \lambda_T).
$$
Moreover, $A$ can be constructed to be locally approximated by subhomogeneous \CA s.
\end{thm}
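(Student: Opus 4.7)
Uniqueness follows directly from Corollary~\ref{CT0}: any two \CA s in ${\cal A}_{\cal Z}$ with isomorphic Elliott invariants are themselves isomorphic. The work is therefore entirely in the construction of some $A\in {\cal A}_{\cal Z}$ realising the prescribed data and in verifying that it is locally approximated by subhomogeneous \CA s. The plan is to write the target invariant $((G_0,(G_0)_+,u),G_1,T,\lambda_T)$ as an inductive limit of ``building-block'' invariants in which both the state space of the ordered $K_0$-group and the tracial simplex have only finitely many extreme points, so that Lemma~\ref{tracecl} realises each block, and then to lift the connecting maps of the resulting system to honest unital C*-algebra \hm s via Theorem~\ref{extT}.

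In detail: first, apply Lemma~\ref{decomp} to produce compatible decompositions $G_0=\varinjlim(G_n,\psi_n)$ and $\mathrm{Aff}(T)=\varinjlim(\R^{k_n},\eta_n)$, together with affine maps $\lambda_n\colon G_n\to \R^{k_n}$ refining $\lambda_T$. Dualising the decomposition of $\mathrm{Aff}(T)$ yields finite-dimensional simplices $T_n$ (with $k_n$ extreme points each) whose inverse limit is $T$, together with compatible surjective affine maps $\mu_n\colon T_n\to S_{u_n}(G_n)$. Second, invoke Lemma~\ref{tracecl} on the tuple $(G_n,(G_n)_+,u_n;G_1;T_n;\mu_n)$ for each $n$ to obtain a unital ${\cal Z}$-stable \CA\ $A_n\in {\cal A}$ realising this data and locally approximated by subhomogeneous \CA s. The compatibility of the system supplies an Elliott \hm\ $\Lambda_n\colon \mathrm{Ell}(A_n)\to \mathrm{Ell}(A_{n+1})$ (identity on the $K_1$-component $G_1$), which by Theorem~\ref{extT} lifts to a unital \hm\ $\phi_n\colon A_n\to A_{n+1}$ inducing it. Set $A=\varinjlim(A_n,\phi_n)$: continuity of the Elliott functor gives $\mathrm{Ell}(A)=((G_0,(G_0)_+,u),G_1,T,\lambda_T)$; ${\cal Z}$-stability and local subhomogeneity pass to inductive limits; and closure of the tracial rank $\le 1$ property under inductive limits (Proposition~3.2 of \cite{LinTAI}, as used already in the proof of Theorem~\ref{T1}) combined with Theorem~\ref{T0} places $A$ in ${\cal A}$.

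The main obstacle will be to ensure simplicity of the limit $A$, since simplicity of each $A_n$ is not automatically inherited by $\varinjlim(A_n,\phi_n)$. The standard remedy is to perturb each $\phi_n$ by a suitable inner automorphism, or if necessary to precompose with a diagonal amplification, so as to make the image of $A_n$ sufficiently diffuse in $A_{n+1}$; such modifications do not alter the class of $\phi_n$ in $\mathrm{Ell}$, so the identification of the invariant of $A$ is preserved, while preventing any proper nonzero ideal of $A$ from accumulating.

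A secondary bookkeeping task is to verify that the dualisation of Lemma~\ref{decomp} actually produces maps $\mu_n$ that send extreme points to extreme points (using the hypothesis that $\lambda_T$ does, together with the identification $S_u(G_n)\subseteq S_u(G_n\otimes\Q)$ supplied by Lemma~\ref{Rstate} in the rationally Riesz setting), and to maintain $K_1(A_n)=G_1$ with identity connecting maps throughout the system so that $K_1(A)=G_1$ in the limit.
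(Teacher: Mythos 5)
Your proposal follows the paper's proof essentially step for step: Lemma~\ref{decomp} to decompose the invariant into blocks whose trace simplices and state spaces have finitely many extreme points, Lemma~\ref{tracecl} to realise each block, Theorem~\ref{extT} to lift the connecting maps of the invariants to unital \hm s, and the classification theorem of \cite{Lnclasn} for uniqueness. The one place you diverge is your ``main obstacle'': simplicity of $\varinjlim(A_n,\phi_n)$ is automatic here, since every closed ideal of an inductive limit is the closed union of its intersections with the $A_n$ and the unital connecting maps out of the simple unital $A_n$ are injective, so no perturbation by inner automorphisms or amplification is needed (the paper makes exactly this observation inside the proof of Lemma~\ref{tracecl}); also, for membership of the limit in ${\cal A}$ the relevant reference is Theorem 11.10 of \cite{Lnclasn} rather than Proposition 3.2 of \cite{LinTAI}, which concerns matrix stability of tracial rank.
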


\begin{proof}
Note that the part of the statement about ``exactly one, up to
isomorphism" follows from \cite{Lnclasn}.

By Lemma \ref{decomp}, there exists a decomposition
\begin{displaymath}
\xymatrix{
\mathbb R^{k_n}\ar[r]^{\eta_n} & \mathbb R^{k_{n+1}}\ar[r]&\cdots\ar[r] & \mathrm{Aff}T\\
K_n\ar[r]_{\psi_n}\ar[u]_{\lambda_n} & K_{n+1}\ar[u]_{\lambda_{n+1}}\ar[r]&\cdots\ar[r] & G_0\ar[u]_{\lambda_T^*}
},
\end{displaymath}
where each $K_n$ is a rationally Riesz group with $S_u(K_n)$ having
finitely many extreme points. By Lemma \ref{tracecl}, there is a
unital $\mathcal Z$-stable algebra $A_n\in\mathcal A$ such that
$$
((K_0(A_n), K_0(A_n)_+, [1_{A_n}]), K_1(A_n),
\mathrm{Aff}(\mathrm{T}(A_n)), \lambda_{A_n})= ((K_n, (K_n)_+ , u), G_1,\mathbb
R^{k_n}, \lambda_n),$$ and each $A_n$ can be locally approximated by subhomogeneous C*-algebras.

By Theorem \ref{extT}, there are  *-homomorphisms $\phi_n: A_n\to
A_{n+1}$ such that $(\phi_n)_{*0}=\psi_n,$ $(\phi_n)_{*1}={\rm
id}_{G_1}$ and $(\psi_n)_*=\eta_n$, where $(\psi_n)_*$ is the
induced map from
$\mathrm{Aff}(\mathrm{T}(A_n))\to\mathrm{Aff}(\mathrm{T}(A_{n+1}))$.
Then the inductive limit $A=\varinjlim_n(A_n, \psi_n)$ is in the
class $\mathcal A$ and satisfies $$\mathrm{Ell}(A)=((G_0, (G_0)_+, u),
G_1, T, \lambda_T).$$ Since each $A_n$ can be locally approximated by subhomogeneous C*-algebras, so does the \CA\ $A$.
\end{proof}

\bibliographystyle{amsplain}


\providecommand{\bysame}{\leavevmode\hbox to3em{\hrulefill}\thinspace}
\providecommand{\MR}{\relax\ifhmode\unskip\space\fi MR }
\providecommand{\MRhref}[2]{%
  \href{http://www.ams.org/mathscinet-getitem?mr=#1}{#2}
}
\providecommand{\href}[2]{#2}

\end{document}